\documentclass[11pt,reqno]{amsart}

\usepackage{cite}
\usepackage[T1]{fontenc}
\usepackage{enumitem}
\usepackage{hyperref}
\hypersetup{
	colorlinks   = true,
	citecolor    = blue,
	linkcolor    = blue
}

\usepackage{amsmath,amsfonts,amsthm,color,tikz, comment, xcolor, xfrac,amssymb}
\usepackage{algpseudocode} 
\usepackage{algorithm}
\usepackage{booktabs}


  
\usepackage[normalem]{ulem}
\usepackage{bbm}
\usepackage{mathtools}
\usepackage{shuffle}
\usepackage{multirow}
\usepackage{pdfsync}
\usepackage[font={scriptsize}]{caption}
\usepackage{environ}

\usepackage[margin=1in]{geometry}
\newcommand{\E}{\mathbb E}
\newcommand{\R}{\mathbb R}
\newcommand{\N}{\mathbb N}
\newcommand{\PP}{\mathbb P}

\newcommand{\Z}{\mathbb Z}
\newcommand{\Y}{\mathbb{Y}}
\newcommand{\X}{\mathbb{X}}
\newcommand{\1}{\mathbbm 1}

\newcommand{\cf}{\mathcal F}

\newcommand{\cs}{\mathcal S}
\newcommand{\ct}{\mathcal T}

\newcommand{\al}{\alpha}

\newcommand{\ep}{\varepsilon}

\newtheorem{theorem}{Theorem}[section]

\newtheorem{corollary}[theorem]{Corollary}

\newtheorem{definition}[theorem]{Definition}

\newtheorem{assumption}[theorem]{Assumption}
\newtheorem{lemma}[theorem]{Lemma}

\newtheorem{proposition}[theorem]{Proposition}

\theoremstyle{remark}
\newtheorem{remark}[theorem]{Remark}

\theoremstyle{remark}

\theoremstyle{definition}
\newtheorem{example}[theorem]{Example}

\newcommand{\bean}{\begin{eqnarray*}}
	\newcommand{\eean}{\end{eqnarray*}}
\newcommand{\ben}{\begin{enumerate}}
	\newcommand{\een}{\end{enumerate}}
\newcommand{\beq}{\begin{equation}}
	\newcommand{\eeq}{\end{equation}}

\title[quickest detection with signatures]{Quickest Detection with Rough Path Signatures}

\author[MINGRUI WANG]{MINGRUI WANG}
\address{(M. Wang) Harold and Inge Marcus Department of Industrial and Manufacturing
	Engineering, The Pennsylvania State University, University Park, PA 16802 United States\\
}
\email{mvw5822@psu.edu}

\author[PRAKASH CHAKRABORTY]{PRAKASH CHAKRABORTY}
\address{(P. Chakraborty) Harold and Inge Marcus Department of Industrial and Manufacturing
	Engineering, The Pennsylvania State University, University Park, PA 16802 United States\\
}
\email{prakashc@psu.edu}

\thanks{P. Chakraborty acknowledge support from the National Science Foundation under grant DMS-2153915, DARPA HR0011-25-3-E012 and Korb Early Career Professorship.}

\begin{document}
		
\begin{abstract}
We develop a framework for quickest detection of distributional  change in signals modeled as rough paths. By representing the pre-change and post-change dynamics as two independent rough paths and modeling the observed signal as their concatenation at an unknown random change-point, we formulate quickest detection as an optimal stopping problem using rough paths. We show that the optimal stopping rule takes the form of the first hitting time to a half-space by a linear functional of the rough path signature, and establish that the same structural form arises independently from the intrinsic geometry of the observed path. Statistical guarantees on detection delay and false alarm probability are derived, and the framework is extended to a distributionally robust formulation in which both the pre-change and post-change models are uncertain. The proposed rules are implemented via zeroth-order stochastic approximation over truncated-signature coefficients and evaluated numerically under Brownian and fractional Brownian dynamics, achieving performance comparable to optimal methods in the Brownian setting while outperforming them and remaining robust to adversarial path perturbations in the fractional Brownian setting.
\end{abstract}
\maketitle
\section{Introduction}

\subsection{Classical quickest detection}
The quickest detection problem concerns the real-time detection of a change in the law of an observed process, with the objective of raising an alarm as quickly as possible while keeping false alarms under control. It is a central topic in sequential analysis, with classical applications in quality control, surveillance, signal processing, finance, and reliability \cite{Shiryaev2010,VeeravalliBanerjee2014,TartakovskyNikiforovBasseville2014,shiryaev2002quickest,Lai1995}. In its basic form, one observes a stochastic process whose distribution changes at an unknown disorder time $\theta$, and seeks a stopping time $\tau$, adapted to the observation filtration, that balances early detection against false alarms. Both Bayesian and non-Bayesian formulations have been studied extensively, and classical procedures include the Shiryaev, CUSUM, and Shiryaev-Roberts rules \cite{Page1954,Pollak1985,Moustakides1986,Shiryaev1996,Shiryaev2010}. A standard continuous time model is the Brownian disorder problem,
$$
dX_t = r \1_{\{t \ge \theta\}}\,dt + \sigma\,dB_t,
$$
in which the observation is pure noise before the disorder and acquires a drift after the change. The task is to infer the disorder time $\theta$ from the sample path of $X$. This model occupies a central place in the continuous time theory of quickest detection and links the subject naturally with stochastic control and optimal stopping \cite{Shiryaev1978,Shiryaev2010,PeskirShiryaev2006}.

Classical quickest detection admits two main formulations. In the Bayesian setting, the disorder time $\theta$ is random, and one typically minimizes a Bayes risk such as
$$
\inf_{\tau}\left\{\mathbb{P}(\tau<\theta)+c\,\mathbb{E}\big[(\tau-\theta)^+\big]\right\}.
$$
One may also consider constrained Bayesian formulations in which the detection delay is minimized subject to an upper bound on the false alarm probability. In the classical Bayesian model, the Shiryaev procedure is optimal \cite{Shiryaev1963,TartakovskyVeeravalli2005}.

In the non-Bayesian setting, the change-point is treated as an unknown deterministic time. Detection delay is measured in a worst case sense, while false alarms are controlled through quantities such as the mean time to false alarm. A classical benchmark is Lorden's criterion,
$$
\inf_{\tau}\ \sup_{t}\ \operatorname*{ess\,sup}\,\mathbb{E}_t\!\left[(\tau-t)^+\mid \mathcal{F}_{t}\right]\qquad\text{subject to}\qquad\mathbb{E}_{\infty}[\tau]\ge \gamma.
$$
Within this minimax framework, the CUSUM procedure is optimal for Lorden's problem \cite{Lorden1971,Moustakides1986,Shiryaev1996}. Closely related formulations, such as Pollak's criterion, lead to the Shiryaev-Roberts procedure and its variants \cite{Pollak1985,VeeravalliBanerjee2014}.

A defining feature of the classical theory is that the original path-space problem can often be reduced to a low dimensional statistic. In Bayesian diffusion models, one works with posterior or odds-ratio processes; in likelihood-ratio based minimax procedures, one recursively updates a statistic built from the likelihood ratio. Consequently, the quickest detection problem becomes an optimal stopping problem for a tractable, Markovian state process. This dimension reduction is one of the primary reasons for the success and mathematical elegance of the classical theory.  

\subsection{Challenges of non-Markovianity and path irregularity}
Many modern signals of practical interest, however, evade description by classical Markovian or semimartingale models. In practice, the observed dynamics may heavily depend on the past, exhibit long memory, or be driven by highly oscillatory and irregular inputs \cite{Mandelbrot1968,Gatheral2018,Bender2011,Bennedsen2021}. In such situations, the current state $X_t$ no longer summarizes all relevant information, and a finite dimensional sufficient statistic may no longer be available. The continuation value naturally becomes a functional of the entire historical path $X|_{[0,t]}$, rather than the current state alone. This introduces the precise analytical difficulties inherent to non-Markovian optimal stopping.

There is a second difficulty of an analytical nature. Much of classical stochastic modeling relies on integration theories suited either to sufficiently regular paths or to semimartingales. However, when the observed path is highly irregular and falls outside the semimartingale class, classical integration breaks down, and even the formulation of a differential equation driven by the path becomes problematic. 

Rough path theory \cite{Lyons1998,LyonsQian2002,Lyons2007,friz-victoir,Friz2020,RoughPathsNote} addresses these difficulties  simultaneously by providing a deterministic, pathwise integration theory for irregular signals and, through the signature, a universal feature map of the path history that serves as a substitute for the 
finite-dimensional sufficient statistics of the 
classical theory.

\subsection{Rough path  signatures for quickest detection}

To execute quickest detection in complex, modern signal environments, one requires a probabilistically flexible and analytically rigorous framework for non-Markovian, nonstationary dynamics driven by irregular paths.
Rough path theory and signature methods provides such a framework. Elsewhere, its applications span rough stochastic optimal control \cite{DiehlFrizGassiat2017,chakraborty2024,ashkarian2026}, asset pricing under rough volatility \cite{BayerFrizGatheral2016}, as universal non-Markovian feature extractors in statistical learning \cite{Chevyrev2025,KidgerEtAl2020}, and as robust covariates in functional linear regression \cite{Fermanian2022, Fermanian2021}.
In addition, recent works on non-Markovian optimal stopping demonstrate that intractable, path-dependent continuation values can be efficiently approximated by threshold rules on linear functionals of the truncated rough path signature \cite{Bayer2023,BayerPelizzariSchoenmakers2025}.
Since quickest detection is fundamentally an optimal stopping problem, we propose replacing classical likelihood-ratio statistics with such stopping rules built from signature features, namely rules of the form
\begin{equation*}%\label{eq:tau-intro}
\tau_{l}=\inf\Big\{t\ge 0:\big|\langle l,\mathbb{X}^{< \infty}_{0,t}\rangle\big|\ge 1\Big\}\wedge T.
\end{equation*}
Unlike classical procedures such as CUSUM and Shiryaev, whose sufficient statistics are derived under explicit parametric and Markov assumptions, the proposed rule encodes the entire observed history through the rough path signature. This flexible alternative to the classical finite-dimensional sufficient statistics motivates our formulation of quickest detection on rough-path space.

\subsection{Adversarial (robust) quickest detection}
In modern machine learning, models are notoriously vulnerable to adversarial attacks, where small, targeted perturbations can induce severe failures \cite{kurakin2016,madry2017towards,Pelekis2025}. This fragility extends naturally to sequential decision-making: because true data-generating distributions are rarely known exactly, adversarial corruption and model misspecification can severely degrade quickest detection performance. 

Robust quickest detection traditionally addresses this issue by optimizing worst-case performance over prescribed uncertainty classes, either through least favorable distributions in the classical minimax framework \cite{Unnikrishnan2011} or through data-driven approaches based on Wasserstein ambiguity sets and score-based divergences \cite{VeeravalliBanerjee2014,Wu2023,Xie2024,Moushegian2024}. In this paper, we adopt a minimax formulation over uncertainty classes defined intrinsically on rough-path space and propose a signature-based method, accommodating adversarial perturbations under irregular and non-Markovian noise.

\subsection{Contribution} 
Motivated by these challenges, we develop a novel framework for quickest detection on rough path space. The main contributions are as follows.
\begin{enumerate}[wide, labelwidth=!, labelindent=0pt, label=(\roman*)]
\item \emph{Optimal stopping on rough path space.} 
We model the observed signal as the concatenation of two geometric $p$-rough paths at an unknown change-point and explore its properties (Lemma~\ref{lem:geo-rp}-\ref{lem:d-p-var}).  %formulate quickest detection as an optimal stopping problem.
We use the signature half-space hitting time as the optimal stopping rule (Proposition~\ref{thm:signature-stopping}),  extending the classical theory to non-Markovian and path-irregular settings.

\item \emph{Geometric detector.} %Independently of the payoff structure, 
We construct a detector from the intrinsic geometry of the observed path, % via the homogeneous $p$-variation distance. 
and show that this detector takes the same structural form as the optimal stopping rule. We derive explicit statistical guarantees on detection delay and false alarm probability 
(Propositions~\ref{prop:l1-X-de-ep}-\ref{prop:stat_1}).

\item \emph{Repeated experiments.} When independent replications share a common change point, we show that aggregation reduces both detection delay and false alarm probability exponentially in the number of replications (Proposition~\ref{prop:stat_2}).

\item \emph{Adversarial or distributionally robust detection.} 
We formulate a minimax extension over uncertainty classes on rough-path space, including Wasserstein ambiguity sets and total-variation budgets for adversarial perturbations (Section~\ref{sec:adv}).

\item \emph{Numerical implementation.} 
We implement the proposed stopping rules by applying zeroth-order optimization to truncated-signature coefficients and evaluate them under Brownian and fractional Brownian dynamics. The signature-based rules perform comparably to CUSUM and Shiryaev in the Brownian setting, outperform them in the fractional Brownian setting, and exhibit robustness to adversarial path perturbations (Section~\ref{sec:numeric}).
\end{enumerate}

\subsection{Structure of the paper}

Section~\ref{sec:preliminaries} reviews the necessary rough path preliminaries. Section~\ref{sec:QD-rough} formulates quickest detection on rough path space, develops its properties and presents Brownian and fractional Brownian examples. Section~\ref{sec:detector} develops the geometric detector and derives its statistical guarantees, while Section~\ref{sec:adv} treats adversarial quickest detection. Section~\ref{sec:numeric} presents the numerical implementation and experiments, and Section~\ref{sec:conclusion} concludes.

\section{Preliminaries}\label{sec:preliminaries}

The signature of a path takes values in an algebraic structure called the tensor algebra. We provide details in Appendix~\ref{sec:tensor} together with the associated Lie algebra, which captures the antisymmetric part of iterated integrals and underlies the group structure of signatures. Appendix~\ref{sec:shuffle} provides notions of shuffle product and group-like elements, which are needed to evaluate signatures against linear functionals and to exploit their multiplicative structure. In this section, we define geometric $p$-rough paths and their signatures, and state a density result for signature-based functionals that is the key technical tool behind our main results.

\subsection{Signatures and geometric $p$-rough paths}\label{sec:geometric-p}
We now define the signature of a path and extend the construction to the rough path setting, where the driving signal is too irregular for classical integration.

Let $N\in\N$. For a continuous path $x:[0,T]\to\R^d$ of bounded variation and $(s,t)\in\Delta_T:=\{(s,t)\in[0,T]^2:s\le t\}$, the \emph{truncated signature} of order $N$ of $x$ over $[s,t]$ is defined by
$$
S_N(x)_{s,t}:=\left(1,\int_s^t dx_u,\int_{s<u_1<u_2<t} dx_{u_1}\otimes dx_{u_2},
\dots,\int_{s<u_1<\cdots<u_N<t} dx_{u_1}\otimes\cdots\otimes dx_{u_N}\right).
$$
Equivalently, for each $k=1,\dots,N$,
$$
\pi_k\bigl(S_N(x)_{s,t}\bigr)=\int_{s<u_1<\cdots<u_k<t} dx_{u_1}\otimes\cdots\otimes dx_{u_k}\in (\R^d)^{\otimes k}.
$$
The \emph{full signature} is defined by
$$
S(x)_{s,t}:=\bigl(1,\pi_1(S(x)_{s,t}),\pi_2(S(x)_{s,t}),\dots\bigr).
$$
For paths of bounded variation, the signature is multiplicative: for all $0\le s\le u\le t\le T$,
$$
S_N(x)_{s,t}=S_N(x)_{s,u}\otimes S_N(x)_{u,t}.
$$
In particular, $S_N(x)_{s,t}\in G^N(\R^d)$ for every $(s,t)\in\Delta_T$ (cf.\ Appendix~\ref{sec:shuffle} for $G^N(\R^d)$).
This multiplicativity, known as Chen's identity, is the fundamental concatenation property of signatures and will be used repeatedly in
the sequel. In particular, Chen's identity shows that the signature accumulates information about the path in a multiplicative, path-consistent
manner, a structure we will exploit directly when concatenating pre- and post-change rough paths in Section~\ref{sec:QD-rough}.

Before introducing geometric rough paths, we recall the metric structure on $G^N(\R^d)$. For $g=(1,g_1,\dots,g_N)\in G^N(\R^d)$ and $\lambda\in\R$, define the dilation map by
$$
	\delta_\lambda(g):=(1,\lambda g_1,\dots,\lambda^N g_N).
$$
A \emph{homogeneous norm} on $G^N(\R^d)$ is a continuous map
$$
	\|\cdot\|:G^N(\R^d)\to [0,\infty)
$$
such that
\begin{enumerate}
		\item $\|g\|=0$ if and only if $g=\mathbf{1}$,
		\item $\|\delta_\lambda g\|=|\lambda|\,\|g\|$ for all $\lambda\in\R$ and $g\in G^N(\R^d)$.
\end{enumerate}
A homogeneous norm is called \emph{symmetric} if $\|g\|=\|g^{-1}\|$ for all $g\in G^N(\R^d)$, and \emph{subadditive} if
$$
	\|g\otimes h\|\le \|g\|+\|h\|,\qquad g,h\in G^N(\R^d).
$$
\begin{definition}
The \emph{Carnot--Carath\'eodory norm}, denoted by $\|\cdot\|_{CC}$, is defined for $g\in G^N(\R^d)$ by
	$$
	\|g\|_{CC}:=\inf\left\{\int_0^1 |d\gamma|:\;\gamma\in C^{1\text{-var}}([0,1],\R^d)\text{ and }S_N(\gamma)_{0,1}=g\right\},
	$$
	where $C^{1\text{-var}}([0,1],\R^d)$ denotes the space of continuous bounded variation paths.
\end{definition}
By \cite[Theorem 7.32]{friz-victoir}, the Carnot--Carath\'eodory norm is well defined. Moreover, by \cite[Proposition 7.40]{friz-victoir}, it is a symmetric, subadditive, homogeneous norm. This induces the \emph{Carnot--Carath\'eodory metric}
$$
d_{CC}(g,h):=\|g^{-1}\otimes h\|_{CC},
\qquad g,h\in G^N(\R^d).
$$
By \cite[Proposition 7.36]{friz-victoir}, the metric $d_{CC}$ is continuous and left-invariant, that is,
$$
d_{CC}(g\otimes h,g\otimes k)=d_{CC}(h,k),
\qquad g,h,k\in G^N(\R^d).
$$

We now extend this framework to rough paths of finite $p$-variation. Recall the simplex $\Delta_T$ defined above. Let $\X:[0,T]\to G^N(\R^d)$ be a continuous path with $\X_0=\mathbf{1}$. For $0\le s\le t\le T$, define its increment by
$$
\X_{s,t}:=\X_s^{-1}\otimes \X_t.
$$
Then $\X_{t,t}=\mathbf{1},\,\X_{s,u}\otimes \X_{u,t}=\X_{s,t}$ for $0\le s\le u\le t\le T$. That is, $\X$ canonically induces a multiplicative map on $\Delta_T$. Following \cite[Definition 8.1]{friz-victoir}, we now introduce the homogeneous $p$-variation and $1/p$-H\"older distances.

\begin{definition}\label{def:d-p-var-CC}
Let $\X,\Y:[0,T]\to G^N(\R^d)$ be continuous paths with $\X_0=\Y_0=\mathbf{1}$. For $0\le s<t\le T$, define
$$
d_{p\text{-var};[s,t]}(\X,\Y):=\left(\sup_{\mathcal{D}\subset[s,t]}\sum_i d_{CC}\bigl(\X_{t_i,t_{i+1}},\Y_{t_i,t_{i+1}}\bigr)^p\right)^{1/p},
$$
where the supremum is taken over all partitions	$\mathcal{D}=\{s=t_0<\cdots<t_m=t\}$ of $[s,t]$. We call $d_{p\text{-var};[s,t]}$ the \emph{homogeneous $p$-variation distance}. We also define the \emph{homogeneous $p$-variation} of $\X$ by
$$
\|\X\|_{p\text{-var};[0,T]}:=\left(\sup_{\mathcal D\subset [0,T]}\sum_i \|\X_{t_i,t_{i+1}}\|_{CC}^p\right)^{1/p}.
$$
Similarly, the \emph{homogeneous $1/p$-H\"older distance} between $\X$ and $\Y$ is defined by
$$
d_{1/p\text{-H\"ol};[0,T]}(\X,\Y):=\sup_{0\le s<t\le T}\frac{d_{CC}\bigl(\X_{s,t},\Y_{s,t}\bigr)}{|t-s|^{1/p}},
$$
and the \emph{homogeneous $1/p$-H\"older norm} of $\X$ is defined by
$$
\|\X\|_{1/p\text{-H\"ol};[0,T]}:=\sup_{0\le s<t\le T}\frac{\|\X_{s,t}\|_{CC}}{|t-s|^{1/p}}.
$$
\end{definition}

We now define geometric $p$-rough paths.

\begin{definition}
	Let $p\ge1$ and set $N:=\lfloor p\rfloor$. A \emph{geometric $p$-rough path} is a continuous path
	$$
	\X:[0,T]\to G^N(\R^d)
	$$
	with $\X_0=\mathbf{1}$ and finite homogeneous $p$-variation such that there exists a sequence of bounded variation paths $(x^m)_{m\ge1}$ in $\R^d$ satisfying
	$$
	d_{p\text{-var};[0,T]}\bigl(S_N(x^m),\X\bigr)\to0\qquad\text{as }m\to\infty.
	$$
\end{definition}
The space of geometric $p$-rough paths is denoted by $\Omega^p_T$ (also written as $C^{p\text{-var}}([0,T],G^N(\R^d))$ in some references). For a geometric $p$-rough path $\X$, we write
$$
\X_{s,t}=\bigl(1,X_{s,t},\pi_2(\X_{s,t}),\dots,\pi_N(\X_{s,t})\bigr),
$$
and refer to $X$ as the path trajectory (first level) of $\X$, where $X_{s,t}=\pi_1(\X_s^{-1}\otimes\X_t)=X_t-X_s$ is the increment of the trajectory.  

By Lyons' extension theorem \cite[Theorem 3.7]{Lyons2007}, every geometric $p$-rough path
$\X\in \Omega_T^p$ has a unique lift $\X^{<\infty}$ with values in
$G\bigl((\R^d)\bigr)$ (cf.\ Appendix~\ref{sec:shuffle} for $G(\R^d)$), such that for each $N\ge 1$,
$$
\|\pi_{\le N}(\X^{<\infty})\|_{p\text{-var};[0,T]}<\infty,
\qquad\text{and}\qquad
\pi_{\le \lfloor p\rfloor}(\X^{<\infty})=\X.
$$
This lifted path is called the \emph{signature} of $\X$. We adopt the shorthand
$$
\X^{<\infty}_{0,t}:=\X^{<\infty}\big|_{[0,t]},
\qquad
\X^{\le N}:=\pi_{\le N}(\X^{<\infty}),
$$
and refer to $\X^{\le N}$ as the \emph{truncated signature} of order $N$ of $\X$.
Note that for a bounded-variation path $x$ the signature $S(x)_{0,t}$ coincides with
$\X^{<\infty}_{0,t}$ when $\X = S_N(x)$ is viewed as the canonical rough path lift. We define $\widehat{\Omega}_T^p$ to be the closure, in the homogeneous $p$-variation distance, of the rough path lifts $\widehat{\X}^{\le \lfloor p\rfloor}$ corresponding to bounded variation paths
$$
\widehat{X}_t=(t,X_t)\in\R^{1+d}.
$$
Time augmentation ensures that the signature encodes not only the geometric trajectory but also the timing of events, which is essential for the optimal stopping problems studied in Sections~\ref{sec:QD-rough}--\ref{sec:detector}. An element of $\widehat{\Omega}_T^p$ is called a \emph{time-augmented geometric $p$-rough path} and is denoted $\widehat{\X}$. By Lyons' extension theorem, $\widehat{\X}$ admits a unique full signature $\widehat{\X}^{<\infty}_{0,t}\in G((\R^{1+d}))$. 

\subsection{Approximation by signature functionals}\label{sec:approx}
The key technical tool underlying our main results is a density property for signature-based functionals: any continuous stopping policy can be approximated, uniformly on a compact set of probability one, by a linear functional of the signature. This approximation theorem allows us to reduce the optimal stopping problem over all adapted stopping times to the much more tractable class of signature hitting times.

\begin{definition}
	We set $\Lambda_T:=\bigcup_{t \in [0,T]} \hat{{\Omega}}_t^p$ and $\mathcal{T} \coloneqq C(\Lambda_T,\R)$. We further define the space $\mathcal{T}_{\mathrm{sig}} \subset \mathcal{T}$ as
	\begin{align*}
		\mathcal{T}_{\mathrm{sig}} = \left\{ \phi \in \mathcal{T}\, :\, \exists l \in T((\R^{1+d})^*) \text{ such that } \phi(\widehat{\X}|_{[0,t]}) = \langle l,\widehat{\X}^{< \infty}_{0,t} \rangle \ \text{ for all } \widehat{\X}|_{[0,t]} \in\Lambda_T \right\}.
	\end{align*}
\end{definition}

\begin{remark}
Note that every $l \in T((\R^{1+d})^*)$ defines a $\phi_l \in \mathcal{T}$ by setting $\phi_l(\widehat{\X}|_{[0,t]}) \coloneqq \langle l,\widehat{\X}^{< \infty}_{0,t} \rangle$.
\end{remark}

As a consequence of the Stone--Weierstrass theorem, $\mathcal{T}_{\mathrm{sig}}$ is dense in $\mathcal{T}$. The following strengthening, which provides uniform approximation on a set of high probability, can be found in \cite[Lemma~5.2]{Bayer2023} or \cite[Lemma~B.3]{doi:10.1137/19M1259778}.
\begin{lemma}\label{lem:sig_dense}
	Let $\PP$ be a probability measure on $(\hat{\Omega}_T^p, \mathcal{B}(\hat{\Omega}_T^p))$. Then, for every $\varepsilon > 0$, there is a compact set $\mathcal{K} \subset \hat{\Omega}_T^p$ such that
	\begin{enumerate}
		\item $\PP(\mathcal{K}) > 1 - \varepsilon$,
		\item $\mathcal{T}_{\mathrm{sig}}$, restricted to $\mathcal{K}$, is dense in $\mathcal{T}$. More precisely, for every $\phi \in \mathcal{T}$ there is a sequence $\phi_n \in \mathcal{T}_{\mathrm{sig}}$ such that
		\begin{align*}
			\sup_{\widehat{\X} \in \mathcal{K};\ t \in [0,T]} |\phi_n(\widehat{\X}|_{[0,t]}) - \phi(\widehat{\X}|_{[0,t]}) | \to 0
		\end{align*}
		as $n \to \infty$.
	\end{enumerate}
\end{lemma}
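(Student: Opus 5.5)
The plan is to combine tightness of a single Borel probability measure on a Polish space with a Stone--Weierstrass argument on a compact set of stopped paths. First I will check that $\widehat{\Omega}_T^p$, with the homogeneous $p$-variation distance, is Polish. It is a closed subset of the space of geometric rough paths, which is separable because it is the closure of lifts of piecewise-linear paths with rational nodes, and complete by \cite{friz-victoir}. Ulam's theorem then makes $\PP$ tight. So for every $\varepsilon>0$ there is a compact $\mathcal{K}\subset\widehat{\Omega}_T^p$ with $\PP(\mathcal{K})>1-\varepsilon$, which settles item (1).

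For item (2), I form the set of stopped paths
$$\mathcal{K}_\Lambda:=\bigl\{\widehat{\X}|_{[0,t]}:\ \widehat{\X}\in\mathcal{K},\ t\in[0,T]\bigr\}\subset\Lambda_T,$$
where $\Lambda_T$ carries the usual metric for stopped paths. One choice is to compare $\widehat{\X}|_{[0,t]}$ and $\widehat{\Y}|_{[0,s]}$ by extending both constantly to $[0,T]$ and adding $|t-s|$. The map $(\widehat{\X},t)\mapsto\widehat{\X}|_{[0,t]}$ is continuous on $\mathcal{K}\times[0,T]$. Continuity in $t$ follows from the continuity of $t\mapsto\|\widehat{\X}\|_{p\text{-var};[s,t]}$ (control property). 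So $\mathcal{K}_\Lambda$ is compact, as the continuous image of a compact set.

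Next I apply the Stone--Weierstrass theorem to the restrictions of $\mathcal{T}_{\mathrm{sig}}$ to $\mathcal{K}_\Lambda$, checking four properties.
\begin{itemize}
\item Each $\phi_l$ is continuous on $\mathcal{K}_\Lambda$, because Lyons' extension map is continuous in $p$-variation \cite{Lyons2007}.
\item The family is an algebra: by the shuffle identity, $\langle l_1,\cdot\rangle\langle l_2,\cdot\rangle=\langle l_1\shuffle l_2,\cdot\rangle$ on group-like elements.
\item It contains constants, through the empty word.
\item It separates points. Because of the time coordinate, the signature of a time-augmented path determines the path uniquely; it is never tree-like.
\end{itemize}
Point separation is the step I expect to be the main obstacle. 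Two stopped paths with different terminal times $t\neq s$ are separated by the level-one time component, which equals $t$ or $s$. For equal terminal times, I will use the uniqueness theorem for signatures of geometric rough paths: the signature determines the path up to tree-like equivalence (Hambly--Lyons / Boedihardjo--Geng--Lyons--Yang). Strict monotonicity of the time component rules out tree-like pieces, so $\widehat{\X}^{<\infty}_{0,t}=\widehat{\Y}^{<\infty}_{0,t}$ forces $\widehat{\X}|_{[0,t]}=\widehat{\Y}|_{[0,t]}$.

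Stone--Weierstrass then gives that $\mathcal{T}_{\mathrm{sig}}|_{\mathcal{K}_\Lambda}$ is dense in $C(\mathcal{K}_\Lambda,\R)$ in the sup norm. Given $\phi\in\mathcal{T}$, its restriction lies in $C(\mathcal{K}_\Lambda,\R)$, so there exist $\phi_n=\phi_{l_n}$ with $\sup_{\mathcal{K}_\Lambda}|\phi_n-\phi|\to0$. This is exactly the displayed supremum over $\widehat{\X}\in\mathcal{K}$ and $t\in[0,T]$, which proves item (2).
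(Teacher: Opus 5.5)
Your proposal is correct and follows essentially the same route as the proof behind the paper's citations \cite[Lemma~5.2]{Bayer2023} and \cite[Lemma~B.3]{doi:10.1137/19M1259778}: tightness of $\PP$ on the Polish space $\widehat{\Omega}_T^p$, compactness of the stopped-path image of $\mathcal{K}\times[0,T]$, and Stone--Weierstrass via the shuffle identity, the empty word, and injectivity of the time-augmented signature. The paper gives no proof beyond the citation and leaves the topology on $\Lambda_T$ unspecified; your constant-extension-plus-$|t-s|$ metric is a legitimate choice, and every step of your argument holds under it.
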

Lemma~\ref{lem:sig_dense} will be applied in the sequel to approximate the value functionals of the optimal stopping problems, which depend on the entire observed path history rather than the current state alone, by linear functionals of the signature, 
thereby reducing these intractable functional optimization problems to finite-dimensional ones.

\section{Quickest detection with rough path signatures}\label{sec:QD-rough}
We now formalize the quickest detection problem on rough path space. The key modeling choice is to represent the pre-change and post-change dynamics as two independent geometric $p$-rough paths, and to construct the observed signal as their concatenation at the random change-point.
\subsection{Problem setup}
Recall the space $\Omega_T^p$ of geometric $p$-rough paths introduced in Section~\ref{sec:geometric-p}.
Then consider two rough stochastic processes $\X^{(1)}, \X^{(2)} \in \Omega_T^p$ with distinct laws given by $\mu_1$ and $\mu_2$ respectively. These two processes model respectively the pre and post change dynamics of our observation process. We model the transition between the two regimes by a non-negative random variable $\theta$ representing the change-point. Let $\theta$ follow the distribution $\nu$ with finite mean.
   	
We formalize our setup by constructing an appropriate product probability space. Let the sample space be $\bar{\Omega} = \Omega_T^p \times \Omega_T^p \times [0,\infty)$, with a generic element denoted by $\omega = (\mathbf{x}^{(1)}, \mathbf{x}^{(2)}, u)$. We define the probability measure $\PP$ on this space as
$$
   	\PP = \mu_1 \otimes \mu_2 \otimes \nu.
$$
By construction, under $\PP$, the processes $\X^{(1)}$ and $\X^{(2)}$ and the random variable $\theta$ are mutually independent. For completeness, we denote the background filtered probability space $(\bar{\Omega}, \mathcal{F}_t, \mathbb{P})$, where $\mathcal{F}_t := \sigma(\theta, X_{s}^i : i=1,2 \text{ and } 0 \leq s \leq t)$ to be the full filtration generated by both the latent change-point and the individual paths. 

To ensure detection and for their study using rough path formalism, we assume that the pre- and post-change processes $\X^{(1)}$ and $\X^{(2)}$ satisfy the following assumption.
\begin{assumption}\label{asm:distinct-paths}
     The stochastic processes $\X^{(1)},\X^{(2)} \in \Omega_T^p$. Furthermore, there exist $q>1,M>0$ and an increasing function $g:\R \rightarrow \R$ with $g(0)=0$ such that for any $0\leq s < t \leq T$
     \begin{align}
        &\max \left\{\E\left[\|\X^{(2)}\|_{1/p-\text{H\"{o}l};[0,T]}^q\right] , \E\left[\|\X^{(1)}\|_{1/p-\text{H\"{o}l};[0,T]}^q\right]\right\}\leq M,\quad \text{and}\label{eq:X12-q-moment}\\
        &\E\left(d_{p-\mathrm{var};[s,t]}\left(\X^{(2)},\X^{(1)}\right)\right)\geq g(|t-s|),\label{eq:X12>g}
 \end{align}
where the notations are as in Definition~\ref{def:d-p-var-CC}.
\end{assumption}	
\begin{remark}
\cite[Thm A.10]{friz-victoir} provides a class of stochastic rough paths that satisfy the moment bound assumption in \eqref{eq:X12-q-moment}.
\end{remark}
\begin{remark}
Equation \eqref{eq:X12>g} captures the dissimilarity of the two paths which is necessary to reliably distinguish between the paths. In our setting, the divergence of the two paths is captured by the homogeneous $p$-variation distance which is assumed to accumulate over time owing to \eqref{eq:X12>g} and the growth of the function $g$. This provides additional information to detect the change-point with growing time post changepoint, and is equivalent to the so called 'signal-to-noise ratio' in classical quickest detection problems.
\end{remark}

\begin{figure}[h]   
\centering

\begin{tikzpicture}[scale=0.8]
	
	% Define colors
	\definecolor{myred}{RGB}{228,26,28}
	\definecolor{myblue}{RGB}{55,126,184}
	\definecolor{mypurple}{RGB}{152,78,163} 
	% ==========================================
	% LEFT PANEL: Both full paths defined on [0, T]
	% ==========================================
	\begin{scope}[xshift=0cm]
		% Grid
		\draw[step=1cm,gray!30,very thin] (-0.5,-0.5) grid (4.5,3.5);
		
		% Axes
		\draw[->, >=stealth] (-0.5,0) -- (4.5,0) node[right] {$x_1$};
		\draw[->, >=stealth] (0,-0.5) -- (0,3.5) node[above] {$x_2$};
		
		% Ticks
		\foreach \x in {1,2,3,4} \draw (\x,-0.1) -- (\x,0.1) node[below=0.15cm] {\small $\x$};
		\foreach \y in {1,2,3} \draw (-0.1,\y) -- (0.1,\y) node[left=0.15cm] {\small $\y$};
		
		% Path 1: X^(1) Full [0, T]
		\draw[myred, thick] (0,0) .. controls (0.5, 1.5) .. (2,1) node[midway, below right] {$\X^{(1)}_{0,\theta}$};
		\draw[myred, thick] (2,1) .. controls (2.5, 0.5) .. (3, 0.5) node[right] {$\X^{(1)}_{\theta,T}$};
		
		% Junction point \theta for X^(1)
		\filldraw[myred] (2,1) circle (1.5pt) node[above right] {$\X^{(1)}_\theta$};
		
		% Path 2: X^(2) Full [0, T]
		\draw[myblue, thick] (0,1.5) .. controls (0.5, 1.5) .. (1,2.5) node[below left=0.3cm,left] {$\X^{(2)}_{0,\theta}$};
		\draw[myblue, thick] (1,2.5) .. controls (1.5, 3.5) .. (2.5, 3.5) node[right] {$\X^{(2)}_{\theta,T}$};
		
		% Junction point \theta for X^(2)
		\filldraw[myblue] (1,2.5) circle (1.5pt) node[right] {$\X^{(2)}_\theta$};
		
		% Panel Label
		\node at (2.3, -1) {The hidden paths $\X^{(1)}$ and $\X^{(2)}$};
	\end{scope}
	
	% ==========================================
	% RIGHT PANEL: Concatenated Path (\X)
	% ==========================================
	\begin{scope}[xshift=7cm]
		% Grid
		\draw[step=1cm,gray!30,very thin] (-0.5,-0.5) grid (4.5,3.5);
		
		% Axes
		\draw[->, >=stealth] (-0.5,0) -- (4.5,0) node[right] {$x_1$};
		\draw[->, >=stealth] (0,-0.5) -- (0,3.5) node[above] {$x_2$};
		
		% Ticks
		\foreach \x in {1,2,3,4} \draw (\x,-0.1) -- (\x,0.1) node[below=0.15cm] {\small $\x$};
		\foreach \y in {1,2,3} \draw (-0.1,\y) -- (0.1,\y) node[left=0.15cm] {\small $\y$};
		
		% 1. X [0, \theta] (KEPT from X^(1))
		\draw[myred, thick] (0,0) .. controls (0.5, 1.5) .. (2,1) node[midway, below right, text=black] {$\X_{0,\theta}$};
		
		% 2. X^(1) [\theta, T] (DISCARDED - Drawn faded/dashed for context)
		\draw[myred, thick, dashed, opacity=0.3] (2,1) .. controls (2.5, 0.5) .. (3, 0.5);
		
		% 3. X^(2) [0, \theta] (SHIFTED & DISCARDED - Drawn faded/dashed)
		% Original points: (0,2) to (1,3). Shift vector is (+1, -2)
		\draw[myblue, thick, dashed, opacity=0.3] (1,0) .. controls (1.5, 0) .. (2,1);
		
		% 4. X [\theta, T] (SHIFTED & KEPT from X^(2))
		% Original points: (1,3) to (2.5,4). Applied shift (+1, -2).
		\draw[myblue, thick] (2,1) .. controls (2.5, 2) .. (3.5, 2) node[right, text=black] {$\X_{\theta,T}$};
		
		% Junction point \theta for the new concatenated path \X
		\filldraw[black] (2,1) circle (1.5pt) node[right] {$\X_\theta$};
		
		% Panel Label
		\node at (2, -1) {The observed path $\X$};
		
	\end{scope}
	
\end{tikzpicture}
\caption{An example of the hidden and observed paths in $\R^2$}
	\end{figure}

\subsection{The observed process}
Having specified the probabilistic setup, we now describe the observation model. The observer has access neither to $X^{(1)}$
and $X^{(2)}$ individually nor to the change-point $\theta$, but
only to the path obtained by switching from one regime to the other at time $\theta$. In our setting of the quickest detection problem, the individual processes $\X^{(1)}$ and $\X^{(2)}$ remain hidden from the observer. Instead, one observes a process $\X$, defined to be the concatenation of $\X^{(1)}$ and $\X^{(2)}$ at the random change-point $\theta$. More precisely, for any time $t\in[0,T]$, the observed rough path is given by
\begin{equation}\label{eq:X}
	\X_{0,t}=\1_{\{t<\theta\}} \X^{(1)}_{0,t}+\1_{\{t\geq \theta\}}\left(\X^{(1)}_{0,\theta}\otimes\X^{(2)}_{\theta,t}\right).
\end{equation}
For $t \geq \theta$, the resulting signature on $[0,t]$ follows directly from Chen's identity:
$$
\pi_n(\X_{0, t})=\sum_{k=0}^n \pi_k(\X_{0, \theta}^{(1)}) \otimes \pi_{n-k}(\X_{\theta, t}^{(2)}), \quad \text{for all } n \in \{1, \dots, N \}.
$$
Specifically, for the level $1$ signature, which corresponds to the increment of the underlying path trajectory, we have:
$$
\pi_1(\X_{0,t})=\pi_1(\X_{0, \theta}^{(1)}) \cdot 1+1 \cdot \pi_1(\X_{\theta, t}^{(2)})=\pi_1(\X_{0, \theta}^{(1)})+\pi_1(\X_{\theta, t}^{(2)})=X_\theta^{(1)}+X_{t}^{(2)}-X_{\theta}^{(2)}.
$$
This demonstrates that the underlying trajectory is continuous in $t$, naturally justifying the term "concatenation". In the sequel, we will establish that the concatenated process $\X$ is indeed a geometric $p$-rough path.

\subsection{Quickest detection objective}\label{sec:obj}
With the observation model in place, we now state the two optimal stopping objectives that formalize the trade-off between detection delay and false alarm probability. Recall the notation $\hat{X}$ for the time-augmented version of the rough path $X$. Denote $\cf^X_t:=\sigma(\widehat{X}_{s}:0\leq s\leq t )$ to be the natural filtration generated by the time-augmented observed process $\widehat{\X}$. Here the process $\X$ is the observation process as defined in \eqref{eq:X} with a latent unobservable change-point $\theta$. The goal of quickest detection is a stopping time $\tau$ to signal a change-point or that regime shift has already occurred. For such a stopping time $\tau$, the expected delay is given by:
 \begin{equation*}
    	\E\left[(\tau-\theta)^{+}\right],
 \end{equation*} 
 while the false alarm probability is:
 \begin{equation*}
    	\mathbb{P}(\tau<\theta).
 \end{equation*}
 Let $\cs$ be the set of all $(\cf^X_t)$-stopping times. We now consider the following objectives:
 \begin{enumerate}[ wide,labelwidth=!, labelindent=0pt]
 	\item[{\bf Objective I}:] Seek an $(\cf^X_t)$-stopping time $\tau$ such that the following cost function is minimized
 	\begin{equation*}
 		\inf_{\tau\in\cs} \mathbb{P}(\tau<\theta)+ c\,\E\left[(\tau-\theta)^{+}\right].
 	\end{equation*}
 	Here $c$ is the cost of delay. This is precisely the objective in the Bayesian formulation of quickest detection, e.g. in \cite{Shiryaev1978, Shiryaev2010}. The corresponding stochastic process modeling the cost function in this optimal stopping problem is
 	\begin{equation}\label{eq:Y1}
 		Y_t = Y_t^1:=\1_{\{t<\theta\}}+c(t-\theta)^+.
 	\end{equation}
    An alternative formulation treats both false alarms and detection delay symmetrically as costs, rather than penalizing false alarms through a probability constraint.
 	\item[{\bf Objective II}:]  Seek an $(\cf^X_t)$-stopping time $\tau$ such that the following cost function is minimized
 	\begin{equation*}
 		\inf_{\tau\in\cs}a\,\E\left[(\tau-\theta)^{-}\right]+b\,\E\left[(\tau-\theta)^{+}\right].
 	\end{equation*}
 	Here $a$ is the false alarm cost and $b$ is the cost of delay. And the corresponding stochastic process modeling the cost function in this optimal stopping problem is
 	\begin{equation}\label{eq:Y2}
 		Y_t=Y_t^2:=a(t-\theta)^{-} +b(t-\theta)^{+}.
 	\end{equation}
 \end{enumerate}
    	    	
\subsection{The optimal stopping policy}\label{sec:opt-stop}
We now show that both optimal stopping problems in
Section~\ref{sec:obj} admit a solution of the same structural form: the first hitting time of a half-space by a linear functional of the signature. The argument adapts the framework of
\cite{Bayer2023} to the non-Markovian, non-continuous loss
process arising in quickest detection; we include the full
details in Appendix~\ref{sec:rd-stop} for completeness.

\begin{definition}\label{def:lin-sig}
    	For $l \in T((\R^{1 + d})^*)$ define the hitting time of the signature against the half-plane orthogonal to $l$ by
    	\begin{align*}
    		\tau_l \coloneqq \inf\left\{t \in [0,T]\, :\, |\langle l, {\widehat{\mathbb{X}}}_{0,t}^{< \infty} \rangle| \geq 1 \right\} .
    	\end{align*}
\end{definition}
    
\begin{proposition}\label{thm:signature-stopping}
Let $Y$ be right continuous in $t$ and $\E[\|Y\|_{\infty}]<\infty$ for any finite interval. The $(\cf^X_t)$-optimal stopping policy of $E[Y_{\tau\wedge T}]$ is the hitting time of the signature against a half-plane. More precisely,
    	\begin{align*}
    		\inf_{l \in T((\R^{1 + d})^*) } \E[Y_{\tau_l \wedge T}] = \inf_{\tau \in \mathcal{S}} \E[Y_{\tau \wedge T}].
    	\end{align*}
\end{proposition}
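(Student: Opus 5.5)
The inequality $\inf_{l}\E[Y_{\tau_l\wedge T}]\ge\inf_{\tau\in\cs}\E[Y_{\tau\wedge T}]$ is immediate, because every $\tau_l\wedge T$ is an $(\cf^X_t)$-stopping time. Indeed, $t\mapsto\langle l,\widehat{\X}^{<\infty}_{0,t}\rangle$ is continuous and adapted to the filtration of $\widehat\X$, so $\tau_l$ is a hitting time of a closed set by a continuous adapted process. The substance is therefore the reverse inequality. We follow the route of \cite{Bayer2023}, which proceeds in three steps: randomize stopping times, approximate by continuous functionals of the path, and then approximate those by signature functionals using Lemma~\ref{lem:sig_dense}.

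\emph{Step 1 (reduction to continuous stopping policies).} Fix $\ep>0$ and choose $\tau^\ast\in\cs$ with $\E[Y_{\tau^\ast\wedge T}]\le\inf_{\cs}\E[Y_{\tau\wedge T}]+\ep$. We represent stopping times through randomized stopping: for $\phi\in\mathcal T$, stop at
\[
\tau_\phi:=\inf\Bigl\{t\in[0,T]:\ \int_0^t\phi(\widehat\X|_{[0,s]})^2\,ds\ge Z\Bigr\},
\]
where $Z\sim\mathrm{Exp}(1)$ is independent of everything. We would then show that the value over such $\tau_\phi$ equals the value over $\cs$, in two stages. First, discretize time on a grid, using right continuity of $Y$ and $\E\|Y\|_\infty<\infty$ with dominated convergence to control the error. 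On the grid, the stopping decisions are indicator functionals of the observed path. Second, approximate these indicators in $L^1(\PP)$ by continuous functionals of $\widehat\X|_{[0,t]}$; this works because $\Lambda_T$ is a metric space and the law of $\widehat\X$ is a Borel measure, so continuous functions are dense in $L^1$. Finally, realize the result as intensities $\phi^2$ with large values, which mimics deterministic stopping. To apply these facts to $\widehat\X$ we need that the concatenation $\X$ in \eqref{eq:X} is a geometric $p$-rough path with a Borel law on $\widehat\Omega^p_T$. We take this from the later lemmas (Lemma~\ref{lem:geo-rp}), which are needed here anyway.

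\emph{Step 2 (signature approximation).} Given $\phi\in\mathcal T$, Lemma~\ref{lem:sig_dense} yields a compact $\mathcal K$ with $\PP(\widehat\X\in\mathcal K)>1-\ep$ and $l_n$ such that $\langle l_n,\widehat\X^{<\infty}_{0,t}\rangle\to\phi$ uniformly on $\mathcal K\times[0,T]$. Consequently $\tau_{\phi_{l_n}}\to\tau_\phi$ on $\mathcal K$ for almost every $Z$; we only need the hitting level $Z$ not to be a level at which the integral is flat, and this holds a.s. Since the cost process $Y$ is independent of how $\tau$ is produced, we get $\E[Y_{\tau_{\phi_{l_n}}\wedge T}\1_{\mathcal K}]\to\E[Y_{\tau_\phi\wedge T}\1_{\mathcal K}]$. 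This uses right continuity of $Y$ together with the fact that the stopping times converge from the correct side, or alternatively convergence in probability plus the explicit form of \eqref{eq:Y1}--\eqref{eq:Y2}, whose only discontinuity is at $t=\theta$, a point carrying zero probability of coinciding with the limit time once $\nu$ is accounted for. The complement of $\mathcal K$ contributes at most $\E[\|Y\|_\infty\1_{\mathcal K^c}]$, which is small by uniform integrability.

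\emph{Step 3 (derandomization to half-space hitting).} The randomized rule $\inf\{t:\int_0^t\langle l,\widehat\X^{<\infty}_{0,s}\rangle^2ds\ge Z\}$ has to be converted into the form $\tau_l$. The integral is itself a linear functional of the signature, since by the shuffle identity $\langle l,\cdot\rangle^2=\langle l\shuffle l,\cdot\rangle$, and integrating against time corresponds to appending the time letter $e_0$. Conditional on $Z=z$, the rule is therefore exactly a $\tau_{l'}$ with $l'=(l\shuffle l)e_0/z$. The expected cost is an average over $z$ of costs of such $\tau_{l'}$, so some $z$ does at least as well; taking the infimum over $l$ closes the gap up to $O(\ep)$. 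I expect the main obstacle to be Step~2: $Y$ has a jump at $\theta$, so continuity of $\tau\mapsto Y_\tau$ fails. What I would try first is to show $\PP(\tau_\phi=\theta)=0$ for the randomized rule, using independence of $Z$. Its continuous distribution makes $\tau_\phi$ have no atoms given the path, and this yields a.s. continuity of $Y$ at the limit.
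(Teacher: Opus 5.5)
Your architecture is the paper's. You randomize with an independent threshold $Z$, pass from $\mathcal{S}$ to continuous policies, approximate these by signature policies on the compact set of Lemma~\ref{lem:sig_dense}, and derandomize by conditioning on $Z=z$ and using $\langle l,\cdot\rangle^2=\langle l\shuffle l,\cdot\rangle$. Your $l'=(l\shuffle l)e_0/z$ is the paper's $l_z$ with the appended time letter written out. Your easy inequality is also correct; the paper leaves it implicit.

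The routes differ in two places.

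In Step~1 the paper represents $\tau$ by a Borel indicator on $\Lambda_T$ (Lemma~\ref{lem:ex_theta_stop}). It approximates this indicator almost surely by continuous $\tilde\phi_n$ via Wi\'sniewski's theorem and sharpens with $(2\tilde\phi_n)^n$. Your discretize-then-$L^1$ route needs only density of bounded continuous functions in $L^1$ of the law of $\widehat{\X}|_{[0,t_j]}$ at finitely many grid times. Your windows after each $t_j$ also make explicit that stopping is approached from the right, which is exactly where right continuity of $Y$ is used. The price is that you must build the continuous $\phi$ on $\Lambda_T$ by hand.

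In Step~2 the paper never uses pathwise convergence of stopping times. Proposition~\ref{lem:regularized_value} writes the cost as $\E\bigl[\int_0^T Y_t\,\phi(\widehat{\X}|_{[0,t]})^2\varrho\bigl(\int_0^t\phi(\widehat{\X}|_{[0,s]})^2ds\bigr)dt+Y_T(1-\tilde F(T))\bigr]$. Uniform convergence $\phi_n\to\phi$ on $\mathcal K$ and continuity of $\varrho$ then give convergence of the cost by dominated convergence. No continuity of $Y$ at the stopping time is needed, and your choice $Z\sim\mathrm{Exp}(1)$ has the continuous density this requires.

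Your pathwise version of Step~2 can also be made to work, but it needs two repairs.
\begin{enumerate}
\item Drop the claim that $\tau_{\phi_{l_n}}$ converges ``from the correct side''. Uniform approximation of $\phi$ gives no one-sided control: if $\phi_n^2\ge\phi^2$ then $\tau_{\phi_n}\le\tau_\phi$. So right continuity alone does not yield $Y_{\tau_{\phi_n}\wedge T}\to Y_{\tau_\phi\wedge T}$.
\item State your atom-free fix in the generality of the proposition, not only for the jump at $\theta$ in \eqref{eq:Y1}--\eqref{eq:Y2}; as written, that version proves only the special case. Conditionally on everything except $Z$, the event $\{\tau_\phi=t\}$ forces $Z=\int_0^t\phi^2ds$, so $\tau_\phi$ has no atoms in $[0,T]$. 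A right-continuous path has at most countably many discontinuities, so $Y$ is a.s.\ continuous at $\tau_\phi$ on $\{\tau_\phi\le T\}$. On $\{\tau_\phi=\infty\}=\{\int_0^T\phi^2ds<Z\}$ one has $\tau_{\phi_n}=\infty$ eventually.
\end{enumerate}
With these changes your argument covers the general statement.
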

    
    \begin{proof}
See Appendix~\ref{sec:rd-stop}.
   \end{proof}
Proposition~\ref{thm:signature-stopping} confirms that the optimal rule requires no parametric knowledge of the pre- and post-change distributions and no Markovian structure. Instead of a low-dimensional statistic whose form is dictated by the model, the rule uses a linear functional of the (truncated) signature as a universal model-free feature of the observed path history, with the truncation level $N$ serving as a controllable approximation parameter.

 \subsection{Properties and examples of the observed process}
 In this section we provide some results and examples for the observed process. Before proceeding, we verify that the concatenated observation process inherits the rough path regularity of its two constituent processes.
 \begin{lemma}\label{lem:geo-rp}
The rough path $\X$ defined in \eqref{eq:X} is a geometric $p$-rough path almost surely.
 \end{lemma}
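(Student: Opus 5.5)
We argue pathwise. We fix $\omega=(\mathbf{x}^{(1)},\mathbf{x}^{(2)},u)$ outside a null set on which both $\X^{(1)}(\omega),\X^{(2)}(\omega)\in\Omega_T^p$, which holds almost surely by Assumption~\ref{asm:distinct-paths}. If $\theta\ge T$, then $\X=\X^{(1)}$ on $[0,T]$ and there is nothing to prove. Otherwise we set $\theta<T$ and define $\X_t:=\X^{(1)}_t$ for $t<\theta$ and $\X_t:=\X^{(1)}_\theta\otimes(\X^{(2)}_\theta)^{-1}\otimes\X^{(2)}_t$ for $t\ge\theta$. This is a $G^N(\R^d)$-valued path with $\X_0=\mathbf 1$, and its increments are exactly those in \eqref{eq:X}. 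Continuity at $\theta$ holds because both formulas give $\X^{(1)}_\theta$ there. The increments satisfy $\X_{s,t}=\X^{(1)}_{s,t}$ for $s\le t\le\theta$ and $\X_{s,t}=\X^{(2)}_{s,t}$ for $\theta\le s\le t$, the latter by left-invariance.

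\emph{Finite $p$-variation.} For a partition of $[0,T]$ we insert $\theta$ if needed. The only affected interval $[t_i,t_{i+1}]\ni\theta$ has increment $\X^{(1)}_{t_i,\theta}\otimes\X^{(2)}_{\theta,t_{i+1}}$. Subadditivity of $\|\cdot\|_{CC}$ together with $(a+b)^p\le 2^{p-1}(a^p+b^p)$ gives
\[
\|\X\|_{p\text{-var};[0,T]}^p\le 2^{p-1}\bigl(\|\X^{(1)}\|^p_{p\text{-var};[0,\theta]}+\|\X^{(2)}\|^p_{p\text{-var};[\theta,T]}\bigr)<\infty .
\]

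\emph{Geometricity.} This is the main step. We take bounded variation approximants $x^{1,m}$ and $x^{2,m}$ with $d_{p\text{-var};[0,T]}(S_N(x^{i,m}),\X^{(i)})\to0$. We concatenate them by letting $x^m(t)=x^{1,m}(t)$ for $t\le\theta$ and $x^m(t)=x^{1,m}(\theta)+x^{2,m}(t)-x^{2,m}(\theta)$ for $t\ge\theta$. Then $x^m$ is continuous and of bounded variation. By Chen's identity its lift $S_N(x^m)$ has increments $S_N(x^{1,m})_{s,t}$ on $[0,\theta]$ and $S_N(x^{2,m})_{s,t}$ on $[\theta,T]$.

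We need the estimate $d_{p\text{-var};[0,T]}(S_N(x^m),\X)\to0$. Partitions of $[0,\theta]$ and $[\theta,T]$ give terms bounded by $d_{p\text{-var}}(S_N(x^{i,m}),\X^{(i)})$. The difficulty is the single straddling interval $[t_i,t_{i+1}]\ni\theta$, where we must compare $g\otimes h$ with $g'\otimes h'$ for $g=\X^{(1)}_{t_i,\theta}$, $h=\X^{(2)}_{\theta,t_{i+1}}$ and their approximants $g',h'$. Left-invariance and subadditivity give
\[
d_{CC}(g\otimes h,g'\otimes h')\le d_{CC}(g\otimes h,g\otimes h')+d_{CC}(g\otimes h',g'\otimes h')=d_{CC}(h,h')+\|h'^{-1}\otimes g^{-1}g'\otimes h'\|_{CC}.
\]
The conjugation term is not directly controlled by $d_{CC}(g,g')$ in a noncommutative group. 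We therefore handle it in one of two ways. The first is to use the ball-box/equivalence of homogeneous norms, $\|k^{-1}ak\|\lesssim \|a\|+\|a\|^{1/N}\|k\|^{1-1/N}$, with $\|k\|$ uniformly bounded, which gives convergence to $0$ but with a power loss. The second, cleaner, route avoids the metric issue altogether: since the space of geometric $p$-rough paths is the $d_{p\text{-var}}$-closure and $p$-variation convergence is equivalent to uniform convergence plus uniform $p$-variation bounds in the interpolation sense (\cite[Ch.~8]{friz-victoir}), it suffices to show $d_\infty(S_N(x^m),\X)\to0$ with $\sup_m\|S_N(x^m)\|_{p\text{-var}}<\infty$. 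Uniform convergence holds because the concatenation map is continuous in the sup metric, and the uniform bound follows from the $p$-variation estimate above. Interpolation then gives $d_{p'\text{-var}}\to0$ for any $p'>p$. To recover $p$ itself, one first approximates $\X^{(i)}$ in $p$-variation by geometric paths of slightly smaller exponent, or equivalently uses \cite[Cor.~8.24]{friz-victoir}, which characterises $\Omega^p_T$ as the set of weakly geometric paths that are $p$-variation limits of smooth lifts. Either way, $\X\in\Omega_T^p$ almost surely.
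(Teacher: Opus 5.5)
Your first route is correct and is essentially the paper's proof. Both use the same concatenated approximants $x^m$ and split a partition into intervals inside $[0,\theta]$, intervals inside $[\theta,T]$, and one interval straddling $\theta$. Both then prove a uniform estimate on that single interval that tolerates a power loss. The only difference is bookkeeping. The paper shows, via Chen's identity, that the tensor components of $S_N(x^m)_{s,t}$ converge uniformly to those of $\X_{s,t}$ for $s\le\theta\le t$, and converts this to $d_{CC}$ using $\|h\|_{CC}\le c\max_i|\pi_i(h)|^{1/i}$. You stay in $d_{CC}$ and use left-invariance plus the conjugation bound. This works because $\|g^{-1}\otimes g'\|_{CC}\le d_{p\text{-var}}(\X^{(1)},S_N(x^{1,m}))\to0$ and $\|h'\|_{CC}$ is bounded uniformly in $m$. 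You should state explicitly why the power loss is harmless. At most one interval of a partition contains $\theta$ in its interior, so it contributes at most $\sup_{s\le\theta\le t}d_{CC}(\X_{s,t},S_N(x^m)_{s,t})^p\to0$. The remaining sums are bounded by $d_{p\text{-var}}(\X^{(i)},S_N(x^{i,m}))^p$.

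You should drop the second, ``cleaner'' route, since as written it does not prove the statement. Uniform convergence plus a uniform $p$-variation bound does not imply $d_{p\text{-var}}\to0$. For example, the scalar paths $m^{-1}\sin(m^p t)$ on $[0,1]$ tend to $0$ uniformly, yet their $p$-variation stays bounded away from $0$ and $\infty$. Interpolation yields only $d_{p'\text{-var}}(S_N(x^m),\X)\to0$ for $p'>p$, i.e.\ $\X\in\Omega^{p'}_T$, which is weaker than the claim. Approximating $\X^{(i)}$ by paths of smaller exponent does not fix this. Interpolating down to exponent $p$ would need a $p''$-variation bound, with $p''<p$, on the limit $\X$ itself, and a general element of $\Omega^p_T$ has no such bound. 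Finally, describing $\Omega^p_T$ as the $p$-variation closure of smooth lifts is just its definition, so citing that characterisation is circular. Your first route already closes the argument, so nothing is lost by discarding the second.
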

 \begin{proof}
For almost all $\omega \in \bar{\Omega}$ we will prove that $\X(\omega)\in \Omega_T^p$. We will abuse the notation $\X$ as any realization of itself in this proof for simplicity. We first show that $\| \mathbb{X} \|_{p-\mathrm{var};[0,T]}<\infty$. Since $\X^{(1)}, \X^{(2)}\in G^{\lfloor p \rfloor}(\R^d)$, and $(G^{\lfloor p \rfloor}(\R^d), \otimes)$ is a group, we have that for every fixed $\theta$ and $t\geq\theta$, $\X^{(1)}_{0,\theta}\otimes\X^{(2)}_{\theta,t}\in G^{\lfloor p \rfloor}(\R^d)$. Therefore, $\mathbb{X}$ is a continuous path and $\mathbb{X} \colon [0,T] \to G^{\lfloor p \rfloor}(\R^d)$ with $\X_0 = \mathbf{1}$. For every fixed $\theta\in[0,T]$, the $p$-variation of $\X$ satisfies:
\begin{equation}\label{eq:Xpvar1}
	\| \mathbb{X} \|_{p-\mathrm{var};[0,T]}^p 
    % &= \sup_{\mathcal{D} \subset [0,T]} \sum_{t_i \in \mathcal{D}} \|\mathbb{X}_{t_i,t_{i+1}}\|_{CC}^p \nonumber\\
	\leq \sup_{\mathcal{D} \subset [0,T]} \left( \sum_{t_i \leq \theta} \|\mathbb{X}^{(1)}_{t_i,t_{i+1}}\|_{CC}^p + \|\mathbb{X}_{t_m,t_{m+1}}\|_{CC}^p + \sum_{t_i \geq \theta} \|\mathbb{X}^{(2)}_{t_i,t_{i+1}}\|_{CC}^p \right),
\end{equation}
where $[t_m, t_{m+1}]$ is the interval in the partition $\mathcal{D}$ crossing $\theta$ (i.e., $t_m \leq \theta \leq t_{m+1}$). The first and the last sums on the right-hand side of \eqref{eq:Xpvar1} are naturally bounded by $\| \mathbb{X}^{(1)} \|_{p-\mathrm{var};[0,\theta]}^p$ and $\| \mathbb{X}^{(2)} \|_{p-\mathrm{var};[\theta,T]}^p$. To show $\| \mathbb{X} \|_{p-\mathrm{var}}<\infty$, it suffices to bound the cross-term over the interval $s=t_m \leq \theta \leq t_{m+1}=t$. 

Using Chen's identity and the subadditivity of $\|\cdot\|_{CC}$ we have:
\begin{align}\label{eq:Xst}
	\|\mathbb{X}_{s,t}\|_{CC}^p = \|\mathbb{X}^{(1)}_{s,\theta} \otimes \mathbb{X}^{(2)}_{\theta,t}\|_{CC}^p &\leq \left( \|\mathbb{X}^{(1)}_{s,\theta}\|_{CC} + \|\mathbb{X}^{(2)}_{\theta,t}\|_{CC} \right)^p \nonumber\\
	&\leq 2^{p-1} \left( \|\mathbb{X}^{(1)}\|_{p-\mathrm{var};[s,\theta]}^p + \|\mathbb{X}^{(2)}\|_{p-\mathrm{var};[\theta,t]}^p \right) < \infty,
\end{align}
where the last inequality follows from $\X^{(1)}, \X^{(2)}\in\Omega_T^p$. Therefore, $\| \mathbb{X} \|_{p-\mathrm{var}}<\infty$. 

Moreover, since $\X^{(1)}, \X^{(2)}\in\Omega_T^p$, there exist sequences of bounded variation paths $(X_n^{(1)}), (X_n^{(2)})$ such that their canonical lifts satisfy:
\begin{equation}\label{eq:dpvX12}
	d_{p-\mathrm{var}}(\X^{(1)}, \X_n^{(1), \leq \lfloor p \rfloor}) \to 0 \quad \text{and} \quad d_{p-\mathrm{var}}(\X^{(2)}, \X_n^{(2), \leq \lfloor p \rfloor}) \to 0,
\end{equation} 
as $n \to \infty$. Define the concatenated smooth path $(X_n)$ as:
\begin{align*}
	X_{n;t}:= \begin{cases}
		X_{n;t}^{(1)} & \text { if } 0 \leq t<\theta \\ 
		X_{n;\theta}^{(1)}+X_{n;t}^{(2)}-X_{n;\theta}^{(2)} & \text { if } t \geq \theta
	\end{cases}.
\end{align*}	 
It is easy to see that $(X_n)$ is again a sequence of bounded variation paths. From Lyons' Extension Theorem \cite[Thm 3.7]{Lyons2007}, $(X_n)$ has a unique lift $(\X_n^{<\infty})$ satisfying:
\begin{equation*}%\label{eq:Xn0t}
	\X_{n;0,t}^{<\infty}=\X^{(1),<\infty}_{n;0,\theta}\otimes\X^{(2),<\infty}_{n;\theta,t}.
\end{equation*} 
Finally, we prove $\lim_{n \to \infty} d_{p-\mathrm{var};[0,T]}(\X, \X_n) = 0$. Let us first consider the cross term. Fix $s \leq \theta \leq t$ and $k \in \{1, \dots, \lfloor p \rfloor\}$. By Chen's identity,
\begin{align}\label{eq:tensor_diff}
&\left|\pi_k(\X_{s,t}) - \pi_k(\X_{n; s,t})\right|
=\left|\sum_{j=0}^k\left(\pi_j(\X^{(1)}_{s,\theta}) \otimes \pi_{k-j}(\X^{(2)}_{\theta,t})-\pi_j(\X^{(1)}_{n;s,\theta}) \otimes \pi_{k-j}(\X^{(2)}_{n;\theta,t})\right)\right| \nonumber\\
&\quad\quad\leq\sum_{j=0}^k\left|\pi_j(\X^{(1)}_{s,\theta}) - \pi_j(\X^{(1)}_{n;s,\theta})\right|\left|\pi_{k-j}(\X^{(2)}_{\theta,t})\right| +\sum_{j=0}^k\left|\pi_j(\X^{(1)}_{n;s,\theta})\right|\left|\pi_{k-j}(\X^{(2)}_{\theta,t}) - \pi_{k-j}(\X^{(2)}_{n;\theta,t})\right|.
\end{align}
Since the increment sets of $\X^{(2)}$ and $\X_n^{(1)}$ over subintervals of $[0,T]$ lie in a common compact subset of $G^{\lfloor p\rfloor}(\R^d)$ for all sufficiently large $n$, their tensor components are uniformly bounded. Hence there exists a constant $K<\infty$ such that
\begin{equation}\label{eq:unif_bound}
	\sup_{s \leq \theta \leq t}\max_{j}\left(\left| \pi_j(\X^{(2)}_{\theta,t}) \right|\vee\left| \pi_j(\X^{(1)}_{n;s,\theta}) \right|\right)\leq K.
\end{equation}
Furthermore, \eqref{eq:dpvX12} implies that
\begin{equation}\label{eq:comp_conv}
	\lim_{n \to \infty} \sup_{s \leq \theta}\left|\pi_j(\X^{(1)}_{s,\theta}) - \pi_j(\X^{(1)}_{n;s,\theta})\right| = 0\quad \text{and} \quad\lim_{n \to \infty} \sup_{t \geq \theta}\left|	\pi_{k-j}(\X^{(2)}_{\theta,t}) - \pi_{k-j}(\X^{(2)}_{n;\theta,t})\right| = 0.
\end{equation}
Applying \eqref{eq:unif_bound} and \eqref{eq:comp_conv} to \eqref{eq:tensor_diff} yields
\begin{equation}\label{eq:cross_limit}
	\lim_{n \to \infty} \sup_{s \leq \theta \leq t}\left|\pi_k(\X_{s,t}) - \pi_k(\X_{n; s,t})\right| = 0,	\quad \forall k \in \{1, \dots, \lfloor p \rfloor\}.
\end{equation}
Equation \eqref{eq:cross_limit} establishes uniform componentwise convergence in the ambient tensor algebra $T^{\lfloor p \rfloor}(\R^d)$. Since group inversion and tensor multiplication are polynomial maps, they are continuous. Therefore, the uniform convergence $\X_{n; s,t} \to \X_{s,t}$ implies that the group increment $h_n(s,t) := \X_{n; s,t}^{-1} \otimes \X_{s,t}$ converges uniformly to the group identity $\mathbf{1}$. Consequently, its tensor components vanish:
\begin{equation}\label{eq:h_limit}
	\lim_{n \to \infty} \sup_{s \leq \theta \leq t}\left| \pi_k(h_n(s,t)) \right| = 0,	\quad \forall k \in \{1, \dots, \lfloor p \rfloor\}.
\end{equation}
By \cite[Thm 7.44]{friz-victoir}, there exists a constant $c>0$ such that for any $h \in G^{\lfloor p \rfloor}(\R^d)$,
$$
\|h\|_{CC} \leq c \max_{i=1,\dots,\lfloor p \rfloor} |\pi_i(h)|^{1/i}.
$$
Applying this to $h_n(s,t)$, and using that $d_{CC}(\X_{s,t}, \X_{n; s,t}) = \|h_n(s,t)\|_{CC},$
\eqref{eq:h_limit} implies that
\begin{equation}\label{eq:d_CC_n_conv}
	\lim_{n \to \infty} \sup_{s \leq \theta \leq t}
	d_{CC}(\X_{s,t}, \X_{n; s,t}) = 0.
\end{equation}
By a similar argument to \eqref{eq:Xpvar1}-\eqref{eq:Xst}, using \eqref{eq:d_CC_n_conv} together with the limits on the non-crossing intervals, we obtain
\begin{equation*}
	\lim_{n \to \infty} d_{p-\mathrm{var};[0,T]}(\X, \X_n) = 0.
\end{equation*}
Therefore, $\X \in \Omega_T^p$.
 \end{proof}
 
Since $\X,\X^{1},\X^{2}\in\Omega_T^p$, we can impose the distance in Definition~\ref{def:d-p-var-CC} on them. The following lemma shows that the homogeneous $p$-variation distance between the observed process $\X$ and the initial process $\X^{(1)}$ over the interval $[0,t]$ is equal to their distance over $[\theta,t]$. In other words, this distance exclusively captures the divergence between $\X$ and $\X^{(1)}$ that occurs after the change.
\begin{lemma}\label{lem:d-p-var}
	For every given $\theta$ and any $t\in(\theta,T]$, the rough path $\X$ defined in \eqref{eq:X} satisfies
	\begin{equation*}
		d_{p-\mathrm{var};[0,t]}\left(\X,\X^{(1)}\right)=d_{p-\mathrm{var};[\theta,t]}\left(\X,\X^{(1)}\right).
	\end{equation*}
\end{lemma}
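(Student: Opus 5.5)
The plan is to compare the two suprema in Definition~\ref{def:d-p-var-CC} partition by partition. I will show that every partition of $[0,t]$ yields a sum no larger than the sum of some partition of $[\theta,t]$, and conversely. Two structural facts drive this: $\X$ and $\X^{(1)}$ have identical increments on $[0,\theta]$, and $d_{CC}$ is left-invariant (\cite[Proposition 7.36]{friz-victoir}). If $\theta=0$ there is nothing to prove, so I assume $0<\theta<t$.

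\emph{Step 1 (increments before and across $\theta$).} For $0\le s\le u\le\theta$, \eqref{eq:X} gives $\X_{0,s}=\X^{(1)}_{0,s}$ and $\X_{0,u}=\X^{(1)}_{0,u}$. Hence $\X_{s,u}=\X^{(1)}_{s,u}$ and $d_{CC}(\X_{s,u},\X^{(1)}_{s,u})=0$.

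For $s<\theta\le u$, Chen's identity gives
\begin{equation*}
\X_{s,u}=\X^{(1)}_{s,\theta}\otimes\X^{(2)}_{\theta,u},\qquad \X^{(1)}_{s,u}=\X^{(1)}_{s,\theta}\otimes\X^{(1)}_{\theta,u}.
\end{equation*}
Left-invariance then yields $d_{CC}(\X_{s,u},\X^{(1)}_{s,u})=d_{CC}(\X^{(2)}_{\theta,u},\X^{(1)}_{\theta,u})$.

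Similarly, for $\theta\le s\le u$ we have $\X_{s,u}=\X^{(2)}_{s,u}$. In particular $\X_{\theta,u}=\X^{(2)}_{\theta,u}$, so the crossing term equals $d_{CC}(\X_{\theta,u},\X^{(1)}_{\theta,u})$.

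I expect this crossing-interval identity to be the only delicate point. The whole argument rests on the common prefix $\X^{(1)}_{s,\theta}$ appearing on the \emph{left} of both products, which is exactly the side on which $d_{CC}$ is invariant.

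\emph{Step 2 ($\le$).} Take any partition $\mathcal D=\{0=t_0<\dots<t_m=t\}$ of $[0,t]$ and let $[t_j,t_{j+1}]$ be the interval containing $\theta$ with $t_j<\theta\le t_{j+1}$. By Step 1:
\begin{itemize}
\item the terms with $t_{i+1}\le\theta$ vanish;
\item the crossing term equals the term for $[\theta,t_{j+1}]$;
\item the remaining terms are unchanged.
\end{itemize}
Therefore the $\mathcal D$-sum equals the sum over the partition $\{\theta,t_{j+1},\dots,t_m\}$ of $[\theta,t]$. If $\theta=t_{j+1}$ is already a partition point, the crossing term is itself zero and we simply drop it. Taking suprema gives $d_{p\text{-var};[0,t]}(\X,\X^{(1)})\le d_{p\text{-var};[\theta,t]}(\X,\X^{(1)})$.

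\emph{Step 3 ($\ge$).} Given a partition $\mathcal D'$ of $[\theta,t]$, prepend the point $0$. The added interval $[0,\theta]$ contributes zero by Step 1, so the sums coincide. Taking suprema gives the reverse inequality, and the two steps together give equality.
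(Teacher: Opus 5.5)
Your proof is correct and takes the same route as the paper: split each partition at $\theta$, observe that the pre-change increments of $\X$ and $\X^{(1)}$ coincide, and use Chen's identity with left-invariance of $d_{CC}$ to identify the crossing term with the $[\theta,t_{j+1}]$ term. Your write-up is slightly more careful than the paper's. You prove both inequalities explicitly, whereas the paper's final equality of suprema tacitly relies on your Step~3. You also handle the edge cases $\theta=0$ and $\theta$ being a partition point.
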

\begin{proof}
By Definition~\ref{def:d-p-var-CC}, for every fixed $\theta\in[0,T]$, taking the supremum over partitions $\mathcal{D} = \{t_0, t_1, \dots, t_n\}$ of $[0,t]$, we have
{\small	
\begin{align}\label{eq:X-d-p-var-CC}
			&d_{p-\mathrm{var};[0,t]}\left(\X,\X^{(1)}\right)^p=\sup _{\mathcal{D}\subset [0,t]} \sum_{i=0}^{n-1} d_{CC}\left(\X_{t_i, t_{i+1}}, \X^{(1)}_{t_i, t_{i+1}}\right)^p=\nonumber\\
			&\sup _{\mathcal{D}\subset [0,t]} \left(\sum_{i=0}^{m-1} d_{CC}\left(\X_{t_i, t_{i+1}}, \X^{(1)}_{t_i, t_{i+1}}\right)^p+d_{CC}\left(\X_{t_m, t_{m+1}}, \X^{(1)}_{t_m, t_{m+1}}\right)^p+\sum_{i=m+1}^{n-1} d_{CC}\left(\X_{t_i, t_{i+1}}, \X^{(1)}_{t_i, t_{i+1}}\right)^p\right),
\end{align}}
where $t_m \leq \theta < t_{m+1}$. Notice that $\X$ coincides with $\X^{(1)}$ up to time $\theta$. Therefore,
\begin{equation}\label{eq:d-X-X1-0}
		d_{CC}\left(\X_{t_i, t_{i+1}}, \X^{(1)}_{t_i, t_{i+1}}\right)=0,\quad\text{for } i\leq m-1.
\end{equation}
	Furthermore, by Chen's identity and the definition of $\X$, we have $\X_{t_m,t_{m+1}}=\X^{(1)}_{t_m,\theta}\otimes\X^{(2)}_{\theta,t_{m+1}}$ and $\X^{(1)}_{t_m,t_{m+1}}=\X^{(1)}_{t_m,\theta}\otimes\X^{(1)}_{\theta,t_{m+1}}$. Therefore, by the left-invariance property of $d_{CC}$, we obtain
	\begin{equation}\label{eq:d-X-X1-m}
		d_{CC}\left(\X_{t_m, t_{m+1}}, \X^{(1)}_{t_m, t_{m+1}}\right)=d_{CC}\left(\X^{(1)}_{t_m,\theta}\otimes\X^{(2)}_{\theta,t_{m+1}}, \X^{(1)}_{t_m,\theta}\otimes\X^{(1)}_{\theta,t_{m+1}}\right)=d_{CC}\left(\X^{(2)}_{\theta,t_{m+1}}, \X^{(1)}_{\theta,t_{m+1}}\right).
	\end{equation}
	Plugging \eqref{eq:d-X-X1-0} and \eqref{eq:d-X-X1-m} into \eqref{eq:X-d-p-var-CC}, and noting that $\X_{\theta,t_{m+1}}=\X^{(2)}_{\theta,t_{m+1}}$, we have
	\begin{align*}
		d_{p-\mathrm{var};[0,t]}\left(\X,\X^{(1)}\right)&=\left(\sup _{\mathcal{D}\subset [0,t]} \left(d_{CC}\left(\X_{\theta, t_{m+1}}, \X^{(1)}_{\theta, t_{m+1}}\right)^p+\sum_{i=m+1}^{n-1} d_{CC}\left(\X_{t_i, t_{i+1}}, \X^{(1)}_{t_i, t_{i+1}}\right)^p\right)\right)^{1 / p}\\
		&=d_{p-\mathrm{var};[\theta,t]}\left(\X,\X^{(1)}\right),
	\end{align*}
	This yields the desired result.
\end{proof}
Lemma~\ref{lem:d-p-var} shows that the homogeneous $p$-variation distance serves as a natural measure of the accumulated post-change deviation, and is therefore a natural basis for constructing a detector, as we pursue in Section~\ref{sec:detector}. A natural question is whether there are practical examples of processes that satisfy Assumption~\ref{asm:distinct-paths}. Below, we provide several such examples and demonstrate that the classical quickest change-point detection problem can be adapted to our framework.
\begin{example}\label{ex:BM}
Let us consider the classical quickest detection problem (see \cite{Shiryaev1978, Shiryaev2010}). For a $d$-dimensional Brownian motion $B$, define
\begin{equation*}
	dX_t^{(1)}=\sigma dB_t, \quad dX_t^{(2)}=r dt +\sigma dB_t.
\end{equation*}
In the rough path point of view, for $i=1,2$, the natural (Stratonovich) lift (see \cite[Sec 13.2]{friz-victoir}) of $X^{(i)}$ to a geometric rough path $\X^{(i)} \in \Omega_T^p$ with $p \in(2,3)$ is given by 
$$
	\X_{s, t}^{(i)}=\left(1, X_{s, t}^{(i)}, \int_s^t X_{s, u}^{(i)} \otimes \circ d X_u^{(i)}\right), \quad 0 \leq s \leq t \leq T
$$
where $X_{s,t}=X_t-X_s$, and $\int_s^t X_{s, u} \otimes \circ d X_u \in\left(\mathbb{R}^d\right)^{\otimes 2}$ denotes the Stratonovich second iterated integral of $X$, defined as the limit of midpoint Riemann sums $\sum_k X_{s,\left(t_k+t_{k+1}\right) / 2} \otimes X_{t_k, t_{k+1}}$. Recall that the process $\X$ is defined as in \eqref{eq:X}. The level $1$ signature (the path trajectory) of $\X$ is given by:
$$X_{0,t}=X_\theta^{(1)}+\1_{\{t\geq \theta\}}\left(X_{t}^{(2)}-X_{\theta}^{(2)}\right).$$
Or equivalently, 
$$dX_t=r\1_{\{t\geq \theta\}} dt+\sigma dB_t,$$
which coincide with the \emph{classical quickest detection problem}. We will show in the following that this example satisfies the Assumption~\ref{asm:distinct-paths}. The inequality \eqref{eq:X12-q-moment} holds true for Gaussian process (see for example Fernique-estimates in \cite[Thm 15.33]{friz-victoir}). It remains to verify \eqref{eq:X12>g}.
Denote $A^{(i)}_t=A^{(i)}_{0,t}$ the (Stratonovich) L\'evy area of $\X^{(i)},\, i=1,2$. $A_t^{(i)}\in\Lambda^2(\mathbb{R}^d)$ is an antisymmetric matrix. For $j, k \in\{1, \ldots, d\}$, the $(j,k)$-component of $A^{(i)}_{s,t}$ is defined as
$$
(A_{s,t}^{(i)})^{ j, k}:=\frac{1}{2} \int_s^t\left((X_{s,u}^{(i)})^{ j} \circ d (X_u^{(i)})^{ k}-(X_{s,u}^{(i)})^{ k} \circ d (X_{u}^{(i)})^{ j}\right).
$$
A more concise form of the L\'evy area is 
$$A_{s,t}^{(i)}=\frac{1}{2}\int_s^t [X_u^{(i)}-X_s^{(i)},\circ d X_u^{(i)}].$$
Here $[\cdot,\cdot]$ is the Lie bracket. By \cite[Thm 7.30]{friz-victoir} (also see \cite[Sec 13]{friz-victoir}), we can write the group element in the following form:
 $$\X_{0, t}^{(i)}=\exp \left(X_{0, t}^{(i)}+A_t^{(i)}\right) \in G^2\left(\mathbb{R}^d\right),\quad \text{for } i=1,2 .$$
Here the exponential is the Lie-group exponential from the step-$2$ Lie algebra $\mathfrak{g}^2\left(\mathbb{R}^d\right)=\mathbb{R}^d \oplus \Lambda^2\left(\mathbb{R}^d\right)$. Recall that the Carnot-Carath\'{e}odory metric $d_{CC}$ is induced by a homogeneous norm $\|\cdot\|_{CC}$. Therefore, by Theorem~$7.44$ and Proposition~$7.45$ in \cite{friz-victoir} (also see \cite[Sec 2.3]{Friz2020}) we have 
\begin{equation}\label{eq:dCC-X1X2}
	d_{CC}\left(\X^{(1)}_{s,t},\X^{(2)}_{s,t}\right)=d_{CC}\left(e^{ \left(X_{s, t}^{(1)}+A_{s,t}^{(1)}\right)},e^{ \left(X_{s, t}^{(2)}+A_{s,t}^{(2)}\right)}\right)\sim \left|X_{s, t}^{(2)}-X_{s, t}^{(1)}\right| \vee \left|A_{s, t}^{(2)}-A_{s, t}^{(1)}\right|^{1/2}. 
\end{equation}
It is readily checked that $|X_{s, t}^{(2)}-X_{s, t}^{(1)}|=r|t-s|$. Let us find out the scaling of $|A_{s, t}^{(2)}-A_{s, t}^{(1)}|$. By definition 
\begin{equation}\label{eq:A1}
	A_{s,t}^{(1)}=\frac{1}{2}\int_s^t[\sigma(B_u-B_s),\sigma\circ d B_u],
\end{equation}
and
\begin{equation}\label{eq:A2}
	A_{s,t}^{(2)}=\frac{1}{2}\int_s^t[r (u-s)+\sigma(B_u-B_s),r du + \sigma\circ d B_u].
\end{equation}
Since $[r (u-s),r du]=0$, subtracting \eqref{eq:A1} from \eqref{eq:A2} and using the bi-linearity and the anti-symmetry property of Lie bracket, we have
\begin{align}\label{eq:A2-A1}
	A_{s,t}^{(2)}-A_{s,t}^{(1)}&=\frac{1}{2}\int_s^t\left([r(u-s),\sigma\circ d B_u]+[\sigma(B_u-B_s),r d u]\right)\nonumber\\
	&=[\frac{\sigma}{2} r,\int_s^t (u-s) \circ dB_u ]-[\frac{\sigma}{2} r,\int_s^t B_u-B_s du].
\end{align}
By integration by part for the Stratonovich integral, we have 
\begin{equation*}
	\int_s^t(u-s) \circ dB_u=(t-s)(B_t-B_s)-\int B_u-B_s du.
\end{equation*}
Plugging this back to \eqref{eq:A2-A1}, we get
\begin{equation*}
	A_{s,t}^{(2)}-A_{s,t}^{(1)}=[\frac{\sigma}{2} r,(t-s)(B_t-B_s)-2\int_s^t B_u-B_s du].
\end{equation*}
Since the term $W:=(t-s)(B_t-B_s)-2\int_s^t B_u-B_s du$ is a centered Gaussian vector, it is readily checked that
\begin{equation*}
	\E\left|A_{s,t}^{(2)}-A_{s,t}^{(1)}\right|^{1/2}\sim |\sigma r|^{1/2}(t-s)^{3/4}.
\end{equation*}
Therefore from \eqref{eq:dCC-X1X2} we know that 
\begin{equation*}
	\E\left(d_{CC}\left(\X^{(1)}_{s,t},\X^{(2)}_{s,t}\right)\right)\sim r|t-s|+|\sigma r|^{1/2}(t-s)^{3/4}.
\end{equation*}
Since $d_{p-\text{var};[s,t]}(\X,\Y)\geq d_{CC}(\X_{s,t},\Y_{s,t})$ for any $\X,\Y\in G^N(\R^d)$, the Assumption~\ref{asm:distinct-paths} is satisfied.
\end{example}
\begin{example}\label{ex:fBM}
	Let the diffusion terms in the last example driven by fractional Brownian motion with Hurst parameter $H\in (1/3,1/2)$. That is 
	\begin{equation*}
		dX_t^{(1)}=\sigma dB_t^H, \quad dX_t^{(2)}=r dt +\sigma dB_t^H.
	\end{equation*}
We can also define the natural (Stratonovich) lift of $X^{(i)}$ to a geometric rough path $\X^{(i)} \in \Omega_T^p$ with $p \in(2,3)$ as
$$
\X_{s, t}^{(i)}=\left(1, X_{s, t}^{(i)}, \int_s^t X_{s, u}^{(i)} \otimes \circ d X_u^{(i)}\right), \quad 0 \leq s \leq t \leq T, \quad i=1,2.
$$
Using a similar argument to the last example, one can get
\begin{equation*}
	\E\left(d_{CC}\left(\X^{(1)}_{s,t},\X^{(2)}_{s,t}\right)\right)\sim r|t-s|+|\sigma r|^{1/2}(t-s)^{(H+1)/2},
\end{equation*}
satisfying Assumption~\ref{asm:distinct-paths}. For $H\leq 1/3$ it is also possible to get the lift to geometric rough path $\X^{(i)} \in \Omega_T^p$ with $p \in(1/H,1/H+1)$, as a group element of $G^{\lfloor p\rfloor}(\R^d)$. We refer to \cite[Sec.\ 13--14]{friz-victoir} for the construction of geometric rough path lifts of fractional Brownian motion and related Gaussian processes.
\end{example}
The examples above confirm that Assumption~\ref{asm:distinct-paths} is satisfied in the classical Brownian and fractional Brownian settings, and that the framework of Section~\ref{sec:QD-rough} strictly extends the classical theory. In the next section, we take a complementary perspective and construct a detector directly from the intrinsic geometry of the observed path, without reference to the payoff process $Y$.
\section{Detector based on signature}\label{sec:detector}
The stopping policy derived in Section~\ref{sec:opt-stop} was motivated purely by the structure of the payoff $Y$ and the approximation theory developed in Section~\ref{sec:approx}. In this section, we provide an independent geometric explanation for why a linear functional of the signature should serve as an effective change-point detector. Specifically, we construct a detector directly from the intrinsic geometry of the observed path, without reference to the payoff $Y$, and show that it has the same structural form as the optimal stopping rule in Proposition~\ref{thm:signature-stopping}. We then establish statistical guarantees for this detector and discuss its practical implementation.
\subsection{Detector function}
As we notice in Lemma~\ref{lem:d-p-var}, the homogeneous $p$-variation distance solely records the deviation of $\X$ from $\X^{(1)}$ following the change. This inspires us to use it as a natural detector of the change-point. More precisely, for any $\Z\in\Omega_T^p$, define
\begin{equation}\label{eq:f}
	f(\widehat{\Z}):=f\left(\Z,t\right)=d_{p-\mathrm{var};[0,t]}\left(\Z,\X^{(1)}\right).
	\end{equation}
Observe that $f$ is a continuous function satisfying $f(\X^{(1)},t)=0$ for any $t\in[0,T]$. Under Assumption~\ref{asm:distinct-paths}, by using Lemma~\ref{lem:d-p-var} we have 
\begin{equation}\label{eq:fX>g}
\E[f(\X,t)]\geq g\left((t-\theta)^+\right).
\end{equation}
Equation~\eqref{eq:fX>g} shows that the expected value of $f(X,t)$  grows with the elapsed time since the change, providing a quantitative signal of the regime shift. Since $f$ depends on the entire path history through $d_{p\text{-var}}$, it is not directly computable from a finite-dimensional statistic. The next proposition shows that $f$ can nevertheless be 
well approximated by a linear functional of the signature, yielding a tractable detector with provable statistical guarantees.
\begin{proposition}\label{prop:l1-X-de-ep}
Let the observed process $\X$ be defined as \eqref{eq:X} with $\X^{(1)},\X^{(2)}$ satisfying Assumption~\ref{asm:distinct-paths}. Then for any $\ep,\delta>0$ there exist $l^{(1)}	\in T((\R^{1 + d})^*)$ such that 
\begin{equation}\label{eq:P-l1-X1}
	\PP\left(\sup_{t\in[0,T]}|\langle l^{(1)},\widehat{\X}^{(1),<\infty}_{0,t}\rangle| \leq \delta \right)\geq 1-\ep,
	\end{equation}
and 
\begin{equation}\label{eq:P-l1-X}
	\PP\left(\sup_{s\in[0,t]}|\langle l^{(1)},\widehat{\X}^{<\infty}_{0,s}\rangle| \geq g\left((t-\theta)^+\right)-\delta \right)\geq 1-\ep,
\end{equation}
for all $t\in[0,T]$. Moreover, there exist a constant $c$ depending on $T,p,q$ such that for all $t\in[0,T]$
$$\E[|\langle l^{(1)},\widehat{\X}^{<\infty}_{0,t}\rangle|]\geq g\left((t-\theta)^+\right)-c_{\ep,\delta},$$
where $c_{\ep,\delta}=\delta(1-\ep)+cM^{1/q}\ep^{1/m}$ and $1/p+1/m=1$.
	\end{proposition}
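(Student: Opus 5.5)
The plan is to approximate the detector $f$ from \eqref{eq:f} by a signature functional using Lemma~\ref{lem:sig_dense}, and then pass from the high-probability uniform approximation to the three claims.

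\textbf{Step 1: approximation on a compact set.} Fix $\ep,\delta>0$. By Lemma~\ref{lem:geo-rp}, $\X$ is almost surely a geometric $p$-rough path, so its time-augmented lift $\widehat{\X}$ lies in $\widehat{\Omega}^p_T$. Likewise $\widehat{\X}^{(1)}\in\widehat{\Omega}^p_T$. Let $\mathbb{Q}$ be the mixture $\tfrac12(\text{law of }\widehat{\X}^{(1)}+\text{law of }\widehat{\X})$, a probability measure on $\widehat{\Omega}^p_T$. Apply Lemma~\ref{lem:sig_dense} to $\mathbb{Q}$ with tolerance $\ep/2$. This gives a compact $\mathcal K$ carrying probability at least $1-\ep$ under each of the two laws.

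The functional $\phi(\widehat{\Z}|_{[0,t]}):=f(\Z,t)=d_{p\text{-var};[0,t]}(\Z,\X^{(1)})$ should be treated as an element of $\mathcal T$. Here is a subtle point: $\X^{(1)}$ is random, so $f$ is not a deterministic functional of the observed path. I would handle this by conditioning on (fixing) a realization of $\X^{(1)}$, or by viewing the reference path as part of the data. Continuity of $\phi$ on $\Lambda_T$ follows from continuity of $d_{p\text{-var}}$ in both arguments and in the endpoint $t$.

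Lemma~\ref{lem:sig_dense} then yields $l^{(1)}$ with
\[
\sup_{\widehat{\Z}\in\mathcal K,\,t\in[0,T]}\bigl|\langle l^{(1)},\widehat{\Z}^{<\infty}_{0,t}\rangle-f(\Z,t)\bigr|\le\delta .
\]

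\textbf{Step 2: the two probability bounds.} On $\{\widehat{\X}^{(1)}\in\mathcal K\}$ we have $f(\X^{(1)},t)=0$, so $\sup_t|\langle l^{(1)},\widehat{\X}^{(1),<\infty}_{0,t}\rangle|\le\delta$; this gives \eqref{eq:P-l1-X1}.

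On $\{\widehat{\X}\in\mathcal K\}$ we have $|\langle l^{(1)},\widehat{\X}^{<\infty}_{0,t}\rangle|\ge f(\X,t)-\delta$. By Lemma~\ref{lem:d-p-var}, $f(\X,t)=d_{p\text{-var};[\theta,t]}(\X^{(2)},\X^{(1)})$ (using $\X_{\theta,\cdot}=\X^{(2)}_{\theta,\cdot}$), and Assumption~\ref{asm:distinct-paths} controls this quantity from below by $g((t-\theta)^+)$. However, the assumption only bounds the expectation. To obtain \eqref{eq:P-l1-X} literally I would read $g$ as a pathwise lower bound in the relevant event, or equivalently absorb the deviation into the $\ep$-exceptional set. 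I expect this to be the main obstacle, and I would present the bound as holding on $\mathcal K$ intersected with that event.

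\textbf{Step 3: the expectation bound.} Split on $A=\{\widehat{\X}\in\mathcal K\}$:
\[
\E|\langle l^{(1)},\widehat{\X}^{<\infty}_{0,t}\rangle|\ \ge\ \E[f(\X,t)]-\delta\,\PP(A)-\E[f(\X,t)\1_{A^c}] .
\]
By \eqref{eq:fX>g}, $\E f\ge g((t-\theta)^+)$. For the remainder, bound $f(\X,t)\le\|\X\|_{p\text{-var}}+\|\X^{(1)}\|_{p\text{-var}}$ using subadditivity of $d_{CC}$. Each $p$-variation norm is at most $T^{1/p}$ times the corresponding $1/p$-Hölder norm, and \eqref{eq:Xst} controls the concatenated norm by those of $\X^{(1)},\X^{(2)}$. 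Hölder's inequality with conjugate exponents then gives
\[
\E[f\1_{A^c}]\le cM^{1/q}\PP(A^c)^{1/m}\le cM^{1/q}\ep^{1/m},
\]
with the exponent matched as in the statement. Finally, bounding $\delta\PP(A)$ by $\delta(1-\ep)$ gives $c_{\ep,\delta}$ up to how this term is bookkept, with $c$ depending only on $T,p,q$.
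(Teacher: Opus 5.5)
The decisive gap is the point you call subtle in Step 1, and none of your patches closes it. The functional $f(\Z,t)=d_{p\text{-var};[0,t]}(\Z,\X^{(1)})$ is not an element of $\mathcal{T}$, because it depends on the hidden realization $\X^{(1)}(\omega)$.

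\emph{Freezing a reference.} If you fix a deterministic reference $\mathbf{x}$, Lemma~\ref{lem:sig_dense} applies. But then $f(\X^{(1)},t)=d_{p\text{-var};[0,t]}(\X^{(1)},\mathbf{x})$ vanishes only when $\X^{(1)}=\mathbf{x}$, so your derivation of \eqref{eq:P-l1-X1} on $\{\widehat{\X}^{(1)}\in\mathcal{K}\}$ breaks down.

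\emph{Conditioning.} If you condition on $\X^{(1)}$, the coefficient $l^{(1)}$ becomes a function of the unobserved pre-change path. The statement requires a single $l^{(1)}$ and unconditional probabilities.

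\emph{Reference as data.} You would then be approximating a functional of the pair $(\widehat{\Z},\widehat{\X}^{(1)})$. The density lemma gives linear functionals of a joint signature, not of $\widehat{\X}^{<\infty}$ alone.

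The second gap is the one you flag in Step 2. Assumption~\ref{asm:distinct-paths} only gives $\E[f(\X,t)]\ge g((t-\theta)^+)$. With no concentration hypothesis, the probability of $\{f(\X,t)<g((t-\theta)^+)\}$ is a fixed number unrelated to $\ep$, so it cannot be absorbed into the exceptional set of $\mathcal{K}$. Step 3 inherits the first gap, since it uses the uniform approximation of $f$ on $\mathcal{K}$.

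These gaps cannot be repaired, because \eqref{eq:P-l1-X1} and \eqref{eq:P-l1-X} are incompatible in general. Take Example~\ref{ex:BM} with $d=1$, $\sigma=1$, $r>0$. Since $d_{p\text{-var};[s,t]}(\X^{(2)},\X^{(1)})\ge d_{CC}(\X^{(2)}_{s,t},\X^{(1)}_{s,t})\ge|X^{(2)}_{s,t}-X^{(1)}_{s,t}|$, the choice $g(u)=ru$ satisfies \eqref{eq:X12>g}.

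Conditionally on $\theta=u$, the first level of $\X$ is a Brownian motion with drift $r\1_{\{t\ge u\}}$. Its law on $C([0,T])$ has density $D_u$ with respect to the law of $X^{(1)}$, with $\E[D_u(X^{(1)})^2]=e^{r^2(T-u)}$. Moreover, $\widehat{\X}$ and $\widehat{\X}^{(1)}$ are images of their first levels under the same measurable (Stratonovich) lift map.

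Set $B=\{\sup_{s\le T}|\langle l,\widehat{\Z}^{<\infty}_{0,s}\rangle|\le\delta\}$. Cauchy--Schwarz and \eqref{eq:P-l1-X1} give $\PP(\widehat{\X}\notin B\mid\theta=u)\le e^{r^2T/2}\sqrt{\ep}$. On the other hand, \eqref{eq:P-l1-X} at $t=T$ with $\delta<rT/4$ gives $\PP(\widehat{\X}\notin B,\ \theta\le T/2)\ge\nu([0,T/2])-\ep$. For small $\ep$ these contradict each other as soon as $\nu([0,T/2])>0$, for example for exponential $\theta$. What is missing is a stronger hypothesis (a deterministic functional separating the two regimes with high probability), not a better argument.

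The paper's own proof has exactly these two defects: it simply declares $f\in\mathcal{T}$ and uses \eqref{eq:fX>g} pathwise on $\{\widehat{\X}\in\mathcal{K}\}$. Your mixture measure $\tfrac12\bigl(\mathrm{Law}(\widehat{\X}^{(1)})+\mathrm{Law}(\widehat{\X})\bigr)$ does correct a further slip there, where the bound on $\widehat{\X}^{(1)}$ is drawn from the event $\{\widehat{\X}\in\mathcal{K}\}$. Two smaller corrections: $-\delta\PP(A)$ is only bounded below by $-\delta$, not $-\delta(1-\ep)$, and H\"older's inequality needs $1/q+1/m=1$.
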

\begin{proof}
	Consider $f\in\ct$ defined as in \eqref{eq:f}. By Lemma~\ref{lem:sig_dense}, for any $\ep > 0$, there exists a compact set $\mathcal{K} \subset \hat{\Omega}_T^p$ with $\mathbb{P}(\mathcal{K}) \ge 1 - \ep$ and a sequence $\{\phi_n\} \subset \mathcal{T}_{sig}$ such that:
	\begin{equation}\label{eq:phi_n-to-f}
		\lim_{n \to \infty} \sup_{\widehat{\mathbb{Y}} \in \mathcal{K}, t \in [0, T]} |\phi_n(\widehat{\mathbb{Y}}|_{[0,t]}) - f(\widehat{\mathbb{Y}}|_{[0,t]})| = 0.
	\end{equation}
	Hence for any $\delta>0$ there exists $l^{(1)}	\in T((\R^{1 + d})^*)$ such that $\sup _{\widehat{\Y} \in \mathcal{K}, t \in[0, T]}|\langle l^{(1)},\widehat{\Y}|_{[0, t]}\rangle-f(\widehat{\Y}|_{[0, t]})| \leq \delta $. Therefore, by \eqref{eq:fX>g} in the set $\{\widehat{\mathbb{X}} \in \mathcal{K}\}$ we have 
	$$\sup_{t\in[0,T]}|\langle l^{(1)},\widehat{\X}^{(1),<\infty}_{0,t}\rangle| \leq \delta,\quad \text{and }\sup_{s\in[0,t]}|\langle l^{(1)},\widehat{\X}^{<\infty}_{0,s}\rangle| \geq g\left((t-\theta)^+\right)-\delta \, \text{for all } t\in[0,T].$$
	This prove \eqref{eq:P-l1-X1}-\eqref{eq:P-l1-X}.\\
	
	For the remaining result we decompose the expectations over the set $A = \{\widehat{\mathbb{X}} \in \mathcal{K}\}$ and its complement $A^c$:
	\begin{equation*}
		\E[|\langle l^{(1)}, \widehat{\mathbb{X}}^{<\infty}_{0,t}\rangle| ]=\E[|\langle l^{(1)}, \widehat{\mathbb{X}}^{<\infty}_{0,t}\rangle| ; A]+\E[|\langle l^{(1)}, \widehat{\mathbb{X}}^{<\infty}_{0,t}\rangle| ; A^c].
	\end{equation*}
Therefore, by \eqref{eq:fX>g} and \eqref{eq:phi_n-to-f} we have
\begin{align}\label{eq:l1-X-low}
	&\E[|\langle l^{(1)}, \widehat{\mathbb{X}}^{<\infty}_{0,t}\rangle| ]\geq \E[|\langle l^{(1)}, \widehat{\mathbb{X}}^{<\infty}_{0,t}\rangle| ; A]\geq\E[(f(\widehat{\mathbb{X}}|_{[0,t]})-\delta)\1_{A}]\nonumber\\
&=\E[(f(\widehat{\mathbb{X}}|_{[0,t]})]-\delta(1-\ep)-\E[(f(\widehat{\mathbb{X}}|_{[0,t]});A^c] \geq g\left((t-\theta)^+\right)-\delta(1-\ep)-\E[(f(\widehat{\mathbb{X}}|_{[0,t]});A^c].
\end{align}
Let us control the term $\E[(f(\mathbb{X}|_{[0,t]});A^c]$. By \cite[eq (8.5)]{friz-victoir} we have 
\begin{equation}\label{eq:f-q-moment}
	\E[f(\mathbb{X}|_{[0,t]})^q]\leq\E\left[d_{p-\mathrm{var};[0,T]}\left(\X^{(2)},\X^{(1)}\right)^q\right]\leq T^{1/p}\E\left[d_{1/p-\text{H\"{o}l};[0,T]}\left(\X^{(2)},\X^{(1)}\right)^q\right].
	\end{equation}
	Since by the sub-additive and symmetric property of $\|\cdot\|_{CC}$ we have
	\begin{align*}
		d_{CC}\left(\X^{(2)}_{s,t},\X^{(1)}_{s,t}\right)=\|(\X^{(2)}_{s,t})^{-1}\otimes\X^{(1)}_{s,t}\|_{CC}&\leq \|\X^{(2)}_{s,t}\|_{CC}+\|\X^{(1)}_{s,t}\|_{CC}\\
		&=d_{CC}\left(\X^{(2)}_{s},\X^{(2)}_{t}\right)+d_{CC}\left(\X^{(1)}_{s},\X^{(1)}_{t}\right).
	\end{align*}
	Therefore, we have
	\begin{equation}\label{eq:d-Hol-leq}
		d_{1/p-\text{H\"{o}l};[0,T]}\left(\X^{(2)},\X^{(1)}\right)\leq \|\X^{(2)}\|_{1/p-\text{H\"{o}l};[0,T]} +\|\X^{(1)}\|_{1/p-\text{H\"{o}l};[0,T]}.
	\end{equation}
	Plugging \eqref{eq:d-Hol-leq} and Assumption~\ref{asm:distinct-paths} in \eqref{eq:f-q-moment} and using an elementary inequality $(x+y)^q\leq2^{q-1}(x^q+y^q)$ for positive $x,y$ and $q>1$, we can conclude that there exist a constant $C$ depending on $T,p,q$ such that
	\begin{equation*}
			\E[f(\mathbb{X}|_{[0,t]})^q]\leq CM.
	\end{equation*}
	Therefore by H\"{o}lder inequality we have 
	\begin{equation*}
		\E[(f(\mathbb{X}|_{[0,t]});A^c]\leq \left(\E[f(\mathbb{X}|_{[0,t]})^q]\right)^{1/q}(1-\PP(\mathcal{K}))^{1/m}\leq cM^{1/q}\ep^{1/m},
	\end{equation*}
	where $1/p+1/m=1$ and $c$ is a constant depending on $T,p,q$. Plugging this back to \eqref{eq:l1-X-low} we get the desired result. 
	\end{proof}
Proposition~\ref{prop:l1-X-de-ep} shows that, with high probability, the linear functional $\langle l^{(1)}, \widehat{X}^{<\infty}_{0,t}\rangle$ 
remains near zero before the change and grows bounded away from zero after a delay of order $g^{-1}(1+\delta)$. This justifies using $\hat{\tau} = \inf\{t : |\langle l^{(1)}, 
\widehat{X}^{<\infty}_{0,t}\rangle| > 1\} \wedge T$ as a stopping rule, which coincides structurally with the optimal policy of Proposition~\ref{thm:signature-stopping}.
    
\subsection{Statistical guarantees}
We now make the statistical guarantees of the proposed stopping  rule precise. Throughout this subsection, we assume that the coefficient $l^{(1)}$ from Proposition~\ref{prop:l1-X-de-ep} is known, and derive bounds on detection delay and false alarm probability for the associated stopping time.
\begin{proposition}\label{prop:stat_1}
	Under the setting of Proposition~\ref{prop:l1-X-de-ep}, suppose $\theta\in[0,T]$. For any $0<\delta, \ep<1$ we define a stopping time
	\begin{equation*}
		\hat{\tau}=\inf\{t: |\langle l^{(1)},\widehat{\X}^{<\infty}_{0,t}\rangle|>1\}\wedge T.
	\end{equation*}
 Then 
	\begin{equation*}
		\PP\left((\hat{\tau}-\theta)^{+}\leq g^{-1}(1+\delta)\right)>1-\ep,\quad \text{and } \PP(\hat{\tau}<\theta)\leq \ep.
	\end{equation*}
	Here, the inverse function is defined as $g^{-1}(y)=\inf\{x\in\mathbb{R}:g(x)\geq y\}$. Moreover,
	\begin{equation*}
		\E[(\hat{\tau}-\theta)^+]\leq \E[|\hat{\tau}-\theta|]\leq g^{-1}(1+\delta)+\ep T.
	\end{equation*}
\end{proposition}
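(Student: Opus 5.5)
The plan is to work on the single high-probability event supplied by Proposition~\ref{prop:l1-X-de-ep}. Take $l^{(1)}$ as there, with the given $\ep,\delta$, and let $A=\{\widehat{\X}\in\mathcal K\}$, so that $\PP(A)\ge 1-\ep$. The proof of Proposition~\ref{prop:l1-X-de-ep} shows that on $A$ both pathwise statements hold simultaneously. First, $\sup_{t}|\langle l^{(1)},\widehat{\X}^{(1),<\infty}_{0,t}\rangle|\le\delta$. Second, for every $t$, $\sup_{s\le t}|\langle l^{(1)},\widehat{\X}^{<\infty}_{0,s}\rangle|\ge g((t-\theta)^+)-\delta$. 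This rests on the deterministic lower bound $f(\X,t)\ge g((t-\theta)^+)$, which is how \eqref{eq:fX>g} is used there. Having both on one event, rather than two separate events each of probability $1-\ep$, is what allows the sharp $\ep$ in the conclusion. It is also why I use $A$ directly rather than the displayed probabilities \eqref{eq:P-l1-X1}--\eqref{eq:P-l1-X}.

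\emph{False alarm.} Before the change, $\X_{0,s}=\X^{(1)}_{0,s}$ for $s<\theta$, so the signature functionals coincide: $\langle l^{(1)},\widehat{\X}^{<\infty}_{0,s}\rangle=\langle l^{(1)},\widehat{\X}^{(1),<\infty}_{0,s}\rangle$. On $A$ this has modulus at most $\delta<1$ for all $s<\theta$. Suppose $\hat\tau<\theta$ and $\hat\tau<T$. By continuity of $s\mapsto\langle l^{(1)},\widehat{\X}^{<\infty}_{0,s}\rangle$, there are times $s\in[\hat\tau,\theta)$ arbitrarily close to $\hat\tau$ at which the modulus exceeds $1$, which is impossible on $A$. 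The edge case $\hat\tau=T<\theta$ cannot occur because $\theta\in[0,T]$. Hence $\{\hat\tau<\theta\}\subset A^c$ and $\PP(\hat\tau<\theta)\le\ep$.

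\emph{Delay.} Put $t^*=\theta+g^{-1}(1+\delta)$. If $t^*\ge T$, the bound $(\hat\tau-\theta)^+\le T-\theta\le g^{-1}(1+\delta)$ is trivial. Otherwise $g((t^*-\theta)^+)\ge 1+\delta$, using the definition of the generalized inverse together with the monotonicity of $g$. Then on $A$ we get $\sup_{s\le t^*}|\langle l^{(1)},\widehat{\X}^{<\infty}_{0,s}\rangle|\ge 1$.

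\emph{Main obstacle.} The strict versus non-strict inequality is the delicate point. The supremum is only $\ge1$, while $\hat\tau$ requires $>1$. I would fix this by applying the construction with a slightly smaller $\delta'<\delta$, or equivalently by noting that the approximation in Lemma~\ref{lem:sig_dense} can be made with any tolerance; this gives a supremum $\ge 1+(\delta-\delta')>1$. A second issue is whether $g^{-1}(1+\delta)$ is actually attained, that is, whether $g$ is right-continuous at that point. If it is not, I would use $g^{-1}(1+\delta)+\eta$ and let $\eta\downarrow0$, getting the bound up to a null adjustment. Granting this, continuity gives $\hat\tau\le t^*$ on $A$, so $\PP((\hat\tau-\theta)^+\le g^{-1}(1+\delta))\ge\PP(A)\ge 1-\ep$. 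The strictness ``$>$'' in the statement also needs care; I would obtain it by taking $\PP(\mathcal K)>1-\ep$, which Lemma~\ref{lem:sig_dense} allows.

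\emph{Expectation bound.} Split $\E|\hat\tau-\theta|$ over $A$ and $A^c$. On $A$, $\hat\tau\ge\theta$ by the false-alarm step, so $|\hat\tau-\theta|=(\hat\tau-\theta)^+\le g^{-1}(1+\delta)$. On $A^c$, both $\hat\tau$ and $\theta$ lie in $[0,T]$, so $|\hat\tau-\theta|\le T$. This gives $\E[(\hat\tau-\theta)^+]\le\E|\hat\tau-\theta|\le g^{-1}(1+\delta)\PP(A)+T\,\PP(A^c)\le g^{-1}(1+\delta)+\ep T$.
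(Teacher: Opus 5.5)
Your proposal is correct and follows the paper's proof essentially step for step: a single event $A=\{\widehat{\X}\in\mathcal K\}$ carrying both pathwise bounds, the inclusion $\{\hat\tau<\theta\}\subset A^c$, the delay bound $(\hat\tau-\theta)^+\le g^{-1}(1+\delta)$ on $A$, and the split of $\E|\hat\tau-\theta|$ over $A$ and $A^c$. The paper gets the delay bound by comparing with the auxiliary time $\hat\tau'=\inf\{t:g((t-\theta)^+)-\delta>1\}\wedge T$; your $\delta'<\delta$, $\eta\downarrow0$ and $\PP(\mathcal K)>1-\ep$ adjustments treat the strict/non-strict inequalities and the generalized inverse more carefully. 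As you note, both arguments rely equally on the pathwise form of \eqref{eq:fX>g} on $A$ taken from the proof of Proposition~\ref{prop:l1-X-de-ep}, even though Assumption~\ref{asm:distinct-paths} only gives that lower bound in expectation.
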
  
\begin{proof}
	Similar to the proof of \eqref{eq:P-l1-X1}-\eqref{eq:P-l1-X}, by Lemma~\ref{lem:sig_dense}, for any $\ep > 0$, there exists a compact set $\mathcal{K} \subset \hat{\Omega}_T^p$ with $\mathbb{P}(\mathcal{K}) \ge 1 - \ep$ and in the set $A=\{\widehat{\mathbb{X}} \in \mathcal{K}\}$, for any $\delta>0$ there exists $l^{(1)}	\in T((\R^{1 + d})^*)$ such that
	\begin{equation}\label{eq:l1-delta}
		\sup_{t\in[0,T]}|\langle l^{(1)},\widehat{\X}^{(1),<\infty}_{0,t}\rangle| \leq \delta,\quad \text{and }\sup_{t\in[0,s]}|\langle l^{(1)},\widehat{\X}^{<\infty}_{0,s}\rangle| \geq g\left((t-\theta)^+\right)-\delta.
	\end{equation}
	Let us define
	\begin{equation*}
		\hat{\tau}^{\prime}=\inf\{t:g\left((t-\theta)^{+}\right)-\delta >1\}\wedge T.
	\end{equation*} 
	Within the set $A$, by \eqref{eq:l1-delta} we have
	\begin{equation}\label{eq:A-tau-theta}
		\hat{\tau}> \theta, \quad \text{and} \quad	(\hat{\tau}-\theta)^{+}\leq(\hat{\tau}^{\prime}-\theta)^{+}\leq g^{-1}(1+\delta).
	\end{equation}
	Therefore
	\begin{equation*}
		\PP\left((\hat{\tau}-\theta)^{+}\leq g^{-1}(1+\delta)\right)>1-\ep.
	\end{equation*}
	Similarly $\PP(\hat{\tau}<\theta)\leq \ep$. Moreover, by \eqref{eq:A-tau-theta} we have
	\begin{align*}
		\E[|\hat{\tau}-\theta|]&=\E[|\hat{\tau}-\theta|;A]+\E[|\hat{\tau}-\theta|;A^c]\\
		&\leq g^{-1}(1+\delta)\PP(\mathcal{K})+ T(1-\PP(\mathcal{K}))\leq  g^{-1}(1+\delta) +\ep T.
	\end{align*}
	This gives us the desired result.
\end{proof}
Proposition~\ref{prop:stat_1} provides guarantees for a single sample path. 
When $\theta$ is random and one has access to multiple independent sample paths, concentration inequalities yield the following finite-sample bound on the empirical risk.
\begin{corollary}\label{cor:stat_1-N}
Let us consider payoff $Y$ defined in \eqref{eq:Y1} and denote the $i$-th empirical payoff using stopping time $\tau$ to be $Y_{\tau^i}^i$. Suppose $\theta\in[0,T]$, and define $l^*$ to be the coefficient corresponding to the optimal stopping policy described in Section~\ref{sec:opt-stop}. More precisely,
$$
	l^*=\operatorname{\arg\inf}_{l \in T((\R^{1 + d})^*) } \E[Y_{\tau_l \wedge T}].
$$
Then for any $0<\delta, \ep<1$
$$
\E[Y_{\tau_{l^*}\wedge T}]\leq cg^{-1}(1+\delta)+\ep (cT+1),
$$
and 
$$
\PP\left(\frac{1}{R}\sum_{i=1}^{R}Y^i_{\tau_{l^*}^i\wedge T}\geq c_Y+cg^{-1}(1+\delta)+\ep (cT+1)\right)\leq\exp\left(-\frac{2Rc_Y^2}{(cT+1)^2}\right),
$$
for any $c_Y>0$.
\end{corollary}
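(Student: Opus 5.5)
The plan is to compare the optimal signature rule with the explicit detector $\hat\tau$ of Proposition~\ref{prop:stat_1}, and then treat the $R$ replications by Hoeffding's inequality. The detector $\hat\tau$ is itself of the form $\tau_{l}$ with $l=l^{(1)}$. Strictly, Definition~\ref{def:lin-sig} uses $\geq 1$ while $\hat\tau$ uses $>1$; I would either absorb this by rescaling $l^{(1)}$ slightly or rerun the argument of Proposition~\ref{prop:stat_1} with the non-strict inequality, which changes nothing there. Since $l^*$ minimizes $\E[Y_{\tau_l\wedge T}]$ over all $l$, we get
\[
\E[Y_{\tau_{l^*}\wedge T}]\le \E[Y_{\hat\tau\wedge T}].
\]
If the infimum is not attained, I would interpret $l^*$ as a near-minimizer; this costs an arbitrarily small additive term, which can be absorbed into $\ep$.

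For the bound on $\E[Y_{\hat\tau\wedge T}]$, recall $Y=Y^1$, so
\[
\E[Y_{\hat\tau}]=\PP(\hat\tau<\theta)+c\,\E[(\hat\tau-\theta)^+].
\]
Proposition~\ref{prop:stat_1} gives $\PP(\hat\tau<\theta)\le\ep$ and $\E[(\hat\tau-\theta)^+]\le g^{-1}(1+\delta)+\ep T$. Adding these yields
\[
\E[Y_{\tau_{l^*}\wedge T}]\le \ep + c\,g^{-1}(1+\delta)+c\ep T = c\,g^{-1}(1+\delta)+\ep(cT+1),
\]
which is the first claim. For a slightly sharper route, I would split on the good set $A=\{\widehat\X\in\mathcal K\}$ from Proposition~\ref{prop:stat_1}: on $A$ we have $Y\le c\,g^{-1}(1+\delta)$, and on $A^c$ we have $Y\le 1+cT$.

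For the concentration bound, I would note that $\theta\in[0,T]$ and stopping at or before $T$ force $0\le Y_{\tau\wedge T}\le \max(1,cT)\le cT+1$ almost surely. The replications $Y^i_{\tau^i_{l^*}\wedge T}$ are i.i.d.\ copies, since each uses the same deterministic $l^*$ on an independent sample $(\X^{(1),i},\X^{(2),i},\theta^i)$. Hoeffding's inequality then gives
\[
\PP\Bigl(\tfrac1R\textstyle\sum_{i=1}^R Y^i - \E[Y]\ge c_Y\Bigr)\le \exp\bigl(-2Rc_Y^2/(cT+1)^2\bigr).
\]
Replacing $\E[Y]$ by its upper bound from the first part can only enlarge the threshold, so the probability only decreases, and the stated inequality follows.

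The main obstacle is the identification step. One must check that $\hat\tau$ is admissible in the class over which $l^*$ optimizes, including the strict versus non-strict threshold and the existence or near-attainment of the minimizer. One must also confirm that the constants of Proposition~\ref{prop:stat_1} hold for this particular $\ep,\delta$ simultaneously. Everything else is bookkeeping.
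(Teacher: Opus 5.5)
Your proposal is correct and follows the paper's proof essentially verbatim. Like the paper, you bound $\E[Y_{\tau_{l^*}\wedge T}]$ by $\E[Y_{\hat\tau\wedge T}]=\PP(\hat\tau<\theta)+c\,\E[(\hat\tau-\theta)^+]$ via Proposition~\ref{prop:stat_1}, then apply Hoeffding's inequality with range $cT+1$ and enlarge the threshold. Your remarks on the strict versus non-strict threshold and on whether the infimum defining $l^*$ is attained address points the paper passes over silently, and you handle them adequately as described.
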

\begin{proof}
Since $l^*$ corresponds to the optimal stopping policy, it can only improves the expected payoff than $l^{(1)}$. Therefore, by Proposition~\ref{prop:stat_1} we have
\begin{equation*}
	\E[Y_{\tau_{l^*}\wedge T}]\leq \E[Y_{\tau_{l^{(1)}}\wedge T}]=\PP(\hat{\tau}<\theta)+c\E[(\hat{\tau}-\theta)^+]\leq cg^{-1}(1+\delta)+\ep (cT+1). 
\end{equation*}
Using the above inequality and applying Hoeffding's inequality we have
\begin{align*}
	&\PP\left(\frac{1}{R}\sum_{i=1}^{R}Y^i_{\tau_{l^*}^i\wedge T}\geq c_Y+cg^{-1}(1+\delta)+\ep (cT+1)\right)\\
	 &\qquad\qquad\qquad\leq\PP\left(\frac{1}{R}\sum_{i=1}^{R}Y^i_{\tau_{l^*}^i\wedge T}-\E[Y_{\tau_{l^*}\wedge T}]\geq c_Y\right)\leq \exp\left(-\frac{2Rc_Y^2}{(cT+1)^2}\right).
\end{align*}
\end{proof}
Corollary~\ref{cor:stat_1-N} shows that the empirical risk of the optimal signature stopping rule concentrates around its expectation at an exponential rate in the number of samples $R$, thus confirming the practical learnability of the stopping coefficient $l^\ast$.
\subsection{Repeated experiments with a common change-point}\label{sec:det-theta}

We now consider the setting in which $R$ independent replications of the experiment are available, all corresponding to the same realization of the change-point $\theta$. This is consistent with $\theta$ being random: recalling the product structure $\mathbb{P} = \mu_1 \otimes \mu_2 \otimes \nu$ of the probability space in 
Section~\ref{sec:QD-rough}, one conditions on a fixed realization of $\theta$ and draws $R$ independent pairs $(X^{(1),i}, X^{(2),i})$ from $\mu_1 \otimes \mu_2$, yielding $R$ independent observed paths $X^1, \ldots, X^R$ each constructed via~\eqref{eq:X} with the same change-point $\theta$. By aggregating information across replications, one can reduce both detection delay and false alarm probability simultaneously, at rates exponential in $R$.

Let $\theta$ be a fixed realization of the change-point, and let $X^1, \ldots, X^R$ be $R$ independent observed paths constructed as above. Denote the time-augmented signature of the $i$-th replication by $\widehat{X}^{i,<\infty}_{0,t}$.
We define the process 
$$
S_t=\1_{\{\sup_{s\in[0,t]}|\langle l^{(1)},\widehat{\X}^{<\infty}_{0,s}\rangle|>1\}}
$$ 
and its empirical average 
$$
\bar{S}_t^R=\frac{1}{R}\sum_{i=1}^R \1_{\{\sup_{s\in[0,t]} |\langle l^{(1)},\X_s^{i,<\infty}\rangle|>1\}}.
$$ 
For a constant $\alpha>0$ we consider the stopping time
\begin{equation}\label{eq:tilde-tau1}
		\tilde{\tau}=\inf \left\{t: \bar{S}_t^R>\alpha\right\}.
\end{equation}
\begin{proposition}\label{prop:stat_2}
Under the setting of Proposition~\ref{prop:l1-X-de-ep}, choose $\delta < 1$ and $\varepsilon \in (0,1/2)$. Let $\theta$ be any fixed realization of the change-point in $[0,T]$, and let $X^1,\ldots,X^R$ be $R$ independent observed paths sharing this common $\theta$. Then for $\varepsilon < \alpha < 1-\varepsilon$, the delay in detection for the stopping time $\tilde{\tau}$ 
defined in \eqref{eq:tilde-tau1} satisfies
\begin{equation}\label{eq:P-tilde-tau-theta1}
		\PP\left((\tilde{\tau}-\theta)^+\leq g^{-1}(1+\delta)\right)\geq1-\exp\left(-2R(1-\ep-\al)^2\right),
\end{equation}
and the false alarm probability
\begin{equation}\label{eq:false-P1}
		\PP\left(\tilde{\tau}<\theta\right)\leq \exp(-2R(\al-\ep)^2).
\end{equation}
Furthermore, the expected delay and error can be bounded by 
$$\E[(\tilde{\tau}-\theta)^+]\leq g^{-1}(1+\delta)+T\exp\left(-2R(1-\ep-\al)^2\right).$$
and
\begin{equation*}
	\E|\tilde{\tau}-\theta|\leq g^{-1}(1+\delta)+T{\left[\exp\left(-2R(1-\ep-\al)^2\right)+\exp(-2R(\al-\ep)^2)\right]}.
\end{equation*}
\end{proposition}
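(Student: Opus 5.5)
The plan is to reduce everything to Hoeffding's inequality for the i.i.d.\ Bernoulli indicators that make up $\bar S^R_t$. Conditioning on the fixed realization $\theta$, the replications are i.i.d.\ by the product structure $\mu_1\otimes\mu_2$. I will use the single-path guarantees from the proofs of Proposition~\ref{prop:l1-X-de-ep} and Proposition~\ref{prop:stat_1}. Those give a compact set $\mathcal K$ with $\PP(\mathcal K)\ge 1-\ep$, and the same $l^{(1)}$ for every replication, such that on $A_i=\{\widehat{\X}^i\in\mathcal K\}$ two things hold. First, $\sup_{s<\theta}|\langle l^{(1)},\widehat{\X}^{i,<\infty}_{0,s}\rangle|\le\delta<1$, since $\X^i$ agrees with $\X^{(1),i}$ before $\theta$. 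Second, $\sup_{s\le t}|\langle l^{(1)},\widehat{\X}^{i,<\infty}_{0,s}\rangle|\ge g((t-\theta)^+)-\delta$. It is important that $\mathcal K$ and $l^{(1)}$ depend only on the law and not on $i$, so the events $A_i$ are i.i.d.\ with probability at least $1-\ep$.

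\textbf{False alarm.} Fix $t<\theta$. Then $S^i_t\le \1_{A_i^c}$, so $\bar S^R_t\le \frac1R\sum_i\1_{A_i^c}$, which does not depend on $t$. Hence
\[
\{\tilde\tau<\theta\}\subset\Big\{\tfrac1R\sum_i\1_{A_i^c}>\al\Big\}.
\]
Since $\E\1_{A_i^c}\le\ep<\al$, Hoeffding gives a bound of $\exp(-2R(\al-\ep)^2)$, which is \eqref{eq:false-P1}. Using the uniform-in-$t$ event avoids any union bound over $t$.

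\textbf{Delay.} Set $t^*=\theta+g^{-1}(1+\delta)$; if $t^*\ge T$ the claim is trivial, since $\tilde\tau\wedge T$ is then within the window. Otherwise, on $A_i$ we have $g((t-\theta)^+)-\delta>1$ for $t$ slightly beyond $t^*$, using the definition of $g^{-1}$ and monotonicity. This gives $S^i_{t}=1$, so $\bar S^R_t\ge\frac1R\sum_i\1_{A_i}$. Now $\E\1_{A_i}\ge 1-\ep>\al$, so Hoeffding yields
\[
\PP\Big(\tfrac1R\sum_i\1_{A_i}\le\al\Big)\le\exp(-2R(1-\ep-\al)^2).
\]
On the complement, $\tilde\tau\le t^*$, which gives \eqref{eq:P-tilde-tau-theta1}.

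\textbf{Expectations.} Split $\E[(\tilde\tau-\theta)^+]$ over the good event, where the delay is at most $g^{-1}(1+\delta)$, and its complement, where the delay is at most $T$ (taking $\tilde\tau\wedge T$, as in Proposition~\ref{prop:stat_1}). For $\E|\tilde\tau-\theta|$, add $\E[(\tilde\tau-\theta)^-]\le T\,\PP(\tilde\tau<\theta)$ and use \eqref{eq:false-P1}.

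The main obstacle is the boundary issue at $g^{-1}(1+\delta)$. The strict inequality $>1$ in $S_t$ and the possible non-strict attainment of $g$ at $g^{-1}$ must be handled. I would treat this as in Proposition~\ref{prop:stat_1}: pass to $t\downarrow t^*$ and use right-continuity of the hitting-time infimum, or absorb it by slightly enlarging $\delta$.
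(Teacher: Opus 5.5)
Your proof is correct and follows the paper's route. Both arguments apply Hoeffding's inequality to i.i.d.\ Bernoulli averages, at $t_*=\theta+g^{-1}(1+\delta)$ for the delay and before $\theta$ for the false alarm, and then split the expectations over the good event and its complement. The one difference is which indicators you average: you use the $t$-independent indicators $\1_{A_i}$ of the compact-set events (the joint good event from inside the proof of Proposition~\ref{prop:l1-X-de-ep}), whereas the paper averages $\bar S^R_{t_*}$ and $\bar S^R_\theta$ directly, using the marginal bounds $\E[S_{t_*}]\ge 1-\ep$, $\E[S_\theta]\le\ep$ and the monotonicity of $\bar S^R_t$. Your version lets you handle the strict-threshold boundary at $t_*$, which the paper passes over. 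The cleanest fix there is to take the approximation tolerance of $l^{(1)}$ strictly below $\delta$ rather than enlarging $\delta$; this works even when $g$ is only non-strictly increasing.
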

\begin{proof}
We first prove for any deterministic $\theta$ in $[0,T]$. Let us define 
$$t_*=\theta+g^{-1}(1+\delta),$$
or equivalently
$$g\left((t_*-\theta)^{+}\right)=1+\delta.$$
Since $\delta<1$, by Proposition~\ref{prop:l1-X-de-ep} we have
\begin{equation}\label{eq:E-St*1}
	\E[S_{t_*}]\geq 1-\ep.
\end{equation}
From the definition of $\tilde{\tau}$ in \eqref{eq:tilde-tau1}, we know that
$$\{\bar{S}_{t_*}^R>\alpha\}\subseteq \{\tilde{\tau}\leq t_*\}.$$
Therefore 
\begin{equation}\label{eq:P-tau-t*-pre1}
	\PP\left(\tilde{\tau}> t_*\right)\leq \PP\left(\bar{S}_{t_*}^R\leq \alpha\right)=\PP\left(\E[S_{t_*}]-\bar{S}_{t_*}^R\geq \E[S_{t_*}]-\al\right).
\end{equation}
Using \eqref{eq:E-St*1} in \eqref{eq:P-tau-t*-pre1} and applying Hoeffding's inequality, we have
\begin{equation*}
	\PP\left(\tilde{\tau}> t_*\right)\leq \PP\left(\E[S_{t_*}]-\bar{S}_{t_*}^R\geq1-\ep-\al\right)\leq \exp\left(-2R(1-\ep-\al)^2\right).
\end{equation*}
Taking the complement we obtain \eqref{eq:P-tilde-tau-theta1}. Since $\bar{S}_{t}^R$ is non-decreasing in $t$ we have
$$ \{\tilde{\tau}<\theta\}\subseteq\{\bar{S}_{\theta}^R>\alpha\}.$$
Therefore
\begin{equation}\label{eq:P-tau-t*-pre11}
	\PP\left(\tilde{\tau}< \theta\right)\leq \PP\left(\bar{S}_{t_*}^R>\alpha\right)=\PP\left(\bar{S}_{\theta}^R-\E[S_{\theta}]\geq \al-\E[S_{\theta}]\right).
\end{equation}
Since $\X$ coincides with $\X^{(1)}$ for $t\leq\theta$, by \eqref{eq:P-l1-X1} we have
\begin{equation}\label{eq:ES_theta}
	\E[S_{\theta}]=\PP\left(\sup_{t\leq\theta}|\langle l^{(1)},\widehat{\X}^{(1),<\infty}_{0,t}\rangle| >1\right)\leq \ep
\end{equation}
Using \eqref{eq:ES_theta} in \eqref{eq:P-tau-t*-pre11} and applying Hoeffding's inequality, we have
\begin{equation*}
	\PP\left(\tilde{\tau}< \theta\right)\leq\PP\left(\bar{S}_{\theta}^R-\E[S_{\theta}]\geq \al-\ep\right)\leq \exp(-2R(\al-\ep)^2).
\end{equation*}
This give us the false alarm probability bound \eqref{eq:false-P1}. 

Considering the event that $(\tilde{\tau}-\theta)^+\leq g^{-1}(1+\delta)$ and its complement, by \eqref{eq:P-tilde-tau-theta1} and $(\tilde{\tau}-\theta)^+\leq T$ we have
$$\E[(\tilde{\tau}-\theta)^+]\leq g^{-1}(1+\delta)+T\exp\left(-2R(1-\ep-\al)^2\right).$$
Similarly, since $|\tilde{\tau}-\theta|\leq g^{-1}(1+\delta)$ on the event $\{\tilde{\tau}\geq\theta\}\cap\{\tilde{\tau}\leq t_*\}$ and $|\tilde{\tau}-\theta|\leq T$ otherwise, by \eqref{eq:P-tilde-tau-theta1} and \eqref{eq:false-P1} we obtain
\begin{equation*}
	\E|\tilde{\tau}-\theta|\leq g^{-1}(1+\delta)+T{\left[\exp\left(-2R(1-\ep-\al)^2\right)+\exp(-2R(\al-\ep)^2)\right]}.
\end{equation*}
Since the bounds hold for every fixed $\theta \in [0,T]$, they hold unconditionally as well.
This completes the proof.
	\end{proof}

Proposition~\ref{prop:stat_2} shows that, conditionally on any fixed realization of $\theta$, aggregating $R$ independent replications drives both the false alarm probability and the detection delay bound to their theoretical minimum at rates exponential in $R$.The empirical behavior of $\tilde{\tau}$ is examined in Section~\ref{sec:numeric}.

\section{Adversarial quickest detection}\label{sec:adv}
The framework of Sections~\ref{sec:QD-rough} and~\ref{sec:detector} assumes that the laws $\mu_1$ and $\mu_2$ of the pre-change and post-change processes are precisely known. In practice, however, the true 
data-generating distributions are rarely specified exactly, and the detection rule may be required to perform well across a range of plausible models. 

\subsection{Robust formulation}
We now formulate an adversarial or distributionally robust version of the quickest detection problem in which both the pre-change and post-change models are allowed to range over prescribed uncertainty classes $\mathcal{P}_1$ and $\mathcal{P}_2$.
The robust quickest detection problem is then formulated as a minimax problem: one seeks a stopping rule that performs well against the worst admissible pair of pre-change and post-change laws.

More precisely, for a stopping time $\tau$ adapted to the filtration generated by $\X$, we consider
$$
J^{\mathrm{rob}}:=\inf_{\tau\in\mathcal{S}}\sup_{P_1\in\mathcal{P}_1,\;P_2\in\mathcal{P}_2}\mathbb{E}^{P_1,P_2}\bigl[Y_{\tau\wedge T}\bigr],
$$
where $Y$ is a prescribed loss process. Here $\mathbb{E}^{P_1,P_2}[\cdot]$ denotes expectation under the measure $P_1 \otimes P_2 \otimes \nu$ on the product space $\bar{\Omega}$, where $\nu$ is the fixed prior on the change-point $\theta$. A natural choice for $Y$ is the loss process $Y$ defined in~\eqref{eq:Y1}, which penalizes both false alarms and delayed detection. In this case the above minimax problem may be written equivalently as
$$
J^{\mathrm{rob}}=\inf_{\tau\in\mathcal{S}}\sup_{P_1\in\mathcal{P}_1,\;P_2\in\mathcal{P}_2}\left\{\mathbb{P}^{P_1,P_2}(\tau<\theta)+c\,\mathbb{E}^{P_1,P_2}\bigl[(\tau-\theta)^+\bigr]\right\}.
$$
Thus, the robust objective balances false alarm probability and detection delay under the least favorable admissible model pair. A similar adaptation for $Y$ defined as in \eqref{eq:Y2} is:
$$
J^{\mathrm{rob}}=\inf_{\tau\in\mathcal{S}}\sup_{P_1\in\mathcal{P}_1,\;P_2\in\mathcal{P}_2}\left\{a\,\mathbb{E}^{P_1,P_2}\bigl[(\tau-\theta)^-\bigr]+b\,\mathbb{E}^{P_1,P_2}\bigl[(\tau-\theta)^+\bigr]\right\}.
$$
From Proposition~\ref{thm:signature-stopping} we know that the optimal stopping policy for a single model of pre- and post-change laws (non-robust problem) is the hitting time of linear signature of a half space:
$$
\tau_l:=\inf\left\{t\in[0,T]:\left|\langle l,\widehat{\X}_{0,t}^{<\infty}\rangle\right|\geq 1\right\}.
$$
\subsection{Reduction to the least favorable model}
\begin{assumption}\label{ass:LFD}
There exists a least favorable pair of laws 
$(P_1^*, P_2^*2) \in \mathcal{P}_1 \times \mathcal{P}_2$ such that the minimax and min problems coincide:
$$
\inf_{\tau\in\mathcal{S}}\sup_{P_1\in\mathcal{P}_1, 
P_2\in\mathcal{P}_2}\mathbb{E}^{P_1,P_2}[Y_{\tau\wedge T}] = \inf_{\tau\in\mathcal{S}}\mathbb{E}^{P^*_1,P^*_2}[Y_{\tau\wedge T}].
$$
\end{assumption}
Assumption~\ref{ass:LFD} is the rough path analogue of the classical least favorable distribution condition in robust hypothesis testing \cite{Unnikrishnan2011}. It holds, for instance, when $\mathcal{P}_1$ and $\mathcal{P}_2$ are weakly compact and the functional $(P_1, P_2) \mapsto \inf_{\tau}\mathbb{E}^{P_1,P_2}[Y_{\tau\wedge T}]$ is upper semicontinuous, as is the case for the Wasserstein ambiguity sets of Example~\ref{ex:wass-ambiguity}.

Under Assumption~\ref{ass:LFD}, the robust problem reduces to an ordinary optimal stopping problem under the least favorable model $(P_1^*, P_2^*)$. Applying Proposition~\ref{thm:signature-stopping} to this reduced problem yields the robust signature-based stopping policy:
$$
\inf_{l \in T((\R^{1 + d})^*) }\mathbb{E}^{P_1^*,P_2^*}\bigl[Y_{\tau_l\wedge T}\bigr]=\inf_{\tau\in\mathcal{S}}\sup_{P_1\in\mathcal{P}_1,\;P_2\in\mathcal{P}_2}\mathbb{E}^{P_1,P_2}\bigl[Y_{\tau\wedge T}\bigr].
$$
In this form, the stopping policy is chosen so as to minimize the worst case detection risk over all admissible pre-change and post-change laws. 

\begin{remark}
When Assumption~\ref{ass:LFD} fails, a saddle point for the minimax problem need not exist, and the optimal robust stopping rule need not take the form of a signature half-space hitting time. Nevertheless, the class of stopping rules $\{\tau_l : l \in T((\mathbb{R}^{1+d})^*)\}$ remains a natural and tractable family of candidate policies: it is rich enough to approximate any continuous stopping policy by Lemma~\ref{lem:sig_dense}, and amenable to numerical optimization by the zeroth-order methods of Section~\ref{sec:numeric}.
\end{remark}
\subsection{Examples of uncertainty classes}
We now describe two concrete instances of the uncertainty 
classes $\mathcal{P}_1$ and $\mathcal{P}_2$. The first 
uses the homogeneous $p$-variation distance on rough path 
space to define Wasserstein-type ambiguity sets, providing a quantitative distance between rough path laws that is intrinsic to the geometry of the space.
\begin{example}\label{ex:wass-ambiguity}
	In the present work, we keep the classes $\mathcal{P}_1$ and $\mathcal{P}_2$ general. This allows the robust formulation to cover a broad range of model uncertainty without committing to a particular metric structure. However, we want to mention that a more structured formulation is also possible. For instance, one may specify $\mathcal{P}_1$ and $\mathcal{P}_2$ as Wasserstein-type ambiguity sets around nominal pre-change and post-change laws, where the transportation cost is induced by the homogeneous $p$-variation. More precisely, define the Wasserstein distance of order $s$ to be:
	$$
	W_{s}^{CC}(P,Q):=\inf_{\pi\in\Pi(P,Q)}\int d_{p-\mathrm{var};[0,T]}(\mathbf{x},\mathbf{y})^s\pi(d\mathbf{x},d\mathbf{y}).
	$$
    Here the cost $d_{p\text{-var};[0,T]}(x,y)$ is the homogeneous $p$-variation distance of Definition~\ref{def:d-p-var-CC}, which metrizes the rough path topology and is therefore the natural choice of transport cost on $\Omega^p_T$.
	Then one can define the ambiguity sets for the pair of law $(P_{1}^{\circ},P_{2}^{\circ})$ to be:
	$$
	\mathcal{P}_{1}=\{P:W_{s}^{CC}(P,P_{1}^{\circ})\leq c_{1}\},\quad\mathcal{P}_{2}=\{P:W_{s}^{CC}(P,P_{2}^{\circ})\leq c_{2}\}.
	$$
	Such a construction provides a quantitative notion of closeness between rough path laws. 
\end{example}

\begin{example}
We consider a setting in which the pre-change model is precisely known but the post-change drift is uncertain, a common situation in signal processing and finance where the noise structure is well characterized but the magnitude of a potential shift is not. A simple example is obtained by taking the pre-change model class to be a singleton and the post-change model class to consist of drift perturbations of the same noise. For Brownian motion, we let
	$$
	\mathcal{P}_1=\{P^{(0)}\},\qquad\mathcal{P}_2=\{P^{(r)}: r\in\mathcal{R}\},
	$$
	where $P^{(0)}$ is the law of
	$$
	X_t^{(1)}=\sigma B_t,
	$$
	and $P^{(r)}$ is the law of
	$$
	X_t^{(2,r)}=rt+\sigma B_t.
	$$
	Here $\mathcal{R}$ is a prescribed set of admissible drift values. Thus, the pre-change model is fixed, whereas the post-change model is uncertain through the drift parameter $r$. The corresponding rough paths are given by the lifts of $X^{(1)}$ and $X^{(2,r)}$, and the observed rough path is obtained by concatenating the pre-change lift with the post-change lift at time $\theta$ as in \eqref{eq:X}.
	
	An analogous example may be formulated with fractional Brownian motion in place of Brownian motion. In that case, one considers
	$$
	X_t^{(1)}=\sigma B_t^H,
	\qquad
	X_t^{(2,r)}=rt+\sigma B_t^H,
	$$
	together with the corresponding rough path lifts. This again yields a robust quickest detection problem in which the post-change regime is uncertain through the drift parameter, while the driving noise remains unchanged.
\end{example}

\begin{example}\label{ex:adversarial-TV}
A practically important instance of model uncertainty arises from adversarial path perturbations. Let $X^{(1)}$ and $X^{(2)}$ be the pre-change and post-change processes, and let the observed path be $Z_t = X_t + w_t$, where $X$ is the concatenated process defined in~\eqref{eq:X} and $w:[0,T]\to\mathbb{R}^d$ is an adversarial perturbation with $w_0=0$. We model uncertainty in the post-change regime by taking the pre-change class to be a singleton $\mathcal{P}_1 = \{P^{(0)}\}$, the law of the unperturbed pre-change path $X^{(1)}$, and the post-change class to be
$$
\mathcal{P}_2 = \left\{P^{(w)} : w \in \mathcal{W}\right\},
$$
where $P^{(w)}$ is the law of $X^{(2)} + w$ and $\mathcal{W}$ is a prescribed class of admissible perturbations. A natural and tractable choice is the total variation budget class
$$
\mathcal{W} := \left\{w:[0,T]\to\mathbb{R}^d : 
w_0 = 0,\; \|w\|_{TV;[0,T]} \leq C_{TV}\right\},
$$
for a fixed constant $C_{TV} > 0$. Every $w\in\mathcal{W}$ has bounded variation and therefore admits a canonical geometric rough path lift, so that the perturbed post-change path $X^{(2)} + w$ remains in $\Omega^p_T$ for $p \in (2,3)$. The robust quickest detection problem then becomes
$$
J^{rob} = \inf_{\tau\in\mathcal{S}}\sup_{w\in\mathcal{W}} \mathbb{E}^w\left[Y_{\tau\wedge T}\right].
$$
\end{example}

\section{Numerical experiments}\label{sec:numeric}
We now describe the numerical implementation of the signature-based stopping rules developed in Sections~\ref{sec:QD-rough}-\ref{sec:adv} and report numerical experiments for both the nominal and adversarial quickest detection problems. Throughout, we work with the model of Example~\ref{ex:BM} and Example~\ref{ex:fBM}, %driven by fractional Brownian motion with Hurst parameter $H \in (1/3,1/2)$, 
and optimize the signature coefficient $l$ via zeroth-order methods. We compare the proposed stopping rules against classical baselines and examine the trade-off between detection delay and false alarm probability as the loss parameters are varied.
\subsection{Time discretization and loss function}
For numerical implementation, we discretize the time interval $[0,T]$ on a uniform grid
$$
0=t_0<t_1<\cdots<t_K=T, \qquad t_k=k\Delta t, \qquad \Delta t=T/K.
$$
Throughout this section, we retain for brevity $X$, $\hat{X}$, and $Y$ for the piecewise linear interpolations of the respective processes on this grid. We also use the shorthand
$$
X_k := X_{t_k},
$$
and similarly for $\widehat{X}_k$ and $Y_k$. By Proposition~\ref{thm:signature-stopping}, it is sufficient to restrict attention to the class of linear signature stopping policies introduced in Definition~\ref{def:lin-sig}. In practice, we compute the signature up to a finite truncation level $N$. For a signature coefficient $l \in T^{(N)}((\mathbb{R}^{1+d})^*)$, we define the discretized and truncated stopping rule by
\begin{equation}\label{eq:discrete_stopping_time}
	\tau_l^K := \inf\left\{0\le k\le K \;\middle|\; |\langle l, \widehat{\mathbb{X}}^{\le N}_{[0,t_k]} \rangle| \ge 1 \right\} \wedge K,
\end{equation}
where $\inf\varnothing := \infty$ by convention, capped at the terminal index $K$. The unsigned threshold $|\langle l, \hat{X}^{\leq N}_{[0,t_k]}\rangle| \geq 1$ 
in~\eqref{eq:discrete_stopping_time} is consistent with Definition~\ref{def:lin-sig}: in 
practice one may also use the one-sided rule $\tau^K_l := \inf\{k : \langle l, \hat{X}^{\leq N}_{[0,t_k]}\rangle \geq 1\}\wedge K$, which is appropriate when the sign of the post-change drift is known a priori. Let
$$
\bigl(\widehat{X}_k^{m},Y_k^{m}\bigr)_{0\le k\le K,\;1\le m\le M},\qquad \text{where } \widehat{X}_k^{m}\in\mathbb{R}^{d+1}, \quad Y_k^{m}\in\mathbb{R},
$$
denote a batch of $M$ simulated samples, with the $m$-th sample associated with change-point $\theta^m$. For each sample path, we compute the truncated signature on $[0,t_k]$ for every $k=0,\dots,K$. We denote these truncated signatures by
$$
\bigl(\widehat{\mathbb{X}}_k^{m}\bigr)_{0\le k\le K,\;1\le m\le M} := \bigl(\widehat{\mathbb{X}}^{m,\le N}_{[0,t_k]}\bigr)_{0\le k\le K,\;1\le m\le M}.
$$
We then use the averaged empirical risk as the training objective, namely
\begin{equation}\label{eq:emp-risk}
	 J(l) := \frac{1}{M}\sum_{m=1}^M Y^m_{\tau_l^K},
\end{equation}
where the optimization is over the signature coefficient $l \in T^{(N)}((\mathbb{R}^{1+d})^*)$, whose dimension grows as $\sum_{k=0}^{N}(1+d)^k$ with the truncation 
level $N$. The non-smoothness of $J(l)$ with respect to $l$, arising from the discrete threshold crossing in~\eqref{eq:discrete_stopping_time}, necessitates the zeroth-order optimization methods described in Section~\ref{sec:zo}.
\subsection{Simulation of signature}
To compute the truncated signatures of our simulated paths, we adopt the standard procedure of calculating the exact signature of their piecewise linear interpolations. For our numerical experiments, this involves generating joint discrete-time trajectories of the time-augmented process $\widehat{X}$ and the corresponding variable $Y$ on our grid. The exact truncated signature of this linearly interpolated path can then be reliably computed (up to standard floating-point precision) using the \texttt{iisignature} library in Python \cite{iisignature}. A key practical advantage for our stopping rule is that we require the signature at every discrete time step $t_k$. Instead of recalculating the signature from scratch for each subinterval $[0, t_k]$, we can significantly reduce computational overhead by incrementally updating the signature from the preceding interval $[0, t_{k-1}]$ using the path increment over $[t_{k-1}, t_k]$. The \texttt{iisignature} package handles these sequential updates highly efficiently. 

With the signature computation in place, we now describe the zeroth-order optimization procedure used to learn the stopping coefficient $\hat{l}$.
\subsection{Zeroth-order method}\label{sec:zo}
The objective $J(l)$ defined in~\eqref{eq:emp-risk} is non-differentiable with respect to $l$, since $\tau^K_l$ is defined through a discrete threshold crossing that introduces discontinuities. Consequently, exact gradients are analytically unavailable, rendering standard gradient descent methods inapplicable. This necessitates the use of zeroth-order (ZO) optimization. Let $n = \dim(T^{(N)}((\mathbb{R}^{1+d})^*))$ denote the dimension of the parameter space.

A foundational approach to derivative-free optimization is the ZO method based on Gaussian smoothing, extensively analyzed by Nesterov and Spokoiny \cite{Nesterov2015}. Instead of optimizing the non-smooth $J(l)$ directly, this method minimizes a smoothed surrogate $J_{\eta}(l) = \mathbb{E}_u[J(l + \eta u)]$, where $u$ is a realization of random variable $U \sim N(0, I_n)$ and $\eta > 0$ is a smoothing parameter. The gradient is typically estimated via random two-point evaluations:
\begin{equation*}
	\hat{g}_{\eta}(l) = \frac{J(l + \eta u) - J(l - \eta u)}{2\eta} u.
\end{equation*}
This estimator is straightforward to implement and requires only two function evaluations per iteration. However, its second moment scales quadratically with the parameter dimension $n$, yielding a worst-case iteration complexity of $\mathcal{O}(n^2\epsilon^{-2})$. For moderate or large truncation levels $N$, where $n = \sum_{k=0}^{N}(1+d)^k$ grows rapidly, this dimension dependence is computationally prohibitive.

An alternative ZO method is to employ the exponentially-shifted Gaussian smoothing (esGS) estimator proposed by \cite{Wang2024}. For random variables $V \sim \text{Exp}(1)$ and $Z \sim N(0, \eta^2 I_n)$ with realization $v,z$, the esGS gradient $\hat{g}_\eta=(\hat{g}_{\eta}^1,\cdots,\hat{g}_{\eta}^n)$ is estimated coordinate-wise:
\begin{equation}\label{eq:esgs}
	\hat{g}_{\eta}^i(l, v, z) = \frac{1}{\eta\sqrt{2\pi}} \left[ J(l_i + \eta\sqrt{2v}, l^{-i} - z^{-i}) - J(l_i - \eta\sqrt{2v}, l^{-i} - z^{-i}) \right],
\end{equation}
where $l_i$ denotes the $i$-th component of $l$ and $l^{-i}$ denotes the remaining components (and $z^{-i}$ likewise).By shifting the evaluation points via exponential random variables, the esGS estimator reduces the second moment bound to $\mathcal{O}(L_0^2n)$. This structural modification significantly improves the overall ZO iteration complexity to $\mathcal{O}(n\epsilon^{-2})$. 

Other zeroth-order techniques exist in the literature, such as Simultaneous Perturbation Stochastic Approximation (SPSA) \cite{Spall1992}, which relies on finite differences without convolution based smoothing, or methods leveraging spherical smoothing \cite{Cui2022}. Comparing their empirical performance, we restrict our focus to the two Gaussian smoothing-based methods discussed above.

In all experiments below, we use the esGS estimator on account of its improved dimension dependence. We now report numerical results, beginning with the ordinary quickest detection problem.

\subsection{Comparison with baseline models for SDE}
We present numerical results for the signature-based stopping rules trained by zeroth-order optimization. As esGS and standard Gaussian smoothing exhibited broadly similar empirical performance, we report only the esGS results for brevity. We implement our algorithm to the \textit{classical quickest detection problem}:
\begin{equation*}
	dX_t^{(1)}=\sigma dB_t, \quad dX_t^{(2)}=r dt +\sigma dB_t,
\end{equation*}
where we choose $\sigma=1,r=3$ and truncation level $N=4$, and the change-point $\theta\sim \text{Exp}(0.7)$. In the following Table~\ref{tab:bayes_comparison} we compare the performance of the signature-based stopping rules trained under the loss functions $Y^1$ and $Y^2$ defined in \eqref{eq:Y1} and \eqref{eq:Y2} with three benchmark procedures: GLR-CUSUM \cite{SiegmundVenkatraman1995} (unknown post-change drift), CUSUM \cite{Moustakides2004} (known model), and Shiryaev \cite{Shiryaev2010} (known model). We calibrate the thresholds for the baseline models such that $\PP(\tau<\theta)\approx 30\%$. For each policy, we report the empirical risk $\mathbb{E}[Y_{\tau\wedge T}^i]$ for $i=1,2$, the expected delay $\mathbb{E}[(\tau-\theta)^+]$, and the false alarm probability $\mathbb{P}(\tau<\theta)$. We are also interested in the false alarm probability $\mathbb{P}_{\infty}(\tau<T)$ and the truncated expected alarm time $\mathbb{E}_{\infty}[\tau\wedge T]$ under the no change scenario on the entire interval $[0,T]$. The truncation by $T$ appears because in our implementation we set $\tau=T$ whenever no alarm occurs before the terminal time.

\begin{table}[H]
	\centering
    \tiny
	\begin{tabular}{lccccccc}
		\toprule
		Model & $\mathbb{E}[Y_{\tau\wedge T}^{1}]$ & $\mathbb{E}[Y_{\tau\wedge T}^{2}]$ & $\mathbb{E}[(\tau-\theta)^+]$ & $\mathbb{P}(\tau<\theta)$ & $\mathbb{E}[\tau]$ & $\mathbb{P}_{\infty}(\tau<T)$ & $\mathbb{E}_{\infty}[\tau\wedge T]$ \\
		\midrule
		Signature ($Y^1$) & 0.5732 & 0.7049 & 0.2722 & 0.3010 & 1.2763 & 0.9917 & 2.4659 \\
		Signature ($Y^2$) & 0.5891 & 0.6740 & 0.3699 & 0.2192 & 1.5026 & 0.9287 & 3.6534 \\
		GLR-CUSUM & 0.7176 & 0.8303 & 0.4299 & 0.2878 & 1.4664 & 0.9607 & 3.1212 \\
		CUSUM & 0.5759 & 0.6946 & 0.2802 & 0.2958 & 1.3027 & 0.9683 & 2.9697 \\
		Shiryaev & 0.5559 & 0.6760 & 0.2654 & 0.2905 & 1.2917 & 0.9840 & 2.7294 \\
		\bottomrule
	\end{tabular}
	\caption{Performance of the five quickest-detection methods.}
	\label{tab:bayes_comparison}
\end{table}
Table~\ref{tab:bayes_comparison} shows that each signature-based rule performs surprisingly well under the loss function used for its training. The rule trained with $Y^1$ achieves performance comparable to known-model CUSUM and Shiryaev, whereas the rule trained with $Y^2$ attains the lowest empirical $Y^2$-risk. Shiryaev achieves the lowest $Y^1$-risk and shortest delay, as expected from its use of the known model and its theoretical optimality. By contrast, GLR-CUSUM, which does not know the post-change drift, exhibits the largest risks and detection delay.

\begin{figure}[h]
	\centering
	\includegraphics[width=0.5\linewidth]{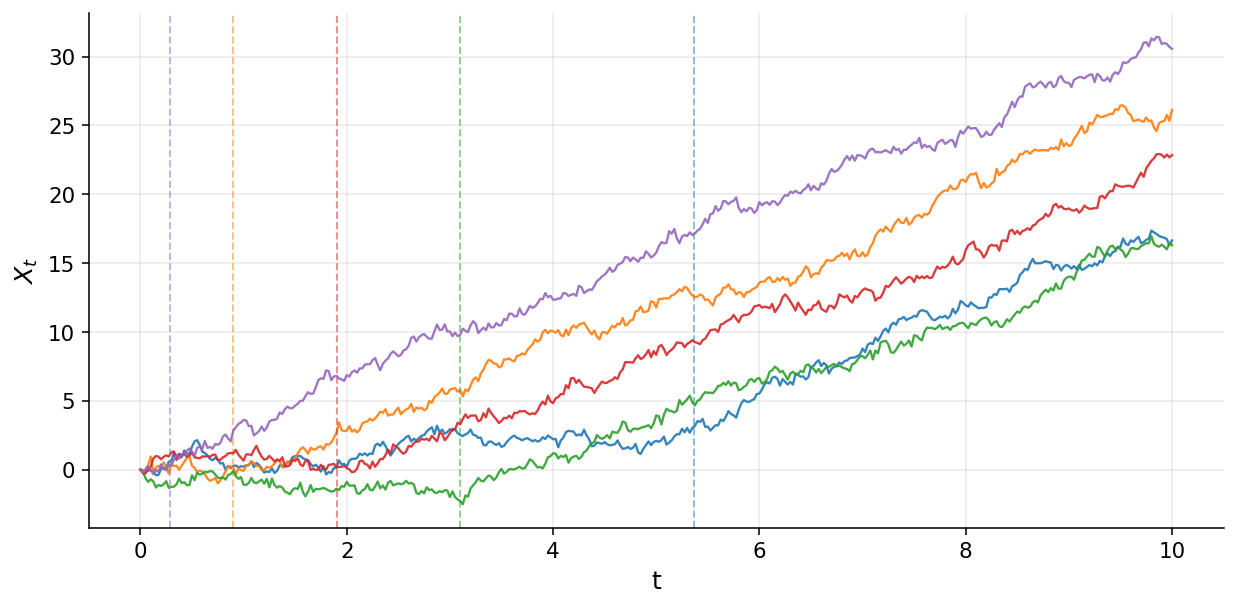}
	\caption{Path examples with $\theta\sim\text{Exp}(0.5)$}
	\label{fig:path-exp}
\end{figure}

\subsection{Quickest detection for rough paths}
Classical methods such as CUSUM and Shiryaev are most tractable when the likelihood ratio is explicitly available, as in Brownian diffusion models with known parameters. Extending quickest detection methods to rough path and general change time distributions is substantially more challenging. This work provides a way to address this gap. We implement our algorithm to the Example~\ref{ex:fBM}:
	\begin{equation*}
	dX_t^{(1)}=\sigma dB_t^H, \quad dX_t^{(2)}=r dt +\sigma dB_t^H,
\end{equation*}
where we choose $\sigma=1,r=3, H=0.35$ and truncation level $N=4$. Figure~\ref{fig:path-exp} provides some examples of the path, where the vertical dashed lines indicate the corresponding change-points $\theta$. 

 To illustrate the behavior of the learned stopping rules, we present a scatter plot of $\tau$ versus $\theta$, a histogram of $\tau-\theta$, and the empirical density of $\tau$ together with the true density of $\theta$. Figure~\ref{fig:Y1_exp} shows the results for $\theta^{\text{Exp}} \sim \mathrm{Exp}(\lambda)$ with payoff $Y^1$ given by \eqref{eq:Y1}, while Figure~\ref{fig:Y1_wei} shows the corresponding results for $\theta^{\text{Wei}} \sim \mathrm{Weibull}(1/\lambda\Gamma(1+1/k),k)$ with the same payoff, using $\lambda=0.7,k=2$ and $c=1$.
\begin{figure}[H]
	\centering
	\begin{minipage}[c]{0.23\linewidth}
		\centering
		\includegraphics[width=\linewidth]{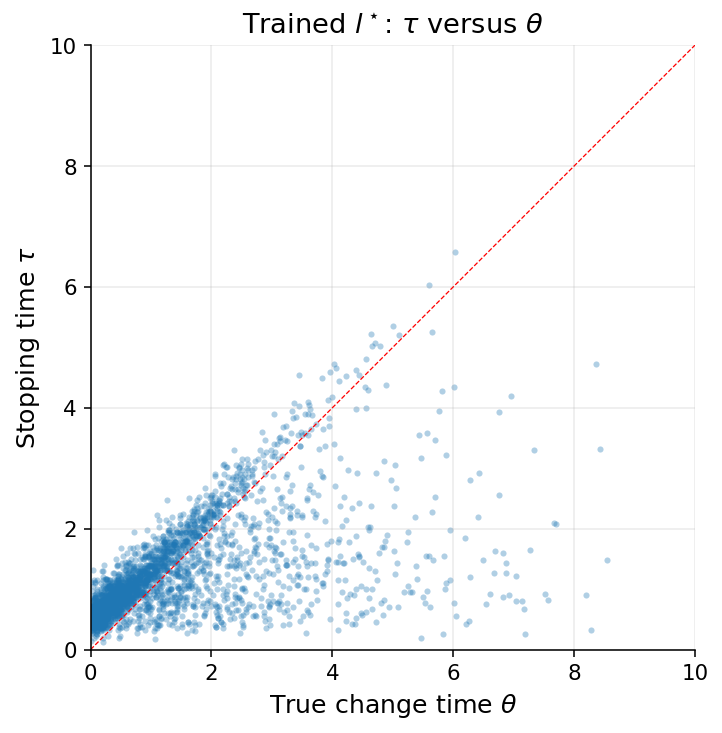}
	\end{minipage}
	\hfill
	\begin{minipage}[c]{0.36\linewidth}
		\centering
		\includegraphics[width=\linewidth]{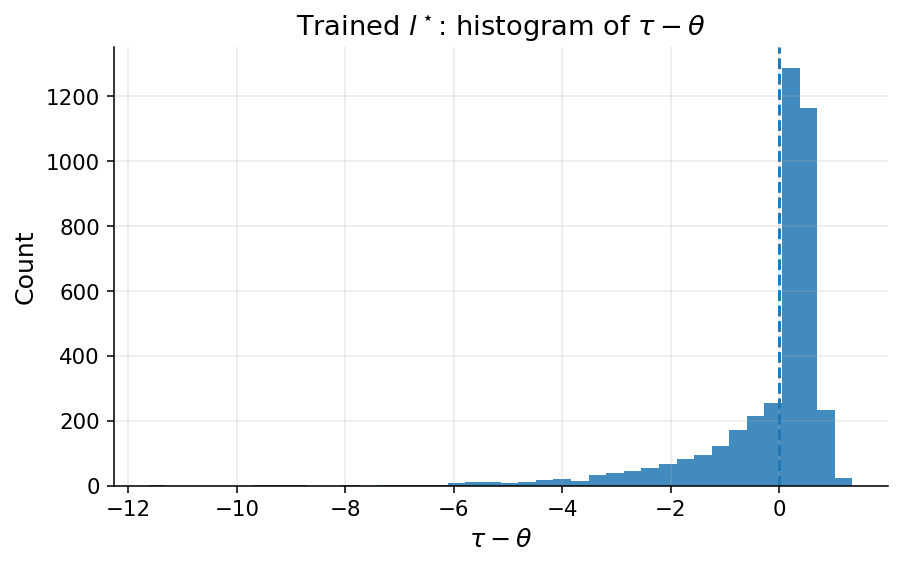}
	\end{minipage}
		\hfill
	\begin{minipage}[c]{0.36\linewidth}
		\centering
		\includegraphics[width=\linewidth]{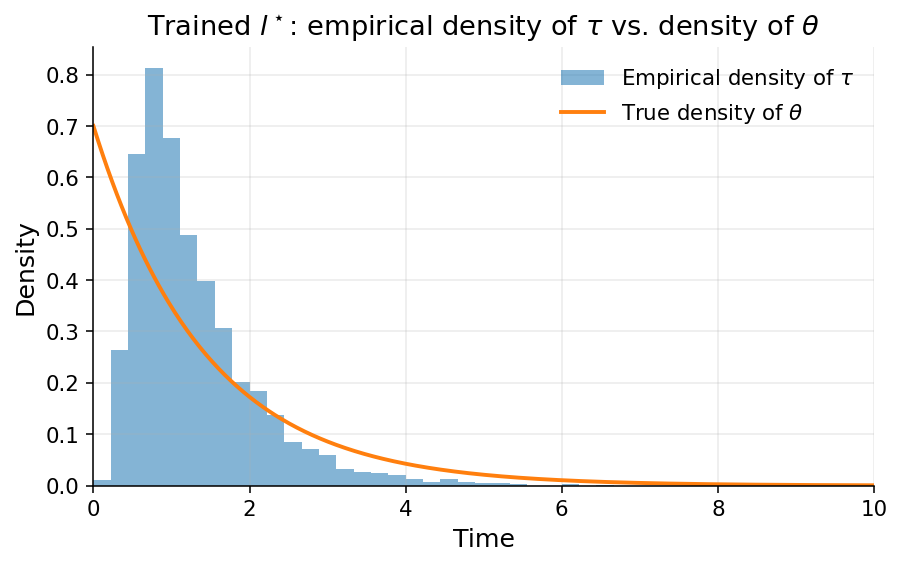}
	\end{minipage}
	\caption{Change-point $\theta^{\text{Exp}}$ and payoff $Y^1$.}
	\label{fig:Y1_exp}
\end{figure}
\begin{figure}[H]
	\centering
	\begin{minipage}[c]{0.23\linewidth}
		\centering
		\includegraphics[width=\linewidth]{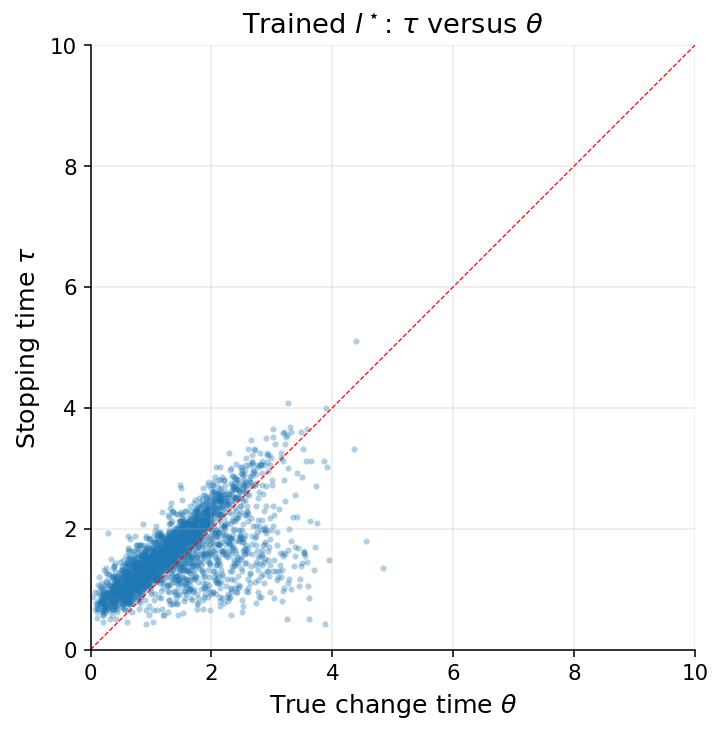}
	\end{minipage}
	\hfill
	\begin{minipage}[c]{0.36\linewidth}
		\centering
		\includegraphics[width=\linewidth]{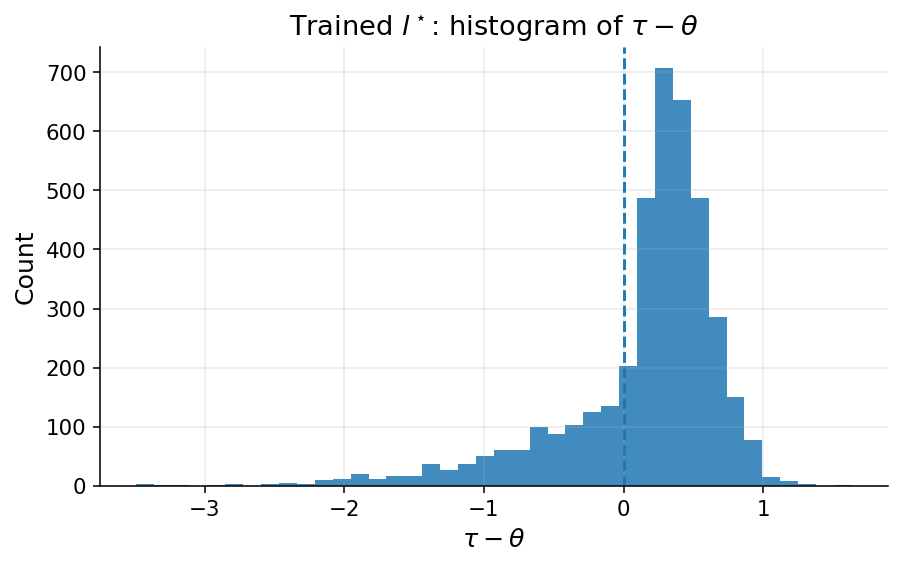}
	\end{minipage}
		\hfill
	\begin{minipage}[c]{0.36\linewidth}
		\centering
		\includegraphics[width=\linewidth]{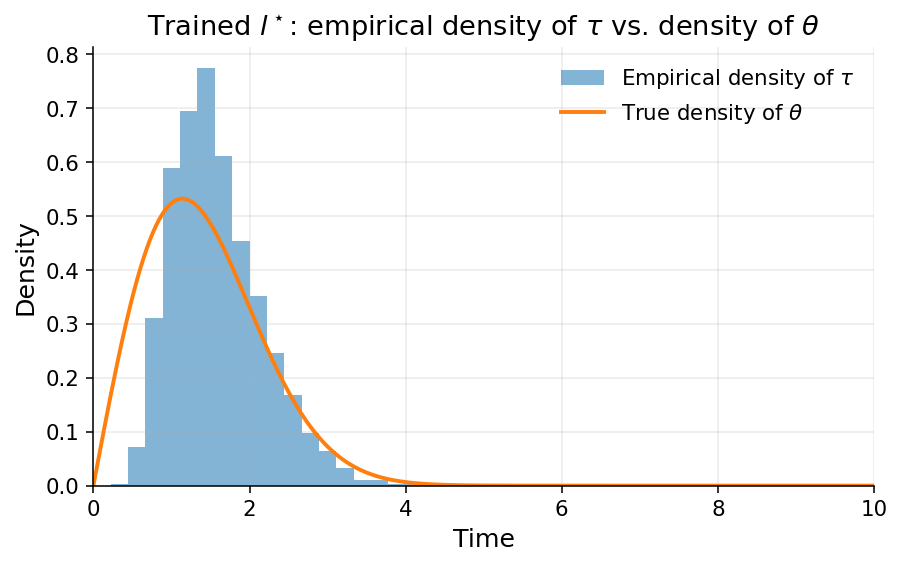}
	\end{minipage}
	\caption{Change-point $\theta^{\text{Wei}}$ and payoff $Y^1$.}
	\label{fig:Y1_wei}
\end{figure}

From Figure~\ref{fig:Y1_exp} and Figure~\ref{fig:Y1_wei}, we observe that the learned stopping rules reflect the overall structure of the change-time distribution. In both cases, the scatter plots display a clear positive association between $\tau$ and $\theta$, suggesting that the learned rule responds to later changes by stopping later on average. However, many samples fall below the diagonal line $\tau=\theta$, contributes around $\PP(\tau<\theta)\approx 30\%$ false alarm under the payoff $Y^1$ in \eqref{eq:Y1} with $c=1$. The histograms of $\tau-\theta$ further support this observation, as much of the mass is concentrated near or below zero. In addition, the empirical density of $\tau$ has a shape similar to the true density of $\theta$, though with some discrepancy due to the delayed nature of the stopping rule. Relative to the exponential case, the Weibull case appears more concentrated and exhibits less extreme variability. The following Figure~\ref{fig:Y2_exp} and Figure~\ref{fig:Y2_wei} show the similar pattern for the signature stopping rule trained by payoff $Y^2$.
\begin{figure}[H]
	\centering
	\begin{minipage}[c]{0.23\linewidth}
		\centering
		\includegraphics[width=\linewidth]{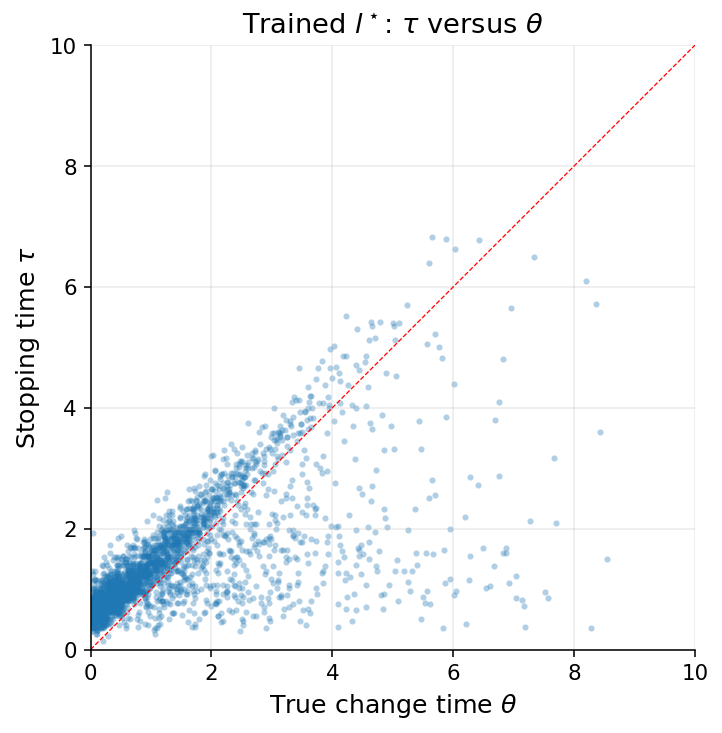}
	\end{minipage}
	\hfill
	\begin{minipage}[c]{0.36\linewidth}
		\centering
		\includegraphics[width=\linewidth]{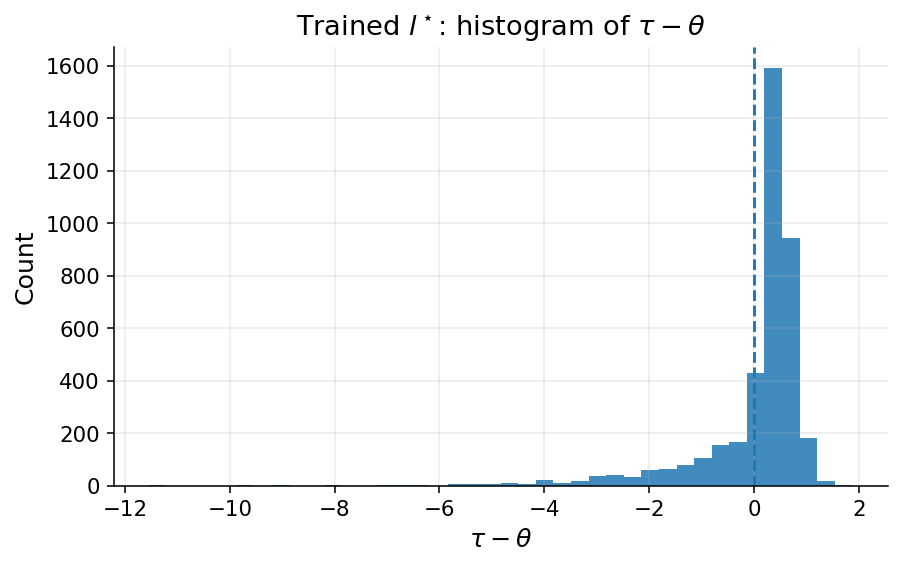}
	\end{minipage}
		\hfill
	\begin{minipage}[c]{0.36\linewidth}
		\centering
		\includegraphics[width=\linewidth]{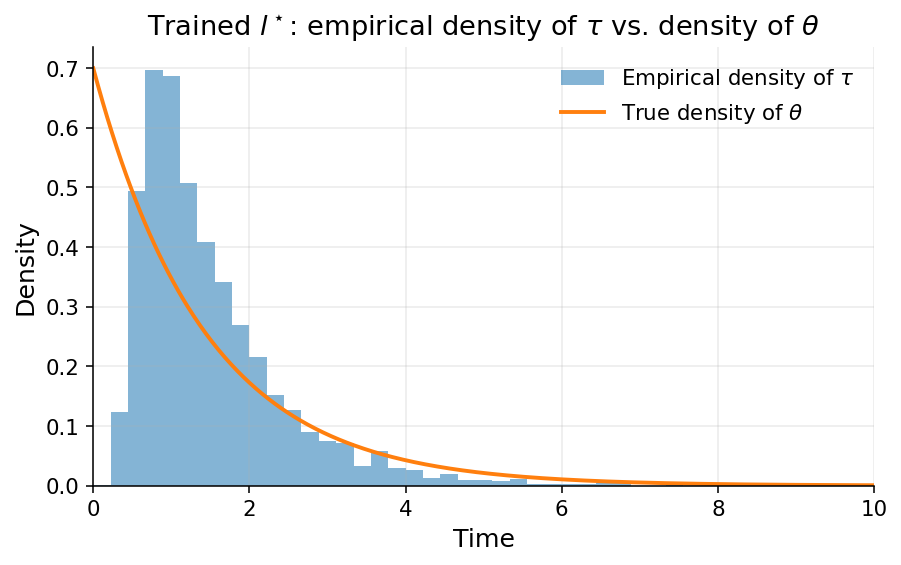}
	\end{minipage}
	\caption{Change-point $\theta^{\text{Exp}}$ and payoff $Y^2$.}
	\label{fig:Y2_exp}
\end{figure}
\begin{figure}[H]
	\centering
	\begin{minipage}[c]{0.23\linewidth}
		\centering
		\includegraphics[width=\linewidth]{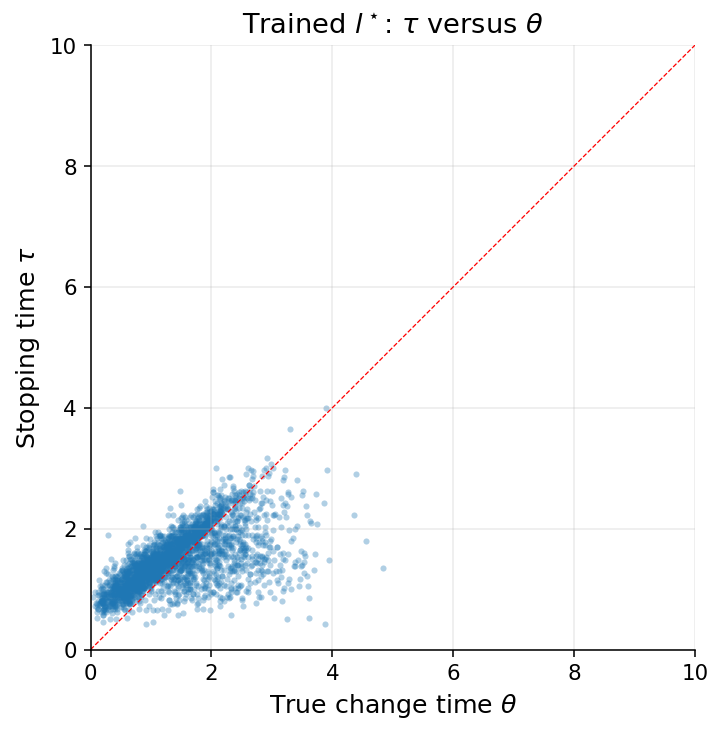}
	\end{minipage}
	\hfill
	\begin{minipage}[c]{0.36\linewidth}
		\centering
		\includegraphics[width=\linewidth]{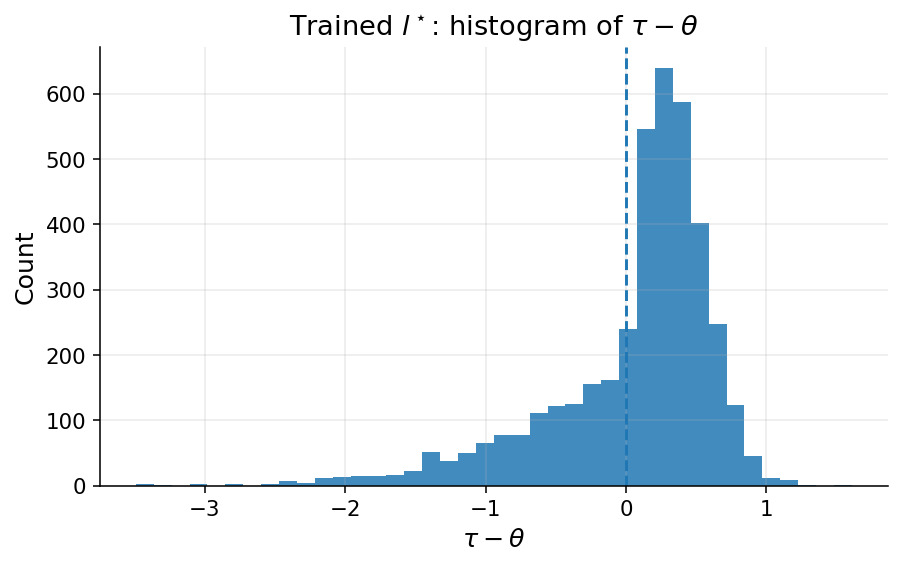}
	\end{minipage}
		\hfill
	\begin{minipage}[c]{0.36\linewidth}
		\centering
		\includegraphics[width=\linewidth]{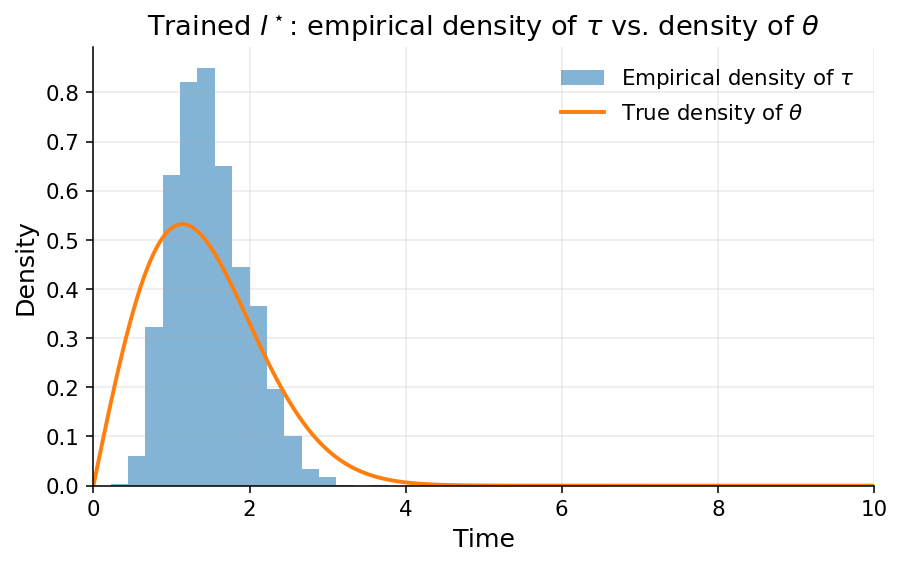}
	\end{minipage}
	\caption{Change-point $\theta^{\text{Wei}}$ and payoff $Y^2$.}
	\label{fig:Y2_wei}
\end{figure}

Since CUSUM and Shiryaev require specific structure of the model which is not available in this example, in Table~\ref{tab:cumsum_cmp} and Table~\ref{tab:cumsum_wei_cmp} we compare our stopping policies to the \emph{Page-Hinkley} detector implemented in the \texttt{River} package in Python. \texttt{River} describes this detector as implementing the CUSUM control chart for change detection. We apply it directly to the observed stream generated from $X_t$ (In our implementation, the increments of the path), so the baseline uses no latent state information and does not assume a parametric pre- and post-change model. Its detection threshold is calibrated to match the false alarm of our signature stopping rule as closely as possible. In Figure~\ref{fig:cusum} and Figure~\ref{fig:cusum_wei} we present similar plots for the Page-Hinkley detector to Figure~\ref{fig:Y1_exp}-Figure~\ref{fig:Y2_wei}. As we can see in the following comparisons, the signature-based rules retain performance comparable to that observed in the Brownian diffusion and exponential change time distribution setting, demonstrating their effectiveness for non-Markovian models. Under both exponential and Weibull change times, each signature rule performs best under its training loss and achieves a substantially shorter delay than Page-Hinkley at a comparable false alarm level. These results suggest that signature training effectively learns the relevant characteristics of the underlying process from sample paths, allowing the resulting rules to perform well without explicit knowledge of the likelihood or model parameters.
\begin{table}[htbp]
	\centering
    \tiny
	\begin{tabular}{lccccccc}
		\toprule
		Model & $\mathbb{E}[Y_{\tau\wedge T}^{1}]$ & $\mathbb{E}[Y_{\tau\wedge T}^{2}]$ & $\mathbb{E}[(\tau-\theta)^+]$ & $\mathbb{P}(\tau<\theta)$ & $\mathbb{E}[\tau]$ & $\mathbb{P}_{\infty}(\tau<T)$ & $\mathbb{E}_{\infty}[\tau\wedge T]$ \\
		\midrule
		Signature ($Y^1$) & 0.5949 & 0.7225 & 0.2809 & 0.3140 & 1.2643 & 0.9967 & 2.2597 \\
		Signature ($Y^2$) & 0.6019 & 0.6911 & 0.3794 & 0.2225 & 1.4925 & 0.9117 & 3.5827 \\
		Page-Hinkley & 1.0024 & 1.1385 & 0.6817 & 0.3208 & 1.6498 & 0.9990 & 2.0491 \\
		\bottomrule
	\end{tabular}
    \caption{Comparison of signature stopping rule and Page-Hinkley baseline with change-point $\theta^{\text{Exp}}$.}
	\label{tab:cumsum_cmp}
\end{table}

\begin{figure}[H]
	\centering
	\begin{minipage}[c]{0.23\linewidth}
		\centering
		\includegraphics[width=\linewidth]{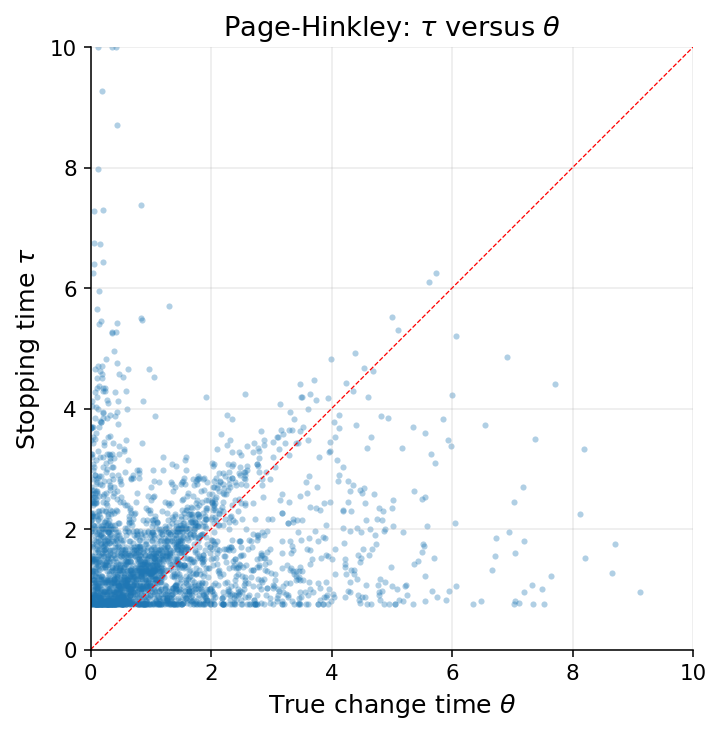}
	\end{minipage}
	\hfill
	\begin{minipage}[c]{0.36\linewidth}
		\centering
		\includegraphics[width=\linewidth]{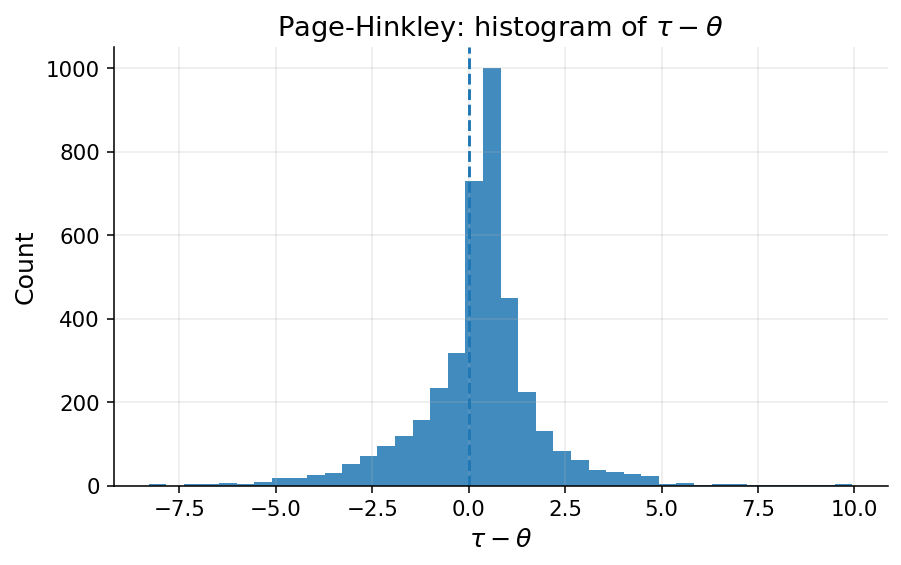}
	\end{minipage}
		\hfill
	\begin{minipage}[c]{0.36\linewidth}
		\centering
		\includegraphics[width=\linewidth]{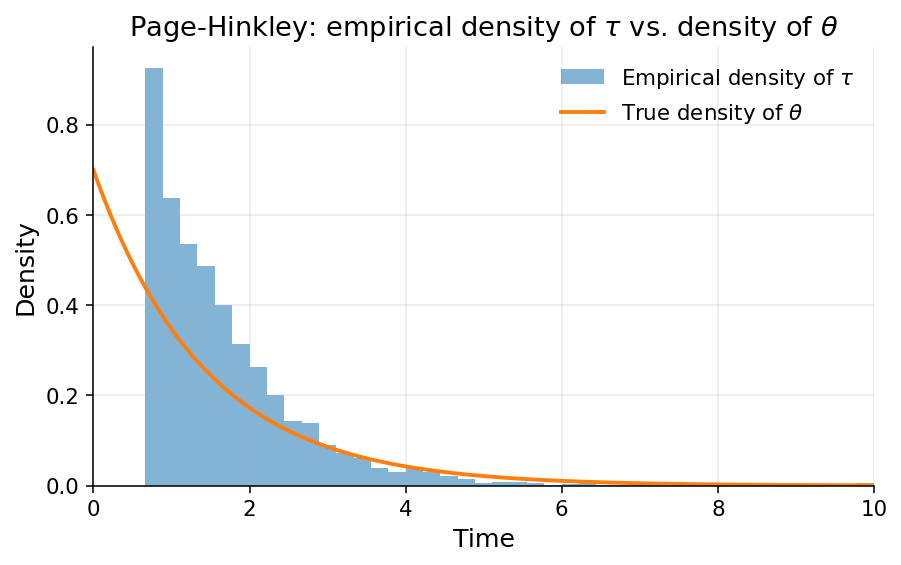}
	\end{minipage}
	\caption{Page-Hinkley quickest detection experiment with change-point $\theta^{\text{Exp}}$.}
	\label{fig:cusum}
\end{figure}

\begin{table}[htbp]
	\centering
    \tiny
	\begin{tabular}{lccccccc}
		\toprule
		Model & $\mathbb{E}[Y_{\tau\wedge T}^{1}]$ & $\mathbb{E}[Y_{\tau\wedge T}^{2}]$ & $\mathbb{E}[(\tau-\theta)^+]$ & $\mathbb{P}(\tau<\theta)$ & $\mathbb{E}[\tau]$ & $\mathbb{P}_{\infty}(\tau<T)$ & $\mathbb{E}_{\infty}[\tau\wedge T]$ \\
		\midrule
		Signature ($Y^1$) & 0.5481 & 0.4698 & 0.3169 & 0.2312 & 1.5915 & 1.0000 & 2.4296 \\
		Signature ($Y^2$) & 0.5607 & 0.4571 & 0.2612 & 0.2995 & 1.4930 & 1.0000 & 1.9353 \\
		Page-Hinkley & 0.9423 & 0.8812 & 0.7116 & 0.2308 & 1.9696 & 0.9843 & 3.1201 \\
		\bottomrule
	\end{tabular}
	\caption{Comparison of signature stopping rule and Page-Hinkley baseline with change-point $\theta^{\text{Wei}}$.}
    \label{tab:cumsum_wei_cmp}
\end{table}

\begin{figure}[H]
	\centering
	\begin{minipage}[c]{0.23\linewidth}
		\centering
		\includegraphics[width=\linewidth]{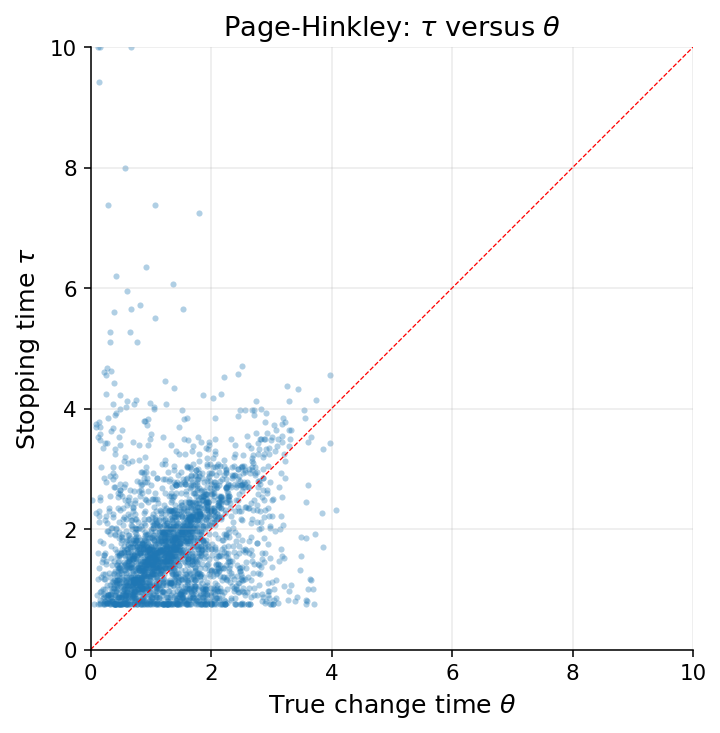}
	\end{minipage}
	\hfill
	\begin{minipage}[c]{0.36\linewidth}
		\centering
		\includegraphics[width=\linewidth]{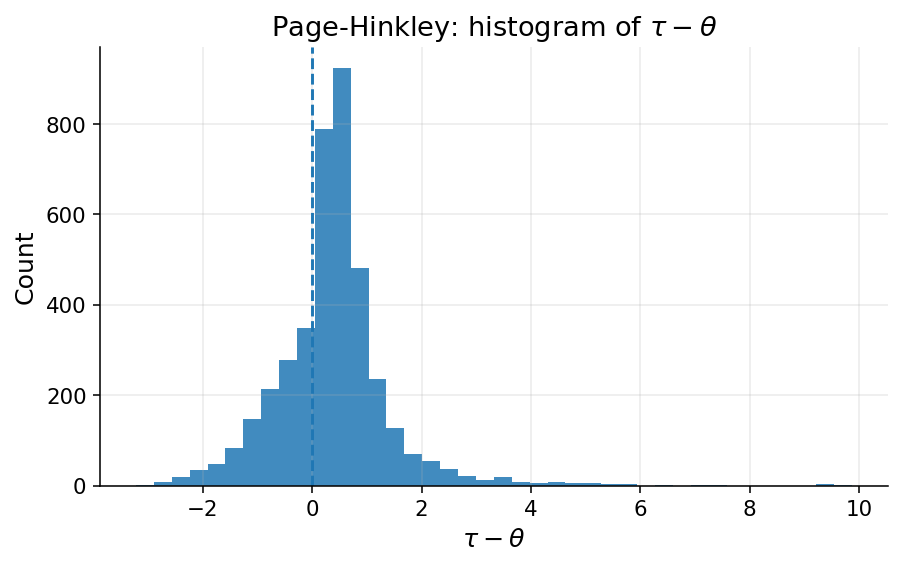}
	\end{minipage}
		\hfill
		\begin{minipage}[c]{0.36\linewidth}
		\centering
		\includegraphics[width=\linewidth]{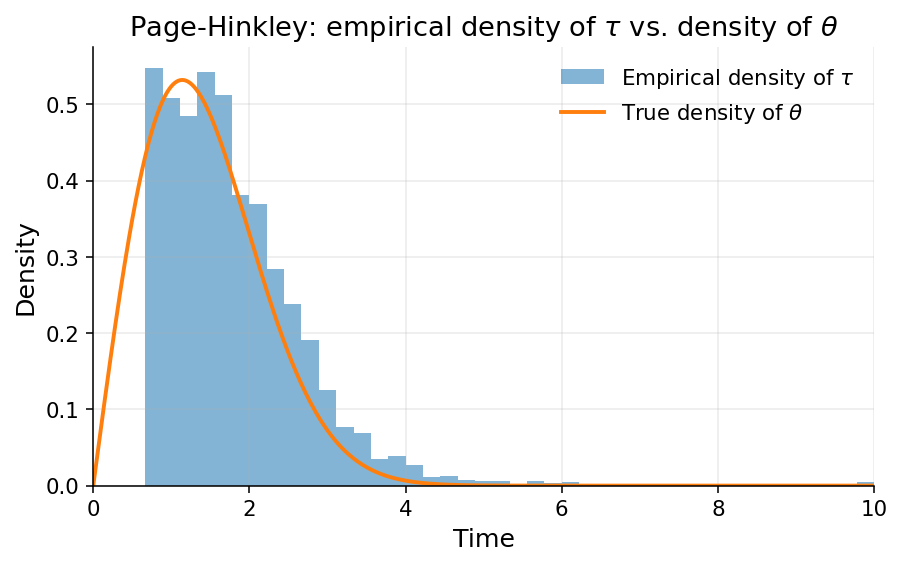}
	\end{minipage}
	\caption{Page-Hinkley quickest detection experiment with change-point $\theta^{\text{Wei}}$.}
	\label{fig:cusum_wei}
\end{figure}

For change-point $\theta^{\text{Exp}}$, we consider the two loss functions $Y^1$ and $Y^2$ defined in \eqref{eq:Y1} and \eqref{eq:Y2}, and examine how tuning the parameters in the loss affects the trade-off between expected detection delay and false alarm probability. 

For the loss $Y^1$, we vary
$$c \in \{0.25, 0.5,1,1.5,2\},$$
retraining the stopping rule for each value. The results are shown in Table~\ref{tab:Y1-exp-c}. As $c$ increases, delayed detection becomes more costly, and the learned stopping rule tends to stop earlier, reducing detection delay at the expense of a potentially larger false alarm rate.
\begin{table}[H]
	\centering
    \tiny
\begin{tabular}{lcccccc}
	\toprule
 Parameter & $\mathbb{E}[Y_{\tau \wedge T}]$ & $\mathbb{E}[(\tau-\theta)^+]$ & $\mathbb{P}(\tau<\theta)$ & $\mathbb{E}[\tau]$ & $\mathbb{P}_{\infty}(\tau<T)$ & $\mathbb{E}_{\infty}[\tau \wedge T]$ \\
	\midrule
	$c=0.25$ & 0.2249 & 0.7051 & 0.0487 & 2.0506 & 0.3376 & 7.8722 \\
	$c=0.5$ & 0.3692 & 0.5298 & 0.1043 & 1.8233  & 0.6056 & 6.0718 \\
	$c=1$ & 0.5877 & 0.2984 & 0.2893 & 1.3031  & 0.9960 & 2.4053 \\
	$c=1.5$ & 0.7015 & 0.1890 & 0.4180 & 0.9963  & 1.0000 & 1.3903 \\
    $c=2$ & 0.7876 & 0.1155 & 0.5567 & 0.7474  & 1.0000 & 0.9518 \\
	\bottomrule
\end{tabular}
\caption{Change-point $\theta^{\text{Exp}}$ and payoff $Y^1$ with different $c$.}
\label{tab:Y1-exp-c}
\end{table}

For the loss $Y^2$, we fix $a=1$ and vary
$$b \in \{0.25, 0.5,1,1.5,2\}.$$
The corresponding results are reported in Table~\ref{tab:Y2-exp-b}. Again, larger values of $b$ lead to more aggressive stopping behavior, reflecting the increased penalty on post-change delay. Similar to the last experiment, as the the calibrated $b$ increases, the false alarm probability $\PP(\tau<\theta)$ increases and the detection delay decreases substantially, illustrating the expected trade-off between false alarms and detection speed.
\begin{table}[H]
	\centering
    \tiny
\begin{tabular}{lcccccc}
	\toprule
	Parameters  & $\mathbb{E}[Y_{\tau \wedge T}]$ & $\mathbb{E}[(\tau-\theta)^+]$ & $\mathbb{P}(\tau<\theta)$ & $\mathbb{E}[\tau]$ &  $\mathbb{P}_{\infty}(\tau<T)$ & $\mathbb{E}_{\infty}[\tau \wedge T]$ \\
	\midrule
	$a=1,\ b=0.25$ & 0.2534 & 0.7874 & 0.0367 & 2.1584 & 0.2120 & 8.6246 \\
	$a=1,\ b=0.5$ & 0.4089 & 0.6241 & 0.0693 & 1.9049 & 0.3612 & 7.5396 \\
	$a=1,\ b=1$ & 0.6771 & 0.3908 & 0.2030 & 1.5305 & 0.8892 & 3.9604 \\
	$a=1,\ b=1.5$ & 0.8154 & 0.3070 & 0.2620 & 1.3472  & 0.9376 & 2.9976 \\
	$a=1,\ b=2$ & 0.9657 & 0.2127 & 0.3857 & 1.0793  & 1.0000 & 1.5920 \\
	\bottomrule
\end{tabular}
\caption{Change-point $\theta^{\text{Exp}}$ and payoff $Y^2$ with $a=1$ and different $b$.}
\label{tab:Y2-exp-b}
\end{table}
We provide the same experiments for Change-point $\theta^{\text{Wei}}$ in the following Table~\ref{tab:Y1-wei-c}-\ref{tab:Y2-wei-b} and discover similar pattern.
\begin{table}[H]
	\centering
    \tiny
\begin{tabular}{lcccccc}
	\toprule
	Parameters & $\mathbb{E}[Y_{\tau \wedge T}]$ & $\mathbb{E}[(\tau-\theta)^+]$ & $\mathbb{P}(\tau<\theta)$ & $\mathbb{E}[\tau]$ & $\mathbb{P}_{\infty}(\tau<T)$ & $\mathbb{E}_{\infty}[\tau \wedge T]$ \\
	\midrule
	$c=0.25$ & 0.1942 & 0.6353 & 0.0353 & 2.0308 & 0.8912 & 5.2084 \\
	$c=0.5$ & 0.3294 & 0.4962 & 0.0813 & 1.8861 & 0.9964 & 3.5945 \\
	$c=1$ & 0.5362 & 0.3148 & 0.2213 & 1.5844 & 1.0000 & 2.3466 \\
	$c=1.5$ & 0.6800 & 0.2282 & 0.3377 & 1.3987 & 1.0000 & 1.8178 \\
	$c=2$ & 0.7736 & 0.1705 & 0.4327 & 1.2818 & 1.0000 & 1.5838 \\
	\bottomrule
\end{tabular}
\caption{Change-point $\theta^{\text{Wei}}$ and payoff $Y^1$ with different $c$.}
\label{tab:Y1-wei-c}
\end{table}

\begin{table}[H]
	\centering
    \tiny
\begin{tabular}{lcccccc}
	\toprule
	Parameters & $\mathbb{E}[Y_{\tau \wedge T}]$ & $\mathbb{E}[(\tau-\theta)^+]$ & $\mathbb{P}(\tau<\theta)$ & $\mathbb{E}[\tau]$ & $\mathbb{P}_{\infty}(\tau<T)$ & $\mathbb{E}_{\infty}[\tau \wedge T]$ \\
	\midrule
	$a=1,\ b=0.25$ & 0.1738 & 0.5308 & 0.0730 & 1.9141 & 0.9932 & 3.6527 \\
	$a=1,\ b=0.5$ & 0.2823 & 0.4258 & 0.1247 & 1.7530 & 1.0000 & 2.8835 \\
	$a=1,\ b=1$ & 0.4477 & 0.2574 & 0.2920 & 1.4974 & 1.0000 & 1.9826 \\
	$a=1,\ b=1.5$ & 0.5506 & 0.2023 & 0.3727 & 1.3664 & 1.0000 & 1.6457 \\
	$a=1,\ b=2$ & 0.6540 & 0.1573 & 0.4597 & 1.2380 & 1.0000 & 1.4415 \\
	\bottomrule
\end{tabular}
\caption{Change-point $\theta^{\text{Wei}}$ and payoff $Y^2$ with $a=1$ and different $b$.}
\label{tab:Y2-wei-b}
\end{table}

Building on the preceding sensitivity analysis above, we next study the constrained quickest-detection problem
	$$
	\min _\tau \mathbb{E}\left[(\tau-\theta)^{+}\right] \quad \text { subject to } \quad \mathbb{P}(\tau<\theta) \leq \alpha.
	$$
We use the loss $Y_\tau^1=\1_{\{\tau<\theta\}}+c(\tau-\theta)^{+}$ as its Lagrangian formulation and calibrate $c$ for each target false alarm probability $\alpha$. Tables~\ref{tab:constraint_exp} and \ref{tab:constraint_wei} report the results of a binary search over $c$ to match target PFA $\alpha \in \{0.01, 0.05, 0.10, 0.15\}$ under exponential and Weibull change time distributions, respectively. The resulting false alarm probabilities closely match their prescribed levels. This demonstrates that the proposed training procedure can reliably enforce a desired Bayesian false alarm constraint. The corresponding no-change false alarm probabilities can nevertheless be considerably higher because, under $\PP_\infty$, the process remains in the pre-change regime throughout the entire horizon.
\begin{table}[H]
\centering
\tiny
\begin{tabular}{cccccccc}
	\toprule
	Target PFA & $\mathbb{P}(\tau<\theta)$ & c & $\mathbb{E}[Y_{\tau \wedge T}]$ & $\mathbb{E}[(\tau-\theta)^+]$ & $\mathbb{E}[\tau]$ & $\mathbb{P}_{\infty}(\tau<T)$ & $\mathbb{E}_{\infty}[\tau \wedge T]$ \\
	\midrule
	0.0100 & 0.0108 & 0.0639  & 0.0705 & 0.9343 & 2.3701  & 0.1065 & 9.3415 \\
	0.0500 & 0.0500 & 0.2688 & 0.2330 & 0.6808 & 2.0509  & 0.3892 & 7.6613 \\
	0.1000 & 0.1006  & 0.4359  & 0.3352 & 0.5382 & 1.8179 & 0.5720 & 6.4453 \\
	0.1500 & 0.1496  & 0.5221  & 0.3999 & 0.4794 & 1.6678  & 0.7700 & 4.9131 \\
	\bottomrule
\end{tabular}
\caption{Constraint problem with change-point $\theta^{\text{Exp}}$.}
\label{tab:constraint_exp}
\end{table}
\begin{table}[H]
\centering
\tiny
\begin{tabular}{cccccccc}
	\toprule
	Target PFA & $\mathbb{P}(\tau<\theta)$ & c & $\mathbb{E}[Y_{\tau \wedge T}]$ & $\mathbb{E}[(\tau-\theta)^+]$ & $\mathbb{E}[\tau]$ & $\mathbb{P}_{\infty}(\tau<T)$ & $\mathbb{E}_{\infty}[\tau \wedge T]$ \\
	\midrule
	0.0100 & 0.0096 & 0.0801 & 0.0795 & 0.8723 & 2.2919 & 0.4198 & 8.0912 \\
	0.0500 & 0.0506 & 0.2917 & 0.2152 & 0.5645 & 1.9659 & 0.9050 & 4.6911 \\
	0.1000 & 0.1000 & 0.5639 & 0.3586 & 0.4586 & 1.8228 & 0.9938 & 3.4948 \\
	0.1500 & 0.1508 & 0.7000 & 0.4305 & 0.3995 & 1.7352 & 0.9962 & 3.0030 \\
	\bottomrule
\end{tabular}
\caption{Constraint problem with change-point $\theta^{\text{Wei}}$.}
\label{tab:constraint_wei}
\end{table}
Overall, the results confirm that the learned linear signature stopping rules adapt consistently to the choice of loss function and parameters. The signature 
approach outperforms the Page-Hinkley baseline across all metrics in Table~\ref{tab:cumsum_cmp}, while the parameter 
sensitivity analysis in Tables~\ref{tab:Y1-exp-c}--\ref{tab:Y2-wei-b} demonstrates the flexibility of the framework in navigating the delay-false alarm trade-off.

\subsection{Repeated experiments with a common change-point}
We compare the hard stopping rule $\tau_{l^*}$ of Section~\ref{sec:detector} with the aggregated rule $\tilde{\tau}$ of Section~\ref{sec:det-theta}, using $R=25$ replications for $\tilde{\tau}$. Both rules use the same signature truncation level and loss function $Y^1$. Table~\ref{tab:fixed_theta_compare} reports the results for several fixed values of $\theta$.

\begin{table}[H]
	\centering
    \tiny
	\begin{tabular}{llcccc}
		\toprule
		$\theta$ & Policy & $\mathbb{E}[Y_{\tau \wedge T}]$ & $\mathbb{E}[(\tau-\theta)^+]$ & $\mathbb{P}(\tau<\theta)$ & $\mathbb{E}[\tau]$ \\
		\midrule
		0.50 & $\tau_{l^*}$ & 0.4286 & 0.4006 & 0.0280 & 0.8974 \\
		0.50 & $\tilde{\tau}$ ($R=25$) & 0.3949 & 0.3949 & 0.0000 & 0.8949 \\
		1.00 & $\tau_{l^*}$ & 0.4552 & 0.2878 & 0.1673 & 1.2367 \\
		1.00 & $\tilde{\tau}$ ($R=25$) & 0.2720 & 0.2720 & 0.0000 & 1.2720 \\
		1.50 & $\tau_{l^*}$ & 0.5705 & 0.1839 & 0.3867 & 1.4903 \\
		1.50 & $\tilde{\tau}$ ($R=25$) & 0.2530 & 0.1284 & 0.1247 & 1.6148 \\
		2.00 & $\tau_{l^*}$ & 0.6996 & 0.1136 & 0.5860 & 1.6835 \\
		2.00 & $\tilde{\tau}$ ($R=25$) & 0.8893 & 0.0079 & 0.8813 & 1.7228 \\
		3.00 & $\tau_{l^*}$ & 0.9504 & 0.0097 & 0.9407 & 1.7396 \\
		3.00 & $\tilde{\tau}$ ($R=25$) & 1.0000 & 0.0000 & 1.0000 & 1.7330 \\
		\bottomrule
	\end{tabular}
	\caption{Comparison of stopping policies for fixed change times $\theta$.}
	\label{tab:fixed_theta_compare}
\end{table}

\subsection{Adversarial quickest detection}
We now present numerical experiments for the adversarial quickest detection problem. In contrast to the nominal setting, where the observation process is generated solely by the signal and noise, we now allow an additional perturbation chosen by an adversary. More precisely, the observed path takes the form
$$
Z_t = X_t + w_t,\qquad t\in[0,T],
$$
where $X$ is the baseline process and $w$ is an adversarial perturbation. In our experiments, we use Example~\ref{ex:fBM} as the baseline model:
$$
dX_t^{(1)}=\sigma\, dB_t^H, \qquad dX_t^{(2)}=r\, dt+\sigma\, dB_t^H,
$$
where we take $\sigma=1$, $r=3$, $H=0.35$, and truncation level $N=4$. The observed path $X$ is the concatenated trajectory defined in \eqref{eq:X}. The adversary is constrained by a bounded total variation budget. More precisely, the perturbation belongs to the class
$$
\mathcal{W}:=\left\{w:[0,T]\to\mathbb{R}\,\middle|\, w_0=0,\ \|w\|_{\mathrm{TV};[0,T]}\leq C_{\mathrm{TV}}\right\},
$$
where $C_{\mathrm{TV}}>0$ is fixed. This class models structured but finite strength perturbations that may distort the observed trajectory and thereby make the detection task more challenging. The robust quickest detection problem can be formulated as in Section~\ref{sec:adv}:
$$
\inf_{\tau}\sup_{P_1\in\mathcal{P}_1,\;P_2\in\mathcal{P}_2}\E^{P_1,P_2}\bigl[Y_{\tau\wedge T}\bigr],
$$
with $\mathcal{P}_1$ and $\mathcal{P}_2$ defined appropriately. In the present setting, this is equivalently written as
$$
\inf_{\tau}\sup_{w\in\mathcal{W}} \E^{w}\bigl[Y_{\tau\wedge T}\bigr].
$$
where $\E^w[\cdot]$ denotes the expectation when perturbation is $w$. As in the ordinary quickest detection problem, we restrict attention to linear signature type stopping rules. Given a coefficient vector $l$, we consider the hard stopping policy
$$
\tau_l=\inf\left\{t_k:\left|\left\langle l,\widehat{\mathbb{Z}}^{<\infty}_{0,t_k}\right\rangle\right|\geq 1\right\}\wedge T,
$$
where $\widehat{\mathbb{Z}}$ denotes the signature lift of the discretized observation path $Z=X+w$.

One approach to approximating the supremum over $\mathcal{W}$ is to simulate a finite representative subset of admissible perturbations and maximize over this subset. As an alternative, we parameterize the adversary and employ an alternating zeroth-order optimization procedure based on esGS to solve the resulting min-max problem (see, for example, \cite{hsieh21a} for alternating optimization in min-max problem). 
In principle, the adversary may use any $w\in\mathcal{W}$ 
with $\|w\|_{TV;[0,T]} \leq C_{TV}$. In practice, 
since a rational adversary will exhaust the full budget 
to maximally disrupt the detector, we restrict attention 
to perturbations with $\|w\|_{TV;[0,T]} = C_{TV}$. 
This simplification reduces the search space without 
loss of generality for the minimax problem, and leads 
to the following piecewise constant parametrization.
\begin{enumerate}[label = (\roman*),leftmargin=*, align=left,labelsep=0em]
	\item We represent the perturbation $w$ by a piecewise constant path with $J$ jumps:
	$$
	w_t=C_{\mathrm{TV}}\sum_{j=1}^J s_j\, p_j\,\1_{\{T_j\leq t\}}.
	$$
	\item The jump magnitudes $(p_1,\dots,p_J)$ are sampled from a Dirichlet distribution with parameter $\boldsymbol{\alpha}^m\in\R_+^J$. Since $\sum_{i=1}^{J}p_i=1$, $\boldsymbol{\alpha}^m$ determines how the total variation budget is distributed across the $J$ jumps.
	\item The jump times $T_1,\dots,T_J$ are generated from another Dirichlet distribution on the $(J+1)$ interval lengths, with parameter $\boldsymbol{\alpha}^t\in\R_+^{J+1}$. The cumulative sums of these interval lengths then yield $J$ ordered jump times on $[0,T]$.
	\item The signs $s_j\in\{-1,+1\}$ are sampled from a Rademacher distribution and determine the direction of each jump. The Rademacher distribution is used rather than a fixed sign to allow the adversary to learn whether upward or downward perturbations are more harmful to the detector.
\end{enumerate}
In this way, the adversary is described by two families of positive parameters: one controlling the distribution of jump magnitudes and the other controlling the distribution of jump locations. This parameterization yields a flexible yet low dimensional class of perturbations and allows us to search numerically for challenging adversarial patterns. Denote $\boldsymbol{\alpha}=(\boldsymbol{\alpha}^m,\,\boldsymbol{\alpha}^t)$. Under this parameterization, the robust training problem becomes
$$
\inf_{l}\sup_{\boldsymbol{\alpha}} \E^{\boldsymbol{\alpha}}\bigl[Y_{\tau_l\wedge T}\bigr].
$$
Since the stopping rule is defined through a threshold crossing, the resulting objective is non-smooth and exact gradients are unavailable. We therefore adopt an alternating esGS procedure. Starting from an initial detector coefficient $l^{(0)}$ and initial adversarial parameters $\boldsymbol{\alpha}^{(0)}=(\boldsymbol{\alpha}^{m,(0)},\boldsymbol{\alpha}^{t,(0)})$, each iteration consists of two stages:
\begin{enumerate}[leftmargin=*, align=left]
	\item \textbf{Detector update.} Keeping the adversarial parameters fixed, we apply a esGS SGD step to update $l$ so as to decrease the empirical robust loss:
	$$
	l^{(k+1)}=l^{(k)}-\gamma_k\,\hat{g}_{\eta}(l^{(k)},\boldsymbol{\alpha}^{(k)},v^{(k)}_l,z^{(k)}_l).
	$$
	\item \textbf{Adversary update.} Keeping the detector coefficient fixed, we apply a esGS SGD step to update $\boldsymbol{\alpha}^m$ and $\boldsymbol{\alpha}^t$ so as to increase the same empirical loss:
	$$
	\boldsymbol{\alpha}^{(k+1)}=\boldsymbol{\alpha}^{(k)}+\gamma_k\,\hat{g}_{\eta}(l^{(k+1)},\boldsymbol{\alpha}^{(k)},v^{(k)}_\al,z^{(k)}_\al).
	$$
\end{enumerate}
Here $\hat{g}_\eta$ is the esGS gradient estimator in \eqref{eq:esgs}. These two steps are then alternated until the prescribed number of outer iterations is reached. In each outer iteration, one can also performs $k_1$ updates for $l$ and then $k_2$ updates for $\boldsymbol{\alpha}^m,\,\boldsymbol{\alpha}^t$ (here $k_1,k_2 \in \N$ are arbitrary but fixed). In practice, each objective evaluation is the empirical expectation. Thus, the detector is trained against increasingly harmful perturbations, while the adversary is simultaneously adapted to exploit the current weakness of the stopping rule.

Our main goal is to compare the nominally trained stopping rule with the robustly trained one. In particular, we examine whether adversarial training leads to improved worst case performance over the perturbation class, and how much this robustness costs in terms of detection delay or nominal performance. We do the experiments for $\theta \sim \mathrm{Exp}(0.5),\, \theta\sim \mathrm{Weibull}(2/\Gamma(\frac{3}{2}),2)$ and $C_{\text{TV}}=\{0.5,1,2,4\}$. 
\begin{table}[H]
	\centering
	\tiny
	\setlength{\tabcolsep}{5pt}
	\begin{tabular}{lcccccc}
		\toprule
		Change time & Loss & $C_{\mathrm{TV}}$
		& $\mathbb{E}[Y_{\tau\wedge T}]$ & $\mathbb{E}[(\tau-\theta)^+]$ & $\mathbb{P}(\tau<\theta)$ & $\mathbb{E}[\tau]$ \\
		\midrule
		\multirow{8}{*}{Exponential}
		& \multirow{4}{*}{$Y^1$}
		& 0.5 & 0.6330/0.6294 & 0.2875/0.2929 & 0.3455/0.3365 & 1.5460/1.5647 \\
		& & 1   & 0.6495/0.6445 & 0.2795/0.2890 & 0.3700/0.3555 & 1.5679/1.6010 \\
		& & 2   & 0.6438/0.6387 & 0.2773/0.2907 & 0.3665/0.3480 & 1.5544/1.5990 \\
		& & 4   & 0.6543/0.6500 & 0.2703/0.2755 & 0.3840/0.3745 & 1.5281/1.5558 \\
		\cmidrule(lr){2-7}
		& \multirow{4}{*}{$Y^2$}
		& 0.5 & 0.8431/0.8374 & 0.5802/0.5689 & 0.1325/0.1350 & 2.2333/2.2164 \\
		& & 1   & 0.8672/0.8529 & 0.5716/0.5486 & 0.1370/0.1440 & 2.3045/2.2728 \\
		& & 2   & 0.8367/0.8256 & 0.5530/0.5369 & 0.1445/0.1485 & 2.2315/2.2103 \\
		& & 4   & 0.9009/0.9001 & 0.5514/0.5492 & 0.1740/0.1705 & 2.2230/2.2194 \\
		\midrule
		\multirow{8}{*}{Weibull}
		& \multirow{4}{*}{$Y^1$}
		& 0.5 & 0.5824/0.5768 & 0.3249/0.3263 & 0.2575/0.2505 & 2.0691/2.0838 \\
		& & 1   & 0.5788/0.5791 & 0.3238/0.3366 & 0.2550/0.2425 & 2.0511/2.0807 \\
		& & 2   & 0.5956/0.5933 & 0.3266/0.3388 & 0.2690/0.2545 & 2.0464/2.0792 \\
		& & 4   & 0.6210/0.6197 & 0.3170/0.3217 & 0.3040/0.2980 & 2.0014/2.0276 \\
		\cmidrule(lr){2-7}
		& \multirow{4}{*}{$Y^2$}
		& 0.5 & 0.5685/0.5681 & 0.3737/0.3643 & 0.2235/0.2300 & 2.1513/2.1330 \\
		& & 1   & 0.5783/0.5756 & 0.3606/0.3547 & 0.2405/0.2440 & 2.1789/2.1697 \\
		& & 2   & 0.5721/0.5661 & 0.3721/0.3677 & 0.2240/0.2265 & 2.1646/2.1618 \\
		& & 4   & 0.5792/0.5756 & 0.3472/0.3432 & 0.2680/0.2710 & 2.1483/2.1439 \\
		\bottomrule
	\end{tabular}
	\caption{Worst case performance of the nominal and adversarial trained stopping rules. Each entry is reported as \textbf{nominal/adversarial} trained.}
	\label{tab:ad_summary}
\end{table}
Tables~\ref{tab:ad_summary} show that, in general, adversarial training improves worst case empirical performance, evaluated under the worst-case perturbations $\boldsymbol{\alpha}^m$ and $\,\boldsymbol{\alpha}^t$. For $Y^1$, the improvement is primarily achieved by reducing false alarms at the cost of a small increase in detection delay, whereas for $Y^2$, it generally results from reducing the delay while accepting a slight increase in false alarms. This difference reflects the structures of the two losses: $Y^1$ assigns the same fixed penalty to every false alarm, encouraging more conservative stopping of the adversarial training, while $Y^2$ penalizes a false alarm according to how early it occurs, allowing the rule to stop more aggressively when an alarm is expected to occur close to the change-point. The rules also tend to achieve lower risks under the Weibull change time distribution. This is consistent with the ability of the time-augmented signature to exploit the non-memoryless structure of the Weibull distribution and learn predictive information from the history.

\section{Conclusion}\label{sec:conclusion}
We develop a framework for quickest detection on rough path space, unifying classical sequential change-point detection theory with modern rough path signatures. The optimal stopping rule for natural loss objectives is a signature half-space hitting time, the first time a linear functional of the observed rough path signature crosses a threshold. This form arises from the payoff structure of the optimal stopping problem and the intrinsic geometry of the observed path, providing mutual theoretical support for the proposed stopping rules. Our framework accommodates non-Markovian, nonstationary signals driven by irregular noise, including fractional Brownian motion, for which classical sufficient statistics are unavailable. Unlike classical procedures like CUSUM and the Shiryaev rule, the truncated signature coefficient is a universal, model-free feature of the observed path, learned from data. The framework extends to a distributionally robust or adversarial formulation, where the pre-change and post-change models range over prescribed uncertainty classes. When a least favorable model pair exists, the robust problem reduces to an ordinary stopping problem, and the same signature-based rule applies. Numerical experiments confirm that adversarial training produces robust stopping rules to worst-case path perturbations.

\bigskip

\bibliographystyle{plain}
\bibliography{quick_detect.bib}

\begin{appendix}

\section{Additional preliminaries}
\subsection{Tensor algebra and Lie bracket}\label{sec:tensor}
To define iterated integrals of paths in a coordinate-free and noncommutative setting, we work in the tensor algebra over $\mathbb{R}^d$ and its associated Lie algebra.
Let $(e_1,\dots,e_d)$ be the canonical basis of $\R^d$, and let $(e_1^*,\dots,e_d^*)$ be the dual basis of $(\R^d)^*$. For each $n\ge 0$, define
$$
(\R^d)^{\otimes 0}:=\R,\qquad(\R^d)^{\otimes n}:=\underbrace{\R^d\otimes\cdots\otimes\R^d}_{n\text{ times}},\quad n\ge1.
$$
An element $a_n\in(\R^d)^{\otimes n}$ is of the form
$$a_n=\sum_{i_1,...,i_n=1}^da_{(i_1,...,i_n)}e_{i_1}\otimes\cdots\otimes e_{i_n}.$$
We set
$$
T(\R^d):=\bigoplus_{n=0}^\infty (\R^d)^{\otimes n},\qquad T((\R^d)):=\prod_{n=0}^\infty (\R^d)^{\otimes n}.
$$
Thus an element $\mathbf{a}\in T((\R^d))$ is of the form
$$
\mathbf{a}=(a_0,a_1,a_2,\dots),\qquad a_n\in (\R^d)^{\otimes n},
$$
and belongs to $T(\R^d)$ if only finitely many levels are nonzero. We denote the projections
$$
\pi_n(\mathbf{a}):=a_n ,\quad \text{and } \pi_{\leq N}(\mathbf{a}):=(a_0,a_1,...,a_N).
$$
We equip $T((\R^d))$ with sum and scalar product
\begin{align*}
	\mathbf{a}+\mathbf{b}&:=\left(a_0+b_0,\ldots,a_n+b_n,\ldots\right),\\
	\lambda\cdot\mathbf{a}&:=\left(\lambda a_0,\ldots,\lambda a_n,\ldots\right),
\end{align*}
and the tensor product $\mathbf{a}\otimes\mathbf{b}$ defined levelwise by
$$
(\mathbf{a}\otimes \mathbf{b})_n
:=
\sum_{k=0}^n a_k\otimes b_{n-k},
\qquad n\ge0,
$$
for $\mathbf{a}=(a_n)_{n=0}^\infty$, $\mathbf{b}=(b_n)_{n=0}^\infty$ in $T((\R^d))$. With this product, $(T((\mathbb{R}^{d})),+,\cdot,\otimes)$ is a real non-commutative algebra with neutral element $\mathbf{1}=(1,0,\ldots,0,\ldots)$.
For $N\in\N$, we also define the truncated tensor algebra 
$$T^N(\mathbb{R}^d):=\left\{\mathbf{a}\in T((\mathbb{R}^d)):a_n=0,\forall n>N\right\}.$$
%\begin{definition}[Free nilpotent Lie algebra and Lie bracket]
Let $T^N(\R^d)$ be equipped with the associative product $\otimes$ from above. Define the \emph{commutator (Lie bracket)} of $\mathbf{a},\mathbf{b}\in T^N(\R^d)$ by
$$
	[\mathbf{a},\mathbf{b}]\;:=\; \mathbf{a}\otimes \mathbf{b} - \mathbf{b}\otimes \mathbf{a}.
$$
Let $\mathfrak{g}^N(\R^d)\subset T^N(\R^d)$ be the smallest linear subspace containing $\R^d$ and closed under the bracket $[\cdot,\cdot]$; equivalently, $\mathfrak{g}^N(\R^d)$ is the linear span of all iterated commutators of elements of $\R^d$ of bracket-length at most $N$.
$$\mathfrak{g}^N\left(\mathbb{R}^d\right)=\mathbb{R}^d \oplus\left[\mathbb{R}^d, \mathbb{R}^d\right] \oplus \cdots \oplus \underbrace{\left[\mathbb{R}^d,\left[\ldots,\left[\mathbb{R}^d, \mathbb{R}^d\right]\right]\right]}_{(N-1) \text { brackets }} .$$
Then $(\mathfrak{g}^N(\R^d),[\cdot,\cdot])$ is a Lie algebra, and we call it the \emph{free step-$N$ nilpotent Lie algebra}, see \cite[Def 7.25]{friz-victoir}.
%\end{definition}

It is readily checked that the Lie bracket has the following properties:
\begin{enumerate}
	\item \emph{Bilinearity}: For all scalars $\alpha, \beta$ and all
	$\mathbf{a}_1, \mathbf{a}_2, \mathbf{b} \in T(\R^d)$,
	$$
	\left[\alpha \mathbf{a}_1+\beta \mathbf{a}_2, \mathbf{b}\right]
	=\alpha\left[\mathbf{a}_1, \mathbf{b}\right]+\beta\left[\mathbf{a}_2, \mathbf{b}\right],
	\quad\text{and}\quad
	\left[\mathbf{a}, \alpha \mathbf{b}_1+\beta \mathbf{b}_2\right]
	=\alpha\left[\mathbf{a}, \mathbf{b}_1\right]+\beta\left[\mathbf{a}, \mathbf{b}_2\right].
	$$
	\item \emph{Anti-symmetry}:
	$$
	[\mathbf{b}, \mathbf{a}]=-[\mathbf{a}, \mathbf{b}],
	$$
	and in particular $[\mathbf{a}, \mathbf{a}]=0$.
	\item \emph{Jacobi identity}: For all $\mathbf{a},\mathbf{b},\mathbf{c}\in T(\R^d)$,
	$$
	\bigl[\mathbf{a},[\mathbf{b},\mathbf{c}]\bigr]
	+\bigl[\mathbf{b},[\mathbf{c},\mathbf{a}]\bigr]
	+\bigl[\mathbf{c},[\mathbf{a},\mathbf{b}]\bigr]=0.
	$$
	This follows directly from the associativity of $\otimes$.
\end{enumerate}
The Lie algebra $\mathfrak{g}^N(\mathbb{R}^d)$ underlies the  exponential parametrization of the free nilpotent group introduced in Section~\ref{sec:shuffle} and reappears in Example~\ref{ex:BM} in the analysis of the Carnot--Carath\'eodory geometry of the Brownian rough path.
\subsection{Shuffle and group-like elements}\label{sec:shuffle}
To evaluate the signature against linear functionals and to exploit its multiplicative structure, we introduce the dual tensor algebra and the shuffle product.
$$
T((\R^d)^*):=\bigoplus_{n=0}^\infty ((\R^d)^*)^{\otimes n},
$$
where elements $l\in T((\R^d)^*)$ are sequences $(l^{(n)})_{n\ge 0}$ with $l^{(n)}=0$ for all but finitely many $n$, so that the pairing below is always a finite sum.
Each $l=(l^{(n)})_{n\ge0}\in T((\R^d)^*)$ acts naturally on $\mathbf{a}=(a_n)_{n\ge0}\in T((\R^d))$ by
$$
\langle l,\mathbf{a}\rangle:=\sum_{n=0}^\infty \langle l^{(n)},a_n\rangle,
$$
whenever the sum is well defined. In particular, if $l$ has only finitely many nonzero levels, then the above sum is finite. For a multi-index $I=(i_1,\dots,i_n)$ with $i_k\in\{1,\dots,d\}$, define
$$
|I|:=n,\qquad e_I:=e_{i_1}\otimes\cdots\otimes e_{i_n}\in (\R^d)^{\otimes n},\qquad e_I^*:=e_{i_1}^*\otimes\cdots\otimes e_{i_n}^*\in ((\R^d)^*)^{\otimes n}.
$$
For the empty multi-index $\varnothing$, we set $|\varnothing|=0$ and $e_\varnothing:=1$. Then
$$
\langle e_I^*,e_J\rangle={\1}_{\{I=J\}},
$$
and for $\mathbf{a}\in T((\R^d))$, the scalar $\langle e_I^*,\mathbf{a}\rangle$ is called the $I$-th tensor coordinate of $\mathbf{a}$. For two multi-indices $I=(i_1,\dots,i_{|I|})$ and $J=(j_1,\dots,j_{|J|})$, the \emph{shuffle product} is recursively defined by
$$
e_I^*\shuffle e_\varnothing^*=e_\varnothing^*\shuffle e_I^*=e_I^*,
$$
and
$$
e_I^*\shuffle e_J^*:=(e_{I'}^*\shuffle e_J^*)\otimes e_{i_{|I|}}^*+(e_I^*\shuffle e_{J'}^*)\otimes e_{j_{|J|}}^*,
$$
where $I'=(i_1,\dots,i_{|I|-1})$ and $J'=(j_1,\dots,j_{|J|-1})$. Equivalently, $e_I^*\shuffle e_J^*$ is the sum of all words obtained by interleaving the letters of $I$ and $J$ while preserving the order within each word.

We define
$$
G(\R^d):=\left\{\mathbf{a}\in T((\R^d))\setminus\{\mathbf{0}\}:\;\langle l_1\shuffle l_2,\mathbf{a}\rangle=\langle l_1,\mathbf{a}\rangle \langle l_2,\mathbf{a}\rangle\text{ for all } l_1,l_2\in T((\R^d)^*)\right\},
$$
and refer to $G(\R^d)$ as the set of \emph{group-like elements}.
The importance of group-like elements stems from the fact that the signature $S(x)_{s,t}$ of any bounded-variation path
$x$ is group-like; this will be confirmed below as a consequence of Chen's identity.
In particular, if $\mathbf{g}\in G(\R^d)$, then $\pi_0(\mathbf{g})=1.$
It is well known that $G(\R^d)$ forms a group under the tensor product $\otimes$, with identity $\mathbf{1}$ and inverse
$$
\mathbf{g}^{-1}=\sum_{n\ge0}(\mathbf{1}-\mathbf{g})^{\otimes n}.
$$
The group structure of $G(\mathbb{R}^d)$ is not merely algebraic: it encodes the concatenation of paths at the level of their iterated
integrals, as made precise by Chen's identity below.
We also set
$$
G^N(\R^d):=\pi_{\le N}\bigl(G(\R^d)\bigr).
$$
Then $G^N(\R^d)$ is the \emph{free nilpotent group of step $N$}, with group operation given by the tensor product followed by truncation. Moreover, $G^N\left(\mathbb{R}^d\right)=\exp\left(\mathfrak{g}^N\left(\mathbb{R}^d\right)\right)$, see \cite[Sec 7.5]{friz-victoir}.

\section{Randomized optimal stopping policy}\label{sec:rd-stop}
Observe that the process $Y$ defined in Section~\ref{sec:obj} is right continuous and $\E[\|Y\|_{\infty}]<\infty$ for any finite interval. In order to provide an algorithm to detect the change-point $\theta$ we need to solve the optimal stopping problem $\inf_{\tau\in\cs} \E[Y_{\tau \wedge T}]$. Let us first introduce a lemma from \cite{Bayer2023}.

\begin{lemma}\label{lem:ex_theta_stop}
	Let $\widehat{\X}$ be a stochastic process in $\hat{\Omega}_T^p$ and set $\cf^X_t \coloneqq \sigma(\widehat{\X}_{0,s}\, :\, 0 \leq s \leq t) = \sigma(\widehat{\X}|_{[0,t]})$. Let $\tau$ be a stopping time with respect to $(\cf^X_t)$. Then there is a Borel measurable map $\phi \colon \Lambda_T \to \{0,1\}$ such that
	\begin{align*}
		\phi({\widehat{\X}}(\omega)|_{[0,t]}) = \mathbbm{1}_{\{ \tau(\omega) \leq t \}}
	\end{align*}
	for every $\omega \in\bar{\Omega}$.
\end{lemma}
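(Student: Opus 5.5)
The idea is the standard Doob-type reduction of a stopping time to a path functional, as in \cite{Bayer2023}. I would define $\phi$ on $\Lambda_T$ as the indicator of a suitable Borel set of stopped paths. Concretely, set
\[
\phi(\widehat{\X}|_{[0,t]}) := \1_{\{\tau\le t\}}\ \text{evaluated via a path functional,}
\]
which I construct in three steps.

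\textbf{Step 1: each event $\{\tau\le t\}$ is a path functional.} Since $\{\tau\le t\}\in\cf^X_t=\sigma(\widehat{\X}|_{[0,t]})$, the Doob--Dynkin lemma gives, for each fixed $t$, a Borel map $\phi_t:\hat\Omega^p_t\to\{0,1\}$ with
\[
\phi_t(\widehat{\X}(\omega)|_{[0,t]})=\1_{\{\tau(\omega)\le t\}}.
\]
Here $\hat\Omega^p_t$ is Polish, being the closure of lifts in the $p$-variation metric; this justifies treating $\widehat{\X}|_{[0,t]}$ as a random element of a standard Borel space and using the Borel $\sigma$-algebra there.

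\textbf{Step 2: glue over $t$.} Set $\phi(\mathbf y):=\phi_t(\mathbf y)$ for $\mathbf y\in\hat\Omega^p_t$. The difficulty is that $\Lambda_T=\bigcup_t\hat\Omega^p_t$ is an uncountable union, so measurability of the glued map is not automatic. I would first endow $\Lambda_T$ with its natural topology, for instance by identifying $\mathbf y\in\hat\Omega^p_t$ with the pair $(t,\tilde{\mathbf y})$, where $\tilde{\mathbf y}$ is $\mathbf y$ extended constantly to $[0,T]$. The time coordinate is recoverable from $\mathbf y$ itself, since the first component of the time-augmented path at its endpoint equals $t$. This embeds $\Lambda_T$ into $[0,T]\times\hat\Omega^p_T$.

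To obtain joint measurability, I would avoid choosing $\phi_t$ independently for each $t$. Instead I would use the progressive measurability of $(t,\omega)\mapsto\1_{\{\tau\le t\}}$: it is right-continuous and adapted, hence progressive. The key point is that the progressive $\sigma$-algebra of the filtration generated by a continuous process $\widehat{\X}$ coincides with the $\sigma$-algebra generated by the map $(t,\omega)\mapsto(t,\widehat{\X}(\omega)|_{[0,t]})$. I would prove this by a monotone class argument. Generators of progressive sets of the form $[s,T]\times A$ with $A\in\cf^X_s$ are pulled back from the sets $\{(t,\mathbf y): t\ge s,\ \mathbf y|_{[0,s]}\in B\}$, and the restriction map $\mathbf y\mapsto\mathbf y|_{[0,s]}$ is continuous on $\{t\ge s\}$. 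Doob--Dynkin on this jointly measurable map then yields a single Borel $\phi$ on $\Lambda_T$ with the required identity.

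\textbf{Step 3: upgrade to every $\omega$.} Since $\phi$ is composed with the path for every $\omega$ and there is no null-set modification, the identity holds pointwise. I would finally replace $\phi$ by $\1_{\{\phi\ge 1/2\}}$ to ensure it is $\{0,1\}$-valued.

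The main obstacle is Step 2: identifying the progressive $\sigma$-algebra with the one generated by stopped paths, and doing so with a genuinely Borel structure on $\Lambda_T$.
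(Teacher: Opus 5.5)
The paper gives no proof of this lemma; it quotes it from \cite{Bayer2023}. Judged on its own, your Step~2 has a genuine gap. Set $\Psi(t,\omega):=\widehat{\X}(\omega)|_{[0,t]}$. Your monotone class argument only shows that the rectangles $[s,T]\times A$, $A\in\mathcal{F}^X_s$, lie in $\sigma(\Psi)$. Nothing guarantees that these rectangles generate the progressive $\sigma$-algebra. For a general filtration they generate only a $\sigma$-algebra between the predictable and the optional one. For $(\mathcal{F}^X_t)$, where $\mathcal{F}^X_{s-}=\mathcal{F}^X_s$ because $\widehat{\X}$ is continuous, they generate exactly the predictable $\sigma$-algebra. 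Since $\Psi$ is continuous in $t$ and adapted, $\sigma(\Psi)$ is itself contained in the predictable $\sigma$-algebra, so your monotone class step proves only the easy inclusion. Treating the rectangles as generators of the progressive $\sigma$-algebra presupposes that every progressive set is predictable. In particular it presupposes that $\{(t,\omega):\tau(\omega)\le t\}$ is predictable for every $(\mathcal{F}^X_t)$-stopping time, which the lemma itself forces and which is its whole content. Concretely, approximating $\tau$ from above by dyadic stopping times $\tau_n\downarrow\tau$ only yields $\{(t,\omega):\tau(\omega)<t\}\in\sigma(\Psi)$. The graph $\{(t,\omega):\tau(\omega)=t\}$, an uncountable union of slices $\{t\}\times\{\tau=t\}$, is exactly the gluing problem you flagged at the start of Step~2, and it remains unresolved.

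Your identity is nevertheless true here, but it needs a different ingredient, for example Lusin's separation theorem, which uses that $\bar\Omega$ is Polish and $\tau$ is Borel. Map $\Lambda_T$ by constant extension into $[0,T]$ times the Polish space of geometric $p$-rough paths over $\R^{1+d}$, and let $\tilde\Psi(t,\omega):=(t,\widehat{\X}(\omega)|_{[0,t]})$, which is Borel. Define $S:=\tilde\Psi(\{(t,\omega):\tau(\omega)\le t\})$ and $S':=\tilde\Psi(\{(t,\omega):\tau(\omega)>t\})$. Both are analytic, being Borel images of Borel sets. They are disjoint: if $\widehat{\X}(\omega)|_{[0,t]}=\widehat{\X}(\omega')|_{[0,t]}$, then $\{\tau\le t\}\in\sigma(\widehat{\X}|_{[0,t]})$ places $\omega$ and $\omega'$ on the same side. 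A Borel set $B\supset S$ with $B\cap S'=\emptyset$ then gives $\phi:=\1_B$ (composed with the embedding), valid for every $\omega$. Applied to an arbitrary progressive set, the same argument proves that the progressive $\sigma$-algebra equals $\sigma(\Psi)$. With this in place of your monotone class step, the rest of your outline goes through.
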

The following results from \cite{Bayer2023} is essential for justifying our algorithm. However, in \cite{Bayer2023} the authors work with the objective stochastic process adapted to the filtration $(\cf^X_t)$ and continuous. These assumptions are not satisfied by the loss function $Y$ in our quickest detection problem. Therefore, we include the following results and brief proofs for completeness. Recall that $\mathcal{T} \coloneqq C(\Lambda_T,\R)$ and call it the space of \emph{continuous stopping policies}. 

\begin{definition}\label{def:rand-stopping-times}
	Let $Z$ be a non-negative random variable independent of $\widehat{\X}$ and such that $\PP_Z(Z = 0) = 0$. For a continuous stopping policy $\phi \in \mathcal{T}$, we define the \emph{randomized stopping time} by
	\begin{equation*}%\label{eqn:randomized_stopping_time}
		\tau^r_{\phi} \coloneqq \inf \left\{t \geq 0\,:\, \int_0^{t \wedge T} \phi (\widehat{\X}|_{[0,s]})^2\, ds \geq Z \right\}
	\end{equation*}
	where $\inf \emptyset = + \infty$.
\end{definition}

Next we prove that stopping times can be approximated by randomized stopping times based on continuous stopping policies.

\begin{proposition}\label{prop:opt_cont_pol}
	Let $Y$ be right continuous in $t$ and $\E[\|Y\|_{\infty}]<\infty$ for any finite interval. For every stopping time $\tau \in \mathcal{S}$, there exists a sequence $\phi_n \in \mathcal{T}$ such that the randomized stopping times $\tau^r_{\phi_n}$ satisfy $\tau_{\phi_n}^r \to \tau$ almost surely as $n \to \infty$. Furthermore,
	\begin{align*}
		\inf_{\phi \in \mathcal{T}} \E[{Y}_{\tau^r_{\phi} \wedge T}] = \inf_{\tau \in \mathcal{S}} \E[Y_{\tau \wedge T}].
	\end{align*}
\end{proposition}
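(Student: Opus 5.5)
We follow the strategy of \cite[Prop.~3.4]{Bayer2023}, adapted to a loss $Y$ that is neither adapted nor continuous. Fix $\tau\in\mathcal S$. By Lemma~\ref{lem:ex_theta_stop} there is a Borel map $\phi:\Lambda_T\to\{0,1\}$ with $\phi(\widehat{\X}|_{[0,t]})=\1_{\{\tau\le t\}}$.

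\textbf{Step 1: a randomized stopping time that exactly recovers $\tau$.} Consider the policy $\psi_n:=\sqrt n\,\phi$. For it,
\[
\int_0^{t\wedge T}\psi_n(\widehat{\X}|_{[0,s]})^2\,ds=n\,(t\wedge T-\tau)^+ .
\]
Hence the associated randomized time is $\tau+Z/n$ whenever this is $\le T$. Since $\PP(Z=0)=0$, this converges to $\tau$ almost surely, from the right, on $\{\tau<T\}$. On $\{\tau\ge T\}$ all the quantities are stopped at $T$ anyway, because only $\tau\wedge T$ enters.

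\textbf{Step 2: replace the measurable policy by a continuous one.} The policy $\psi_n$ is only measurable. Let $\mathbb Q$ be the image measure on $\Lambda_T$ of $(\widehat{\X},\mathrm{Leb}_{[0,T]})$ under $(\omega,s)\mapsto\widehat{\X}(\omega)|_{[0,s]}$. By Lusin's theorem (or density of $C(\Lambda_T)$ in $L^2(\mathbb Q)$), choose continuous $\phi_{n}$ with
\[
\E\int_0^T\bigl|\phi_n^2-\psi_n^2\bigr|(\widehat{\X}|_{[0,s]})\,ds<2^{-n}/n .
\]
Then, almost surely along the sequence (Borel--Cantelli), the integrated clocks
\[
\int_0^{\cdot}\phi_n^2\,ds\quad\text{and}\quad n(\cdot-\tau)^+
\]
differ uniformly on $[0,T]$ by a quantity tending to $0$. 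Because $n(\cdot-\tau)^+$ crosses any level $Z>0$ with slope $n\to\infty$, the hitting times of $Z$ satisfy $\tau^r_{\phi_n}\to\tau$ a.s.

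\textbf{The main obstacle.} The delicate point is ensuring that $\tau^r_{\phi_n}$ does not stop too early, before $\tau$. The clock $\int\phi_n^2$ may be small but positive before $\tau$, and $Z$ could be tiny. I would handle this by comparing the clocks: on $[0,\tau]$ the clock of $\phi_n$ is at most the $L^1$ error, which goes to $0$ a.s. Since $Z>0$ a.s., eventually this error is less than $Z$, so no early stop occurs. On the other side, a stop at a time no later than $\tau+(Z+\text{error})/n$ follows from the uniform closeness of the clocks. Convergence is therefore from the right, or eventually equal at $T$.

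\textbf{Step 3: the value identity.} Right continuity of $Y$ gives $Y_{\tau^r_{\phi_n}\wedge T}\to Y_{\tau\wedge T}$ a.s. This is exactly why right continuity suffices and continuity is not needed. The bound $|Y_{\cdot\wedge T}|\le\|Y\|_{\infty,[0,T]}$ is integrable, so dominated convergence yields $\E[Y_{\tau^r_{\phi_n}\wedge T}]\to\E[Y_{\tau\wedge T}]$, which gives "$\le$" in the identity. For "$\ge$", note that each $\tau^r_\phi$ is a stopping time for the filtration enlarged by the independent $Z$. Conditioning on $Z=z$ gives a genuine $(\cf^X_t)$-stopping time, whose cost is at least $\inf_{\mathcal S}$. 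Here independence of $Z$ from $(\X^{(1)},\X^{(2)},\theta)$ is what lets $Y$ be non-adapted. Integrating over $z$ gives $\E[Y_{\tau^r_\phi\wedge T}]\ge\inf_{\tau\in\mathcal S}\E[Y_{\tau\wedge T}]$.
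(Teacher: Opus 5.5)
Your proof is correct. Its skeleton matches the paper's: a Borel representation of $\tau$, continuous approximating policies, right continuity plus dominated convergence for ``$\le$'', and conditioning on $Z$ for ``$\ge$''. The key approximation step, however, is genuinely different.

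The paper applies Wi\'sniewski's theorem to get continuous $\tilde\phi_n\in[0,1]$ converging almost surely to $\1_{\{\tau\le t\}}$. It then amplifies them to $\phi_n=(2\tilde\phi_n)^n$, whose pointwise limit is $0$ before $\tau$ and $+\infty$ after.

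You instead first realize an exact measurable clock $n(t\wedge T-\tau)^+$ through $\psi_n=\sqrt n\,\phi$. Only then do you pass to continuous policies, using $L^1(\mathbb{Q})$-density of $C(\Lambda_T)$ with summable errors. Borel--Cantelli then gives $E_n:=\int_0^T|\phi_n^2-\psi_n^2|(\widehat{\X}|_{[0,s]})\,ds\to0$ a.s., and hence the explicit sandwich $\tau<\tau^r_{\phi_n}\le\tau+(Z+E_n)/n$ on $\{\tau<T\}$ for large $n$.

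The paper's route is shorter, but your route controls exactly the point you flagged as the main obstacle. In the paper, pointwise convergence $(2\tilde\phi_n)^n\to0$ on $[0,\tau)$ does not by itself give $\int_0^\tau\phi_n^2\,ds\to0$: $\phi_n$ is only bounded by $2^n$ and no domination is available. Your uniform comparison of the clocks settles this quantitatively. It needs only the standard density of continuous functions in $L^1$ of a finite Borel measure on a metric space, not an almost-sure approximation theorem.

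You are also more careful on two side issues:
\begin{itemize}
\item On $\{\tau\ge T\}$ you correctly obtain only $\tau^r_{\phi_n}\wedge T=\tau\wedge T$ eventually. So the convergence claim must be read for truncated times; this is all the value identity uses, and the paper's construction behaves the same way.
\item For the Fubini step you require $Z$ to be independent of $(\X^{(1)},\X^{(2)},\theta)$, not just of $\widehat{\X}$. This is what is actually needed, since $Y$ depends on $\theta$.
\end{itemize}
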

\begin{proof}
	From Lemma~\ref{lem:ex_theta_stop}, any stopping time $\tau\in \cs$ can be represented by a Borel measurable map $\phi(\widehat{\mathbb{X}}|_{[0, t]})=\1_{\{\tau \leq t\}}$. By \cite[Thm 1]{Winiewski1994} there exists a sequence of continuous functions $\tilde{\phi}_n\in\ct$ that satisfying $0\leq \tilde{\phi}_n\leq1$ and converge to $\1_{\{\tau \leq t\}}$ almost surely. By setting $\phi_n:=(2\tilde{\phi}_n)^n$, we construct a scaled sequence $\phi_n$ such that
	\begin{align*}
		\lim_{n \to \infty} \phi_n(\hat{\mathbb{X}}|_{[0,t]}) \to \begin{cases}
			+ \infty &\text{if } t \geq \tau \\
			0 &\text{if } t < \tau.
		\end{cases}
	\end{align*}
Moreover, let $A_t^n := \int_0^t \phi_n(\hat{\mathbb{X}}|_{[0,s]})^2 ds$.

For $t < \tau$, the integral $A_\tau^n = \int_0^\tau \phi_n^2 ds$ converges to 0. Since $Z$ is a strictly positive random variable, for sufficiently large $n$, we have $A_\tau^n < Z$ almost surely. Thus, $\tau_n \ge \tau$ almost surely.

For $t > \tau$, the sequence $\phi_n$ diverges to infinity on this interval $(\tau, t]$. Consequently, $A_t^n \to \infty$ for any $t > \tau$. This ensures that the threshold $Z$ is crossed immediately after $\tau$.

Thus, $\tau_{\phi_n}^r \rightarrow \tau$ almost surely from the right as $n \to \infty$. By dominated convergence theorem and the right continuity of $Y$ we have 
	\begin{equation*}
		\lim _{n \rightarrow \infty} \mathbb{E}\left[Y_{\tau_{\phi_n}^r \wedge T}\right]=\mathbb{E}\left[Y_{\tau \wedge T}\right].
		\end{equation*}
	Therefore 
	\begin{equation}\label{eq:phi>tau}
		\inf _{\phi \in \mathcal{T}} \mathbb{E}\left[Y_{\tau_\phi^r \wedge T}\right] \leq \inf_{\tau \in \mathcal{S}} \mathbb{E}\left[Y_{\tau \wedge T}\right] .
	\end{equation}
For any realization $z$ of the random variable $Z$, we denote
\begin{equation*}
	\tau_z:=\inf \left\{t \geq 0: \int_0^{t \wedge T} \phi(\widehat{\mathbb{X}}|_{[0, s]})^2 d s \geq z\right\}.
\end{equation*}
Since $Z$ is independent of $\X$, by Fubini's theorem we have for any $\phi\in\ct$
\begin{equation}\label{eq:phi<tau}
	\mathbb{E}\left[Y_{\tau_\phi^r \wedge T}\right]=\mathbb{E}\left[\E[Y_{\tau_\phi^r \wedge T} \mid \widehat{\X}]\right]=\int_0^{\infty} \mathbb{E}\left[Y_{\tau_z \wedge T}\right] \mathbb{P}_Z(d z) \geq \inf _{\tau \in \mathcal{S}} \mathbb{E}\left[Y_{\tau \wedge T}\right]
\end{equation}
Combining \eqref{eq:phi>tau} and \eqref{eq:phi<tau} we get the desired result.
	\end{proof}

\begin{proposition}\label{lem:regularized_value}
	Let $Y$ be right continuous in $t$ and $\E[\|Y\|_{\infty}]<\infty$ for any finite interval. Let $S$ be an $(\mathcal{F}_t)$-stopping time and let $F_Z$ denote the cumulative distribution function of $Z$. Then
	\begin{align*}
		\E[{Y}_{\tau_{\phi}^r \wedge S} ] =\E\left[ \int_0^S{Y}_{t}\, d \tilde{F}(t) + {Y}_S (1 - \tilde{F}(S))\right] = \E\left[\int_0^S (1 - \tilde{F}(t))\, d{Y}_t + {Y}_0\right],
	\end{align*}
	where the second integral is implicitly defined by integration by parts and
	\begin{align*}
		\tilde{F}(t) \coloneqq F_Z \left( \int_0^{t} \phi (\widehat{\X}|_{[0,s]})^2 \, ds \right).
	\end{align*}
	In particular, if $Z$ has a density $\varrho$,
	\begin{align*}
		\E[{Y}_{\tau_{\phi}^r \wedge S}] = \E \left[  \int_0^S {Y}_{t} \phi (\widehat{\X}|_{[0,t]})^2 \varrho \left( \int_0^{t} \phi (\widehat{\X}|_{[0,s]})^2 \, ds \right)\, dt + {Y}_S (1 - \tilde{F}(S)) \right].
	\end{align*}
\end{proposition}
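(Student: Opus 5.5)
The plan is to condition on $\widehat{\X}$, using the independence of $Z$ from $\widehat{\X}$ and from $\theta$, and to reduce the claim to a pathwise identity for a Lebesgue--Stieltjes integral. Set $A_t:=\int_0^t\phi(\widehat{\X}|_{[0,s]})^2\,ds$. This is continuous and nondecreasing in $t$, and by Definition~\ref{def:rand-stopping-times} we have $\tau^r_\phi=\inf\{t:A_{t\wedge T}\ge Z\}$. For any $t$ (below $T$), continuity and monotonicity of $A$ give $\{\tau^r_\phi\le t\}=\{Z\le A_t\}$. Conditioning on $\mathcal G:=\sigma(\widehat{\X},\X^{(1)},\X^{(2)},\theta)$, which determines $Y$, $S$ and $A$ but is independent of $Z$, we get $\PP(\tau^r_\phi\le t\mid\mathcal G)=F_Z(A_t)=\tilde F(t)$. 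Hence, conditionally on $\mathcal G$, the law of $\tau^r_\phi$ on $[0,\infty)$ is the Stieltjes measure $d\tilde F$, with the remaining mass $1-\tilde F(\infty)$ sitting at $+\infty$.

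Next, write $Y_{\tau^r_\phi\wedge S}=Y_{\tau^r_\phi}\1_{\{\tau^r_\phi\le S\}}+Y_S\1_{\{\tau^r_\phi>S\}}$. The ambiguity about the endpoint in the first term is harmless: $\tilde F$ is continuous whenever $F_Z$ is, and in general it suffices to read $\int_0^S$ over $[0,S]$. Taking conditional expectation given $\mathcal G$, with $Y$ and $S$ frozen, gives
\[
\E[Y_{\tau^r_\phi\wedge S}\mid\mathcal G]=\int_{[0,S]}Y_t\,d\tilde F(t)+Y_S(1-\tilde F(S)).
\]
Taking expectations then yields the first equality. Integrability comes from $|Y_{\tau^r_\phi\wedge S}|\le\|Y\|_\infty$, assuming $S\le T$ or more generally a finite horizon, so that $\E\|Y\|_\infty<\infty$ applies. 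Measurability of $(t,\omega)\mapsto Y_t$ follows from right continuity, and this is what justifies Fubini.

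For the second equality, apply pathwise integration by parts for right-continuous functions of finite variation: $\int_0^S Y_t\,d\tilde F(t)=Y_S\tilde F(S)-Y_0\tilde F(0)-\int_0^S\tilde F(t)\,dY_t$, using $\tilde F(0)=F_Z(0)=0$ since $\PP(Z=0)=0$. Substituting gives $Y_0+\int_0^S(1-\tilde F)\,dY_t$. The statement explicitly says this integral is defined implicitly by integration by parts, so no regularity of $Y$ beyond right continuity is needed. If $Z$ has density $\varrho$, the chain rule for absolutely continuous functions gives $d\tilde F(t)=\varrho(A_t)\phi(\widehat{\X}|_{[0,t]})^2\,dt$, which is the final formula.

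The main obstacle is the conditioning step. $Y$ depends on the latent $\theta$ and is not $\cf^X$-adapted, and $S$ is only an $(\mathcal F_t)$-stopping time. One therefore has to make sure that $Z$ is independent of the whole $\mathcal G$, not merely of $\widehat{\X}$. I would state this as part of the setup, realizing $Z$ on an extra product factor of $\bar\Omega$, and then check that the identity $\{\tau^r_\phi\le t\}=\{Z\le A_t\}$ holds with the correct treatment of equality, including the truncation at $T$.
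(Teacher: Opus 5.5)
Your proposal is correct and follows essentially the same route as the paper: identify $\tilde F$ as the conditional distribution function of $\tau^r_\phi$ using the independence of $Z$, split $Y_{\tau^r_\phi\wedge S}$ at $S$, and take expectations via Fubini. Conditioning on $\mathcal G=\sigma(\widehat{\X},\X^{(1)},\X^{(2)},\theta)$, with $Z$ independent of all of $\mathcal G$, is a worthwhile tightening of the paper's conditioning on $\widehat{\X}$ alone, since the paper's passage from $\int_0^T\E[Y_{t\wedge S}\mid\widehat{\X}]\,d\tilde F(t)$ to an integral up to $S$ tacitly treats $S$ as $\sigma(\widehat{\X})$-measurable; you also supply the integration-by-parts and density computations that the paper leaves implicit.
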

\begin{proof}
	Since $Z$ is independent of $\X$, for $\phi\in\ct$ we have 
	\begin{equation}\label{eq:Ft}
		\mathbb{P}\left(\tau_\phi^r \leq t \mid \widehat{\mathbb{X}}\right)=\mathbb{P}\left(\int_0^{t \wedge T} \phi(\widehat{\mathbb{X}}|_{[0, s]})^2 d s \geq Z \mid \widehat{\mathbb{X}}\right)=\widetilde{F}(t),
	\end{equation}
	and
\begin{equation}\label{eq:1-Ft}
	\mathbb{P}\left(\tau_\phi^r=\infty \mid \widehat{\mathbb{X}}\right)=\mathbb{P}\left(\int_0^T \phi(\widehat{\mathbb{X}}|_{[0, s]})^2 d s<Z \mid \widehat{\mathbb{X}}\right)=1-\widetilde{F}(T).
	\end{equation}
Therefore, by \eqref{eq:Ft}-\eqref{eq:1-Ft} we have
\begin{align*}
	\mathbb{E}[Y_{\tau_\phi^r \wedge S} \mid \widehat{\mathbb{X}}]&=\int_0^T E[Y_{t \wedge S}\mid \widehat{\X}] d \widetilde{F}(t)+\E[Y_S\mid\widehat{\X}](1-\widetilde{F}(T))\\
	&=\int_0^S \E[Y_t\mid\widehat{\X}] d \widetilde{F}(t)+\E[Y_S\mid\widehat{\X}](1-\widetilde{F}(S))
\end{align*}
Taking expectation on both sides and applying Fubini's theorem we get the desired result.
\end{proof}
Using the regularization by randomization we will prove that it is enough to use stopping policies that are linear functionals of the signature.
Recall that $\mathcal{T}_{\mathrm{sig}} \subset \mathcal{T}$ is defined as
\begin{align*}
	\mathcal{T}_{\mathrm{sig}} = \left\{ \phi \in \mathcal{T}\, :\, \exists l \in T((\R^{1+d})^*) \text{ such that } \phi(\widehat{\X}|_{[0,t]}) = \langle l,\widehat{\X}^{< \infty}_{0,t} \rangle \ \forall \widehat{\X}|_{[0,t]} \in \Lambda_T \right\}, 
\end{align*}
and we call it the space of \emph{linear signature stopping policies}. 
\begin{definition}\label{def:linear-sig-pol}
	Let $Z$ be as in Definition~\ref{def:rand-stopping-times}, then we define the following notation for \emph{randomized stopping times associated to linear signature stopping policies} 
	\begin{equation*}%\label{eqn:randomized_sig_stopping_time}
		\tau^r_{l} \coloneqq \tau^r_{\phi_l} = \inf \left\{t \geq 0\,:\, \int_0^{t \wedge T} \langle l, \widehat{\X}^{<\infty}_{0,s}\rangle^2 \, ds \geq Z \right\}.
	\end{equation*}
\end{definition}

The main result of this section is the following
\begin{proposition}\label{prop:opt_sig_pol}
	Let $Y$ be right continuous in $t$ and $\E[\|Y\|_{\infty}]<\infty$ for any finite interval. Suppose $Z$ has a continuous density $\varrho$. Then
	\begin{align*}
		\inf_{\phi \in \mathcal{T}} \E[{Y}_{\tau_{\phi}^r \wedge T}] = \inf_{\phi \in \mathcal{T}_{\mathrm{sig}}} \E[{Y}_{\tau_{\phi}^r \wedge T}].
	\end{align*}
	It follows that
	\begin{align*}
		\inf_{\phi \in \mathcal{T}} \E[{Y}_{\tau_{\phi}^r \wedge T}] = \inf_{l \in T((\R^{1 + d})^*)} \E[{Y}_{\tau_l^r \wedge T}].
	\end{align*}
\end{proposition}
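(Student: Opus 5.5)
Since $\mathcal{T}_{\mathrm{sig}}\subset\mathcal{T}$, the inequality $\inf_{\mathcal{T}}\le\inf_{\mathcal{T}_{\mathrm{sig}}}$ is trivial. The work is the reverse inequality, which I would obtain by a density-plus-continuity argument following \cite{Bayer2023}. Fix $\phi\in\mathcal{T}$ and $\eta>0$. The aim is to find $l\in T((\R^{1+d})^*)$ with
\[
\E[Y_{\tau^r_{l}\wedge T}]\le \E[Y_{\tau^r_\phi\wedge T}]+\eta .
\]
The second displayed identity is then immediate, since by the Remark every element of $\mathcal{T}_{\mathrm{sig}}$ is some $\phi_l$, so $\tau^r_{\phi_l}=\tau^r_l$.

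\textbf{Step 1 (representation).} Apply Proposition~\ref{lem:regularized_value} with $S=T$. This writes
\[
\E[Y_{\tau^r_\psi\wedge T}]=\E\bigl[G(\psi)\bigr],\qquad G(\psi):=\int_0^T Y_t\,\psi(\widehat{\X}|_{[0,t]})^2\varrho\bigl(A^\psi_t\bigr)\,dt+Y_T\bigl(1-F_Z(A^\psi_T)\bigr),
\]
with $A^\psi_t:=\int_0^t\psi(\widehat{\X}|_{[0,s]})^2ds$. Equivalently, $G(\psi)=\int_0^T Y_t\,d\tilde F_\psi(t)+Y_T(1-\tilde F_\psi(T))$. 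Here $Y$ is the loss process, which need not be $\cf^X$-adapted or continuous. This is harmless: the formula only uses independence of $Z$ from $(\widehat{\X},Y)$, and I would make sure $Z$ is taken independent of $\theta$ as well.

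\textbf{Step 2 (approximation on a compact).} Given $\varepsilon>0$, apply Lemma~\ref{lem:sig_dense} to the law of $\widehat{\X}$. This gives a compact $\mathcal{K}$ with $\PP(\widehat{\X}\in\mathcal{K})>1-\varepsilon$ and $\phi_n\in\mathcal{T}_{\mathrm{sig}}$ with $\phi_n\to\phi$ uniformly on $\mathcal{K}\times[0,T]$. Since $\phi$ is continuous on the compact image of $\mathcal{K}\times[0,T]$ in $\Lambda_T$, it is bounded there by some $C$. Hence $\phi_n^2\to\phi^2$ uniformly, and so $A^{\phi_n}_t\to A^\phi_t$ uniformly in $t$, with all values in $[0,(C+1)^2T]$. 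Continuity of $\varrho$ (uniform on this compact interval) and of $F_Z$ then give $d\tilde F_{\phi_n}\to d\tilde F_\phi$ in total variation on $[0,T]$, uniformly over $\mathcal{K}$. It follows that
\[
|G(\phi_n)-G(\phi)|\le 2\|Y\|_{\infty,[0,T]}\,\delta_n
\]
on $\{\widehat{\X}\in\mathcal{K}\}$, with deterministic $\delta_n\to0$.

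\textbf{Step 3 (off the compact), the main obstacle.} On $\{\widehat{\X}\notin\mathcal{K}\}$ there is no control of $\phi_n$. However, for any $\psi$ the measure $d\tilde F_\psi$ has total mass at most $1$, so $|G(\psi)|\le 2\|Y\|_{\infty,[0,T]}$ regardless of $\psi$. Therefore
\[
\E\bigl[|G(\phi_n)-G(\phi)|;\widehat{\X}\notin\mathcal{K}\bigr]\le 4\,\E\bigl[\|Y\|_\infty;\widehat{\X}\notin\mathcal{K}\bigr].
\]
This tends to $0$ as $\varepsilon\to0$ by the integrability of $\|Y\|_\infty$ and absolute continuity of the integral. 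Combining Steps 2 and 3, choose $\varepsilon$ first and then $n$ large to get $\E[G(\phi_n)]\le\E[G(\phi)]+\eta$.

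The delicate point is that uniform bound independent of the approximant, together with measurability of $(\omega,t)\mapsto\phi_n(\widehat{\X}|_{[0,t]})$. The latter follows from continuity of $\phi_n$ on $\Lambda_T$ and continuity of $t\mapsto\widehat{\X}|_{[0,t]}$. Taking the infimum over $\phi$ finishes the proof.
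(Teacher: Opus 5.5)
Your proposal is correct and follows essentially the same route as the paper: the representation from Proposition~\ref{lem:regularized_value}, uniform approximation on a high-probability compact set via Lemma~\ref{lem:sig_dense} combined with continuity of $F_Z$ and $\varrho$, and a tail bound off the compact set using the integrability of $\|Y\|_\infty$. You also make explicit several points the paper leaves implicit. These are the uniform bound $|G(\psi)|\le 2\|Y\|_{\infty,[0,T]}$, valid because $d\tilde F_\psi$ has mass at most one, which handles the integral term on $A^c$ that the paper covers only ``by a similar argument''; the boundedness of $\phi$ on the compact set; and the requirement that $Z$ be independent of $\theta$ as well as of $\widehat{\X}$.
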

	\begin{proof}
Since $\mathcal{T}_{sig} \subset \mathcal{T}$, it suffices to show that $\inf_{\phi \in \mathcal{T}} \E[{Y}_{\tau_{\phi}^r \wedge T}] \geq \inf_{\phi \in \mathcal{T}_{\mathrm{sig}}} \E[{Y}_{\tau_{\phi}^r \wedge T}]$.
		
Let $\phi \in \mathcal{T}$. By Lemma~\ref{lem:sig_dense}, for any $\ep > 0$, there exists a compact set $\mathcal{K} \subset \hat{\Omega}_T^p$ with $\mathbb{P}(\mathcal{K}) \ge 1 - \ep$ and a sequence $\{\phi_n\} \subset \mathcal{T}_{sig}$ such that:
	\begin{equation*}
			\lim_{n \to \infty} \sup_{\hat{\mathbb{X}} \in \mathcal{K}, t \in [0, T]} |\phi_n(\hat{\mathbb{X}}|_{[0,t]}) - \phi(\hat{\mathbb{X}}|_{[0,t]})| = 0.
	\end{equation*}
Define $\tilde{F}_n(t) := F_Z(\int_0^t \phi_n(\hat{\mathbb{X}}|_{[0,s]})^2 ds)$ and $\tilde{F}(t):= F_Z(\int_0^t \phi(\hat{\mathbb{X}}|_{[0,s]})^2 ds)$. We decompose the difference in expected payoffs over the set $A = \{\hat{\mathbb{X}} \in \mathcal{K}\}$ and its complement $A^c$

\emph{Convergence on $A$:}
Since $\phi_n \to \phi$ uniformly on $\mathcal{K}$, the integrated squared policies 
$$ \int_0^t \phi_n(\hat{\mathbb{X}}|_{[0,s]})^2 ds\rightarrow\int_0^t \phi(\hat{\mathbb{X}}|_{[0,s]})^2 ds,$$ uniformly on $\mathcal{K}$. Given that $F_Z$ is uniformly continuous on compact sets, it follows that:
\begin{equation}\label{eq:A}
			\lim_{n \to \infty} \mathbb{E}[|Y_T| |\tilde{F}_n(T) - \tilde{F}(T)|; A] = 0.
\end{equation}

\emph{Bound on $A^c$:}
On the complement set, by the integrability of $Y$ the following is bounded
\begin{equation}\label{eq:A^c}
			\left| \mathbb{E}[Y_T(1-\tilde{F}_n(T)); A^c] - \mathbb{E}[Y_T(1-\tilde{F}(T)); A^c] \right| \le 2 \mathbb{E}[|Y_T|; A^c].
\end{equation}
Recall that $\E[ \lVert Y \rVert_{\infty}] < \infty$ on any bounded interval. Therefore, the right hand side of \eqref{eq:A^c} can be made arbitrarily small by the choice of $\mathcal{K}$ since $\mathbb{P}(A^c) \le \ep$.
		
Combining \eqref{eq:A}-\eqref{eq:A^c} we have $\lim_{n \to \infty} \mathbb{E}[Y_T(1-\tilde{F}_n(T))] = \mathbb{E}[Y_T(1-\tilde{F}(T))]$. By a similar argument, using the continuity of $\varrho$, we also have
\begin{equation*}
\lim_{n \to \infty}	\E \left[  \int_0^T {Y}_{t} \phi_n (\widehat{\X}|_{[0,t]})^2 \varrho \left( \int_0^{t} \phi_n (\widehat{\X}|_{[0,s]})^2 \, ds \right)\, dt\right]=\E \left[  \int_0^T {Y}_{t} \phi(\widehat{\X}|_{[0,t]})^2 \varrho \left( \int_0^{t} \phi (\widehat{\X}|_{[0,s]})^2 \, ds \right)\, dt\right]
\end{equation*}
Therefore, by Proposition~\ref{lem:regularized_value} we complete the proof. 
\end{proof}
We are ready to prove Proposition~\ref{thm:signature-stopping} now.
\begin{proof}[{\bf Proof of Proposition~\ref{thm:signature-stopping}}]
	By Proposition~\ref{prop:opt_cont_pol} and \ref{prop:opt_sig_pol}, it suffices to show that
	\begin{equation*}
		\inf_{l \in T((\R^{1+d})^*) } \E[Y_{\tau_{l}^r \wedge T}] \geq \inf_{l \in T((\R^{1 + d})^*) } \E[Y_{\tau_l \wedge T}].
	\end{equation*}
	For any $l \in T((\R^{1 + d})^*)$ and a chosen random variable $Z$ for the randomize stopping time $\tau_l^r$, the expected payoff can be expressed as:
	\begin{equation*}
		\E[ {Y}_{\tau_{l}^r \wedge T}] =\int_0^{\infty}  \E[{Y}_{\tau_z \wedge T}]\, \PP_Z(dz),
	\end{equation*}
	where $\tau_z \coloneqq \inf \{t \geq 0\,:\, \int_0^{t \wedge T} \langle l, \hat{\mathbb{X}}_{0,s}^{<\infty} \rangle ^2\, ds \geq z \} $. Using the shuffle product property for group like elements, for any fixed $z$, $\tau_z$ is equivalent to a deterministic signature hitting time, more precisely:
	\begin{equation*}
		\tau_z= \inf \left\{t \in [0,T] \,:\, \langle (l \shuffle l)1/z, \hat{\mathbb{X}}_{0,t}^{<\infty} \rangle \geq 1 \right\}.
	\end{equation*}
	Choosing $Z$ to be non-negative random variable we have
	$$
	\tau_z=\inf \left\{t \in [0,T] \,:\, |\langle l_z, \hat{\mathbb{X}}_{0,t}^{<\infty} \rangle| \geq 1 \right\},
	$$
	where $l_z=(l \shuffle l)1/z\in T((\R^{1 + d})^*)$. Since the expected payoff of the randomized time is a weighted average of payoffs from deterministic signature hitting times, it is bounded by their infimum:
	\begin{equation*}
		\E[ {Y}_{\tau_{l}^r \wedge T}]  \geq \inf_{p \in T((\R^d)^*) } \E[Y_{\tau_p \wedge T}],
	\end{equation*}
	which gives us the desired result.
\end{proof}
\end{appendix}

\end{document}